\newcommand{\GL}{{G^\mathcal{L}}}
\newcommand{\GR}{{G^\mathcal{R}}}
\newcommand{\N}{\mathcal{N}}
\renewcommand{\P}{\mathcal{P}}
\renewcommand{\L}{\mathcal{L}}
\newcommand{\R}{\mathcal{R}}
\newcommand{\cgstar}{\mathord\ast}
\newcommand{\cgup}{\mathord\uparrow}
\newcommand{\cgupstar}{\cgup\cgstar}
\newcommand{\cgdoubleup}{\mathord\Uparrow}
\newcommand{\cgallstar}{\mathord\circledast}
\newcommand{\os}{\mathbin:}
\newcommand{\cggfuz}{\mathrel{\rule[-.05ex]{.05ex}{1.3ex}\hspace{.8pt}\rhd}}
\newcommand{\cglfuz}{\mathrel{\lhd\hspace{.8pt}\rule[-.05ex]{.05ex}{1.3ex}}}
\newcommand{\cgfuzzy}{\mathrel\|}
\newcommand{\rank}{{\rm rank}}
\renewcommand{\o}{\mathrm{o}}
\newcommand{\so}{\hat{\mathrm{o}}}
\newcommand{\RO}{\mbox{RO}}
\newcommand{\LO}{\mbox{LO}}
\newcommand{\mex}{\mathrm{mex}}
\newcommand{\moon}{\scalebox{1.1}{\leftmoon}}
\newcommand{\moonsmall}{\scalebox{0.9}{\leftmoon}}
\newcommand{\nym}{%
  \infty\hspace{-0.52em}\raisebox{-0.2ex}{\rule{0.1ex}{1.4ex}}\,\,\,%
}
\newtheorem{theorem}{Theorem}[section] 
\theoremstyle{definition}
\newtheorem{definition}[theorem]{Definition}
\theoremstyle{remark}
\newtheorem*{claim*}{Claim}
\newtheorem{observation}[theorem]{Observation}
\newenvironment{breakablealgorithm}  
  {
   \begin{center}
     \refstepcounter{algorithm}
     \hrule height.8pt depth0pt \kern2pt
     \renewcommand{\caption}[2][\relax]{
       {\raggedright\textbf{\fname@algorithm~\thealgorithm} ##2\par}%
       \ifx\relax##1\relax 
         \addcontentsline{loa}{algorithm}{\protect\numberline{\thealgorithm}##2}%
       \else 
         \addcontentsline{loa}{algorithm}{\protect\numberline{\thealgorithm}##1}%
       \fi
       \kern2pt\hrule\kern2pt
     }
  }{
     \kern2pt\hrule\relax
   \end{center}
  }
\tikzset{
  midway/.style={
    -{}, 
    postaction={
      decorate,
      decoration={
        markings,
        mark=at position 0.5 with {\arrow{stealth}} 
      }
    }
  }
}
\journal{arXiv}
\begin{document}

\begin{frontmatter}

\title{Cyclic impartial games with carry-on moves}

\author{Tomoaki Abuku\fnref{fn1}}
\fntext[fn1]{Funded by JSPS KAKENHI Grant Number JP22K13953.}
\ead{buku3416@gmail.com }
\affiliation{organization={Gifu University},
            country={Japan}}

\author{Alda Carvalho\fnref{fn2}}
\fntext[fn2]{Funded by national funds through the FCT, I.P., under the scope of the project UID/06522/2025 (CEMAPRE/ISEG Research).}
\ead{alda.carvalho@uab.pt}
\affiliation{organization={DCeT, ABERTA University \& CEMAPRE/ISEG Research, ULISBOA},
country={Portugal}}

\author{Urban Larsson}
\ead{larsson@iitb.ac.in}
\affiliation{organization={Indian Institute of Technology Bombay},
            country={India}}

\author{Richard J. Nowakowski}
\ead{r.nowakowski@dal.ca}
\affiliation{organization={Dalhousie University}, country={Canada}}

\author{\texorpdfstring{Carlos P. Santos\corref{correspondingauthor}}{Carlos P.
Santos}}
\cortext[correspondingauthor]{Corresponding author. Funded by national funds through the FCT, I.P., under the scope of the projects
UID/297/2025 and UID/PRR/297/2025 (Center for Mathematics and Applications).}
\ead{Pereira.Santos@iscte-iul.pt}
\affiliation{organization={ISCTE-University Institute of Lisbon \& NovaMath, FCT NOVA}, country={Portugal}}

\author{Koki Suetsugu}
\ead{suetsugu.koki@gmail.com}
\affiliation{organization={Waseda University \& Osaka Metropolitan University \& Toyo University},
            country={Japan}}

\begin{abstract}
In an impartial combinatorial game, both players have the same options in the game and all its subpositions. The classical Sprague-Grundy Theory was developed for short impartial games, where players have a finite number of options, there are no special moves, and an infinite run is not possible. Subsequently, many generalizations have been proposed, particularly the Smith-Frankel-Perl Theory devised for games where the infinite run is possible, and the Larsson-Nowakowski-Santos Theory able to deal with entailing moves that disrupt the logic of the disjunctive sum. This work presents a generalization that combines these two theories, suitable for analyzing cyclic impartial games with carry-on moves, which are particular cases of entailing moves where the entailed player has no freedom of choice in their response. This generalization is illustrated with {\sc green-lime hackenbush}, a game inspired by the classic {\sc green hackenbush}.
\end{abstract}

\begin{keyword}
  Combinatorial Game Theory, entailing moves, games with cycles, impartial games, Sprague-Grundy Theory

  \MSC[2020] 05C57, 91A46
\end{keyword}

\end{frontmatter}

\section{Introduction}
\label{sec:intro}
We assume that the reader is acquainted with the fundamental concepts of
two-player combinatorial games as presented in any of
\cite{ANW019,BCG001,Con001,Sie013}. Here, we are exclusively interested in \emph{impartial games} where the allowable moves depend only on the position and not on which player is making the move -- both players have the same options in all followers of the game. Using the underlying game digraph, an approach used in this paper, an impartial game can be seen as a game played with pieces occupying the nodes, and the moves consist of choosing a piece and playing it to a node neighbor. In this document, only \emph{finite digraphs with a finite number of pieces} are considered, meaning that a player can only have a finite number of options. Furthermore, the winning convention is \emph{normal play}, where the player who is deprived of moves on their turn loses.

Over time, some mathematical theories have been proposed to analyze impartial games, depending on the characteristics of the game digraphs. When the moves have no special properties and a position cannot be a follower of itself, meaning the game digraph has no cycles, Sprague-Grundy Theory (SGT) should be used \citep{Bou02,Spra35,Gru39}; this was the first theory to be proposed.
Later, \cite{Smi66,Fra73} generalized SGT to encompass cyclic impartial games, where a position can be a follower of itself, i.e., the game digraph can have cycles\footnote{In the specialized literature, the designation ``loopy impartial games'' is prevalent. However, because we believe that the word ``cyclic'' is considerably more precise, we have taken the liberty of using it here.} (SFPT). On the other hand, even when considering game digraphs without cycles, the moves may have properties that make SGT insufficient for their analysis. Paradigmatic are entailing moves \citep{BCG001}; once a player makes such a move in a component $G$ to a position $G'$, the opponent is forced to respond locally with some $G''$ belonging to the set of available options in $G'$. When this set of options has at most one element, we have a particular case of entailing move, called carry-on move; if it is empty, the opponent loses immediately (they are compelled to play in a position where there are no options); if it is singular, the move to $G''$ is mandatory, and, in practice, the initial move causes a ``leap'' from $G$ to $G''$, while maintaining the right to play. Regarding carry-on moves, sometimes the rules of real-life games do not even consider the intervention of the opponent, as they might be a machine devoid of intelligence -- it is more common to say that a player can move from $G$ to $G''$, \emph{having to play again}. \cite{Lar021} elucidated the algebra of a generalization of SGT suitable for analyzing impartial games with entailing moves (LNST). We also suggest the motivational prelude in \cite{Lar024}.

Both SFPT and LNST are generalizations of SGT; however, they are independent generalizations. This means that neither of them can be used to analyze cyclic impartial games with entailing moves. A theory general enough to cover both cycles and all types of entailing moves would necessarily be very heavy. However, by restricting to just the case where the entailing moves are \emph{carry-on moves}, it is possible to propose an effective solution. We note that \cite{Sue24} constitutes a prior study in this regard. However, Section \ref{sec:carry}, the main contribution of this paper, presents a more comprehensive proposal, with Theorem \ref{th:cycliccarrysum}  being a more general result than what was presented in that previous work. Following that, Section \ref{sec:hackenbush} presents a case study application, the game {\sc green-lime hackenbush}.

Before moving forward, it is important to clarify the meaning we are assigning to the word \emph{theory}.
In Figure \ref{fig:fig1}, on the left, there is a digraph\footnote{This digraph has been used as an example several times by Aviezri Frankel -- see \cite{Con001}.} with three pieces, and, on the right, the same digraph exhibits the Grundy values assigned to each node. Since the digraph has cycles, SFPT is used to determine these values, as will be discussed and detailed later on. The fact that there are three pieces makes the position a disjunctive sum with three components. Note that the pieces do not conflict in any way, as they can share nodes. For now, the point is that a theory must be able to algorithmically assign a Grundy value to each node (1), provide a way to determine the Grundy value of a disjunctive sum from the Grundy values of the components (2), and provide a way to determine the outcome of a position from its Grundy value~(3). In this work, the theories are presented accordingly: an algorithm for Grundy values determination and algebra to handle disjunctive sums and outcomes. In examples similar to that of Figure \ref{fig:fig1}, the small numbers next to the nodes indicate the step of the algorithm in which a node is \emph{assigned} with data different from $\infty$. Typically, nodes remain unassigned during part of the algorithm, changing their state at some moment. Those that remain unassigned until the end maintain the symbol $\infty$, forming part of a cyclic zone. In the figures, bold nodes constitute cyclic zones and have infinite rank. In the position shown, the Grundy values of the components are $1$, $3$, and $\infty_{2,3}$, and the Grundy value of the disjunctive sum is $\infty_{0,1}$. Taking this into account, it can be concluded that the position is an $\mathcal{N}$-position, and the Next player wins by removing the piece with Grundy value $\infty_{2,3}$ from the cyclic zone with a move to the node whose Grundy value is $2$. This move achieves a $\mathcal{P}$-position. This analysis exemplifies the purpose of a theory like SFPT.

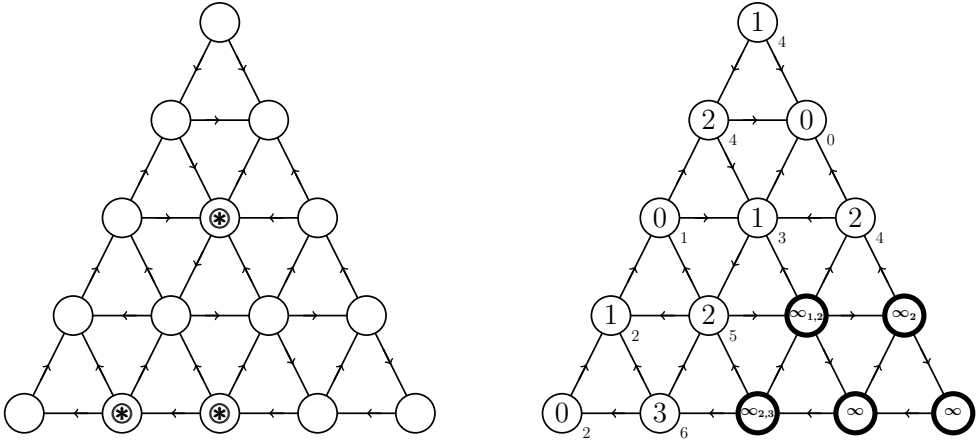
\begin{figure}[htb!]
\hspace{-0.65cm}
\scalebox{0.65}{
\begin{tikzpicture}
\clip(15.,-13.) rectangle (36.,-3.);
\draw (20.,-12.+0.35) node[anchor=north] {\scalebox{1.4}{$\bm{\circledast}$}};
\draw (18.,-12.+0.35) node[anchor=north] {\scalebox{1.4}{$\bm{\circledast}$}};
\draw (20.,-8.+0.35) node[anchor=north] {\scalebox{1.4}{$\bm{\circledast}$}};
\draw [line width=1.pt] (20.,-4.) circle (0.4cm);
\draw [line width=1.pt] (19.,-6.) circle (0.4cm);
\draw [line width=1.pt] (21.,-6.) circle (0.4cm);
\draw [line width=1.pt] (18.,-8.) circle (0.4cm);
\draw [line width=1.pt] (20.,-8.) circle (0.4cm);
\draw [line width=1.pt] (22.,-8.) circle (0.4cm);
\draw [line width=1.pt] (17.,-10.) circle (0.4cm);
\draw [line width=1.pt] (19.,-10.) circle (0.4cm);
\draw [line width=1.pt] (21.,-10.) circle (0.4cm);
\draw [line width=1.pt] (23.,-10.) circle (0.4cm);
\draw [line width=1.pt] (16.,-12.) circle (0.4cm);
\draw [line width=1.pt] (18.,-12.) circle (0.4cm);
\draw [line width=1.pt] (20.,-12.) circle (0.4cm);
\draw [line width=1.pt] (22.,-12.) circle (0.4cm);
\draw [line width=1.pt] (24.,-12.) circle (0.4cm);
\draw [->,line width=1.pt] (19.821114561799998,-4.357770876399992) -- (19.5,-5.);
\draw [line width=1.pt] (19.660557280899994,-4.678885438199998)-- (19.178885438199988,-5.642229123600017);
\draw [->,line width=1.pt] (20.17888543820001,-4.357770876400029) -- (20.5,-5.);
\draw [line width=1.pt] (20.339442719100006,-4.6788854382000284)-- (20.82111456180001,-5.642229123600028);
\draw [->,line width=1.pt] (19.4,-6.) -- (19.999999999999893,-6.);
\draw [line width=1.pt] (19.7,-6.)-- (20.59999999999983,-6.);
\draw [->,line width=1.pt] (18.178885438199984,-7.642229123600014) -- (18.5,-7.);
\draw [line width=1.pt] (18.339442719099992,-7.321114561800002)-- (18.82111456180001,-6.357770876399966);
\draw [->,line width=1.pt] (19.178885438199984,-6.357770876399977) -- (19.5,-7.);
\draw [line width=1.pt] (19.339442719099992,-6.678885438199994)-- (19.821114561800016,-7.642229123600044);
\draw [->,line width=1.pt] (20.178885438200005,-7.6422291235999795) -- (20.5,-7.);
\draw [line width=1.pt] (20.3394427191,-7.321114561799989)-- (20.821114561799988,-6.357770876400018);
\draw [->,line width=1.pt] (21.82111456179996,-7.642229123599932) -- (21.5,-7.);
\draw [line width=1.pt] (21.660557280899965,-7.321114561799947)-- (21.178885438199988,-6.357770876399991);
\draw [->,line width=1.pt] (18.4,-8.) -- (19.,-8.);
\draw [line width=1.pt] (18.7,-8.)-- (19.6,-8.);
\draw [->,line width=1.pt] (21.5999999999999,-8.) -- (21.,-8.);
\draw [line width=1.pt] (21.3,-8.)-- (20.4,-8.);
\draw [->,line width=1.pt] (19.4,-10.) -- (20.,-10.);
\draw [line width=1.pt] (19.7,-10.)-- (20.6,-10.);
\draw [->,line width=1.pt] (21.4,-10.) -- (22.,-10.);
\draw [line width=1.pt] (21.7,-10.)-- (22.599999999999895,-10.);
\draw [->,line width=1.pt] (18.6,-10.) -- (18.,-10.);
\draw [line width=1.pt] (18.3,-10.)-- (17.4,-10.);
\draw [->,line width=1.pt] (17.1788854382,-9.642229123599991) -- (17.5,-9.);
\draw [line width=1.pt] (17.3394427191,-9.321114561799984)-- (17.82111456180001,-8.357770876399966);
\draw [->,line width=1.pt] (18.821114561800023,-9.64222912360004) -- (18.5,-9.);
\draw [line width=1.pt] (18.66055728090002,-9.321114561800032)-- (18.178885438200005,-8.357770876400005);
\draw [->,line width=1.pt] (19.8211145618,-8.357770876399993) -- (19.5,-9.);
\draw [line width=1.pt] (19.6605572809,-8.678885438199993)-- (19.178885438200002,-9.642229123599993);
\draw [->,line width=1.pt] (20.82111456179996,-9.642229123599947) -- (20.5,-9.);
\draw [line width=1.pt] (20.660557280899965,-9.32111456179996)-- (20.178885438199988,-8.357770876399998);
\draw [->,line width=1.pt] (21.178885438200044,-9.642229123599884) -- (21.5,-9.);
\draw [line width=1.pt] (21.33944271910002,-9.321114561799934)-- (21.821114561799952,-8.35777087640008);
\draw [->,line width=1.pt] (22.82111456180002,-9.642229123600044) -- (22.5,-9.);
\draw [line width=1.pt] (22.660557280900022,-9.321114561800055)-- (22.17888543820004,-8.357770876400084);
\draw [->,line width=1.pt] (17.6,-12.) -- (17.,-12.);
\draw [line width=1.pt] (17.3,-12.)-- (16.4,-12.);
\draw [->,line width=1.pt] (19.5999999999999,-12.) -- (19.,-12.);
\draw [line width=1.pt] (19.3,-12.)-- (18.4,-12.);
\draw [->,line width=1.pt] (21.5999999999999,-12.) -- (21.,-12.);
\draw [line width=1.pt] (21.3,-12.)-- (20.4000000000001,-12.);
\draw [->,line width=1.pt] (23.6,-12.) -- (23.,-12.);
\draw [line width=1.pt] (23.3,-12.)-- (22.4,-12.);
\draw [->,line width=1.pt] (16.178885438199956,-11.642229123600096) -- (16.5,-11.);
\draw [line width=1.pt] (16.339442719099967,-11.321114561800073)-- (16.821114561799998,-10.357770876400007);
\draw [->,line width=1.pt] (17.821114561800012,-11.642229123600016) -- (17.5,-11.);
\draw [line width=1.pt] (17.660557280900008,-11.321114561800012)-- (17.178885438200005,-10.357770876400004);
\draw [->,line width=1.pt] (18.1788854382,-11.642229123599991) -- (18.5,-11.);
\draw [line width=1.pt] (18.3394427191,-11.321114561799988)-- (18.8211145618,-10.357770876399979);
\draw [->,line width=1.pt] (19.821114561799973,-11.642229123599943) -- (19.5,-11.);
\draw [line width=1.pt] (19.66055728089998,-11.32111456179996)-- (19.178885438200005,-10.357770876400007);
\draw [->,line width=1.pt] (20.17888543820005,-11.642229123599892) -- (20.5,-11.);
\draw [line width=1.pt] (20.339442719100028,-11.321114561799945)-- (20.821114561799945,-10.357770876400107);
\draw [->,line width=1.pt] (21.17888543820004,-10.357770876400082) -- (21.5,-11.);
\draw [line width=1.pt] (21.339442719100028,-10.678885438200053)-- (21.821114561799984,-11.642229123599968);
\draw [->,line width=1.pt] (22.17888543819997,-11.642229123600005) -- (22.5,-11.);
\draw [line width=1.pt] (22.339442719099985,-11.321114561799977)-- (22.821114561800023,-10.357770876399893);
\draw [->,line width=1.pt] (23.17888543819997,-10.35777087639993) -- (23.5,-11.);
\draw [line width=1.pt] (23.339442719099985,-10.678885438199964)-- (23.821114561800037,-11.642229123600066);
\draw [line width=1.pt] (31.,-4.) circle (0.4cm);
\draw [line width=1.pt] (30.,-6.) circle (0.4cm);
\draw [line width=1.pt] (32.,-6.) circle (0.4cm);
\draw [line width=1.pt] (29.,-8.) circle (0.4cm);
\draw [line width=1.pt] (31.,-8.) circle (0.4cm);
\draw [line width=1.pt] (33.,-8.) circle (0.4cm);
\draw [line width=1.pt] (28.,-10.) circle (0.4cm);
\draw [line width=1.pt] (30.,-10.) circle (0.4cm);
\draw [line width=3.pt] (32.,-10.) circle (0.4cm);
\draw [line width=3.pt] (34.,-10.) circle (0.4cm);
\draw [line width=1.pt] (27.,-12.) circle (0.4cm);
\draw [line width=1.pt] (29.,-12.) circle (0.4cm);
\draw [line width=3.pt] (31.,-12.) circle (0.4cm);
\draw [line width=3.pt] (33.,-12.) circle (0.4cm);
\draw [line width=3.pt] (35.,-12.) circle (0.4cm);
\draw [->,line width=1.pt] (30.821114561799973,-4.357770876400039) -- (30.5,-5.);
\draw [line width=1.pt] (30.660557280899983,-4.678885438200019)-- (30.178885438200016,-5.642229123599958);
\draw [->,line width=1.pt] (31.178885438200005,-4.357770876400021) -- (31.5,-5.);
\draw [line width=1.pt] (31.3394427191,-4.678885438200004)-- (31.821114561799977,-5.642229123599949);
\draw [->,line width=1.pt] (30.4000000000001,-6.) -- (31.,-6.);
\draw [line width=1.pt] (30.7,-6.)-- (31.599999999999895,-6.);
\draw [->,line width=1.pt] (29.178885438200012,-7.642229123599967) -- (29.5,-7.);
\draw [line width=1.pt] (29.339442719100003,-7.321114561799986)-- (29.821114561799973,-6.3577708764000445);
\draw [->,line width=1.pt] (30.178885438200034,-6.35777087640007) -- (30.5,-7.);
\draw [line width=1.pt] (30.339442719100017,-6.678885438200039)-- (30.82111456179997,-7.642229123599943);
\draw [->,line width=1.pt] (31.178885438200027,-7.642229123599896) -- (31.5,-7.);
\draw [line width=1.pt] (31.33944271910001,-7.321114561799936)-- (31.821114561799956,-6.357770876400057);
\draw [->,line width=1.pt] (32.821114561799966,-7.642229123599961) -- (32.5,-7.);
\draw [line width=1.pt] (32.66055728089998,-7.321114561799986)-- (32.17888543820002,-6.357770876400061);
\draw [->,line width=1.pt] (34.60000000000018,-12.) -- (34.00000000000014,-12.);
\draw [line width=1.pt] (34.30000000000016,-12.)-- (33.400000000000105,-12.);
\draw [->,line width=1.pt] (32.60000000000017,-12.) -- (32.000000000000135,-12.);
\draw [line width=1.pt] (32.30000000000015,-12.)-- (31.400000000000098,-12.);
\draw [->,line width=1.pt] (30.599999999999756,-12.) -- (29.999999999999858,-12.);
\draw [line width=1.pt] (30.299999999999805,-12.)-- (29.4,-12.);
\draw [->,line width=1.pt] (28.59999999999976,-12.) -- (27.999999999999787,-12.);
\draw [line width=1.pt] (28.299999999999773,-12.)-- (27.399999999999814,-12.);
\draw [->,line width=1.pt] (27.17888543819993,-11.642229123600083) -- (27.5,-11.);
\draw [line width=1.pt] (27.339442719099935,-11.321114561800073)-- (27.82111456179995,-10.357770876400046);
\draw [->,line width=1.pt] (28.821114561800012,-11.642229123600014) -- (28.5,-11.);
\draw [line width=1.pt] (28.660557280900015,-11.321114561800018)-- (28.178885438200023,-10.35777087640003);
\draw [->,line width=1.pt] (29.178885438200027,-11.64222912359994) -- (29.5,-11.);
\draw [line width=1.pt] (29.339442719100013,-11.32111456179997)-- (29.821114561799966,-10.357770876400057);
\draw [->,line width=1.pt] (30.821114561799977,-11.642229123599972) -- (30.5,-11.);
\draw [line width=1.pt] (30.66055728089998,-11.321114561799975)-- (30.178885438199984,-10.357770876399986);
\draw [->,line width=1.pt] (31.17888543820003,-11.642229123599911) -- (31.5,-11.);
\draw [line width=1.pt] (31.339442719100013,-11.321114561799947)-- (31.821114561799963,-10.357770876400052);
\draw [->,line width=1.pt] (32.17888543820003,-10.357770876400057) -- (32.5,-11.);
\draw [line width=1.pt] (32.33944271910001,-10.678885438200028)-- (32.82111456179997,-11.642229123599943);
\draw [->,line width=1.pt] (33.178885438200005,-11.642229123599964) -- (33.5,-10.999999999999844);
\draw [line width=1.pt] (33.339442719100035,-11.321114561799904)-- (33.82111456180013,-10.357770876399725);
\draw [->,line width=1.pt] (34.17888543819987,-10.357770876399757) -- (34.5,-11.);
\draw [line width=1.pt] (34.33944271909992,-10.678885438199856)-- (34.82111456180006,-11.642229123600151);
\draw [->,line width=1.pt] (29.5999999999999,-10.) -- (29.,-10.);
\draw [line width=1.pt] (29.3,-10.)-- (28.4000000000001,-10.);
\draw [->,line width=1.pt] (30.40000000000039,-10.) -- (31.000000000000142,-10.);
\draw [line width=1.pt] (30.700000000000266,-10.)-- (31.599999999999895,-10.);
\draw [->,line width=1.pt] (32.400000000000105,-10.) -- (33.00000000000014,-10.);
\draw [line width=1.pt] (32.700000000000124,-10.)-- (33.60000000000018,-10.);
\draw [->,line width=1.pt] (28.178885438200023,-9.642229123599941) -- (28.5,-9.);
\draw [line width=1.pt] (28.339442719100013,-9.321114561799963)-- (28.821114561799984,-8.35777087640003);
\draw [->,line width=1.pt] (29.821114561799977,-9.642229123599972) -- (29.5,-9.);
\draw [line width=1.pt] (29.660557280899994,-9.321114561799996)-- (29.178885438200034,-8.357770876400071);
\draw [->,line width=1.pt] (30.821114561799952,-8.357770876400098) -- (30.5,-9.);
\draw [line width=1.pt] (30.66055728089998,-8.678885438200046)-- (30.178885438200055,-9.642229123599892);
\draw [->,line width=1.pt] (31.821114561799952,-9.642229123599893) -- (31.5,-9.);
\draw [line width=1.pt] (31.66055728089997,-9.321114561799927)-- (31.178885438200016,-8.357770876400032);
\draw [->,line width=1.pt] (32.17888543820004,-9.642229123599918) -- (32.5,-9.);
\draw [line width=1.pt] (32.33944271910002,-9.321114561799952)-- (32.82111456179997,-8.357770876400057);
\draw [->,line width=1.pt] (33.821114561800094,-9.642229123600139) -- (33.5,-9.000000000000101);
\draw [line width=1.pt] (33.66055728090008,-9.321114561800119)-- (33.17888543820005,-8.357770876400064);
\draw [->,line width=1.pt] (32.600000000000456,-8.) -- (32.00000000000028,-8.);
\draw [line width=1.pt] (32.30000000000037,-8.)-- (31.400000000000098,-8.);
\draw [->,line width=1.pt] (29.400000000000105,-8.) -- (30.,-8.);
\draw [line width=1.pt] (29.7,-8.)-- (30.599999999999895,-8.);

\draw (31,-3.65) node[anchor=north] {\scalebox{1.42}{$1$}};

\draw (31-1,-3.65-2) node[anchor=north] {\scalebox{1.42}{$2$}};
\draw (31+1,-3.65-2) node[anchor=north] {\scalebox{1.42}{$0$}};

\draw (31-1-1,-3.65-2-2) node[anchor=north] {\scalebox{1.42}{$0$}};
\draw (31-1+1,-3.65-2-2) node[anchor=north] {\scalebox{1.42}{$1$}};
\draw (31-1+1+2,-3.65-2-2) node[anchor=north] {\scalebox{1.42}{$2$}};

\draw (31-1-1-1,-3.65-2-2-2) node[anchor=north] {\scalebox{1.42}{$1$}};
\draw (31-1-1+1,-3.65-2-2-2) node[anchor=north] {\scalebox{1.42}{$2$}};
\draw (31-1-1+1+2,-3.65-2-2-2.1) node[anchor=north] {\scalebox{0.75}{$\bm{\infty_{1,2}}$}};
\draw (31-1-1+1+2+2,-3.65-2-2-2.1) node[anchor=north] {\scalebox{0.75}{$\bm{\infty_{2}}$}};

\draw (31-1-1-1-1,-3.65-2-2-2-2) node[anchor=north] {\scalebox{1.42}{$0$}};
\draw (31-1-1-1+1,-3.65-2-2-2-2) node[anchor=north] {\scalebox{1.42}{$3$}};
\draw (31-1-1-1+1+2,-3.65-2-2-2-2.1) node[anchor=north] {\scalebox{0.75}{$\bm{\infty_{2,3}}$}};
\draw (31-1-1-1+1+2+2,-3.65-2-2-2-2.1) node[anchor=north] {\scalebox{0.75}{$\bm{\infty}$}};
\draw (31-1-1-1+1+2+2+2,-3.65-2-2-2-2.1) node[anchor=north] {\scalebox{0.75}{$\bm{\infty}$}};

\draw (31+0.5,-3.65-0.5) node[anchor=north] {\scalebox{0.8}{$4$}};

\draw (31-1+0.5,-3.65-2-0.5) node[anchor=north] {\scalebox{0.8}{$4$}};
\draw (31+1+0.5,-3.65-2-0.5) node[anchor=north] {\scalebox{0.8}{$0$}};

\draw (31-1-1+0.5,-3.65-2-2-0.5) node[anchor=north] {\scalebox{0.8}{$1$}};
\draw (31-1+1+0.5,-3.65-2-2-0.5) node[anchor=north] {\scalebox{0.8}{$3$}};
\draw (31-1+1+2+0.5,-3.65-2-2-0.5) node[anchor=north] {\scalebox{0.8}{$4$}};

\draw (31-1-1-1+0.5,-3.65-2-2-2-0.5) node[anchor=north] {\scalebox{0.8}{$2$}};
\draw (31-1-1+1+0.5,-3.65-2-2-2-0.5) node[anchor=north] {\scalebox{0.8}{$5$}};
\draw (31-1-1-1-1+0.5,-3.65-2-2-2-2-0.5) node[anchor=north] {\scalebox{0.8}{$2$}};
\draw (31-1-1-1+1+0.5,-3.65-2-2-2-2-0.5) node[anchor=north] {\scalebox{0.8}{$6$}};
\end{tikzpicture}}
    \caption{Fraenkel's digraph.}
    \label{fig:fig1}
\end{figure}

The following subsections delve into SGT, SFPT, and LNST, without including proofs for known results, as these can be found in the mentioned references. However, to combine the different theories, some terminology needs to be adjusted, adapting it to graph terminology and algorithmic presentation. We suggest that readers familiarize themselves with the approach used in these subsections, as it will enable them to better follow the original content that follows. Even though these are in the introductory section, the introduction effectively concludes here, and the core of the paper begins with them, especially since they already present some original results.

\subsection{Fundamental definitions and terminology}\label{subsec:def}

\vspace{0.4cm}
Usually, an impartial game $G$ is represented by a form $\{G_1,G_2,\ldots,G_k\}$ corresponding to the set of its available options; $G'$ is used to denote a generic option of $G$. However, here, we adopt a definition based on game digraphs. This approach offers the great advantage of allowing the precise specification of each theory through the restrictions on the game digraphs, i.e., describing which type of digraphs should be considered.

\begin{definition}\label{def:cycliccarrygame}
A \emph{cyclic impartial game with entailing moves} is a pair $G=(V,x)$, where $V$ is a finite digraph \emph{potentially with cycles} and $x$ is a node of $V$. The nodes can be \emph{white} or \emph{gray}.
\end{definition}

The idea behind the notation $(V,x)$ is precisely the gameplay with pieces, as mentioned before. In other words, $(V,x)$ means that there is a piece on the node $x$. The existence of white and gray nodes is due to the possibility of entailing moves. Whenever a player moves a piece to a gray node, \emph{the opponent is compelled to move that very same piece in response}. Naturally, entailing moves only gain a special status in disjunctive sums, which will be defined next (positions with more than one piece). It is clear that, when there is a single piece, the player must always move it on their turn. The possibility of choice arises only when there are multiple pieces, and this is where the color of the node comes into play. When a piece is moved to a gray node, the usual logic of the disjunctive sum is lost as the mandatory local response eliminates the possibility of choice. Before formalizing the disjunctive sum, it is important to establish some terminology.

\begin{definition}\label{def:options}
Given a game $G=(V,x)$, an \emph{instance} of $G$ is a game $\overline{G}=(V,x^*)$ that shares the underlying digraph $V$ with $G$. The \emph{options} of $G$ are games of the form $G'=(V,x')$, where there is an arc $\langle x,x'\rangle$ from $x$ to $x'$. The options of $G$, the options of options, etc., are the \emph{followers} of $G$, with the generic follower denoted by $G^\downarrow$.
\end{definition}

\noindent
\textbf{Notation:}
When writing $(G,x)$, $x$ can be either a white or a gray node. Whenever we wish to specify, we write $x^\circ$ for a white node and $x^{\textcolor{gray}{\bullet}}$ for a gray node\footnote{Strictly speaking, it would have been more accurate to write $G=(V_w,V_g,x)$. However, we prefer the shorter form $G=(V,x)$, assuming that $V=V_w\cup V_g$ and writing $x^\circ$ or $x^{\textcolor{gray}{\bullet}}$ whenever we want to specify the color of a vertex.}.

\begin{definition}\label{def:cartiproduct}
Let $V$ and $U$ be finite digraphs potentially with cycles with white and gray nodes. The \emph{cartproduct}\footnote{
In the case of short games, without cycles or gray nodes, this product is the cartesian product of digraphs, hence the choice of the name.} $V\ast U$ of the digraphs $V$ and $U$ is the digraph fulfilling the following:
\begin{enumerate}
  \item[\scalebox{0.5}{$\bullet$}] The vertex set $V\ast U$ is $$V\times U\backslash \{(x^{\textcolor{gray}{\bullet}},y^{\textcolor{gray}{\bullet}}):x^{\textcolor{gray}{\bullet}}\in V \text{ and }y^{\textcolor{gray}{\bullet}}\in U\},$$ where $\times$ denotes the cartesian product of the nodes of $V$ and $U$.
  \item[\scalebox{0.5}{$\bullet$}] Any two vertices $(x,y)$ and $(x^*,y^*)$ are connected by the edge $\langle (x,y),(x^*,y^*)\rangle$  in $V\ast U$ if and only if either $\langle x,x^*\rangle$ is an arc in $V$ and $y^\circ=y^{*\circ}$ or $\langle y,y^*\rangle$ is an arc in $U$ and $x^\circ=x^{*\circ}$.
\end{enumerate}
\end{definition}

\begin{observation}
The definition of cartproduct captures the core concept of the disjunctive sum, where, when faced with two pieces on the digraph, a player selects one to move ($\langle x,x^*\rangle$ is an arc in $V$), leaving the other untouched ($y^\circ=y^{*\circ}$). However, notice that $y^\circ=y^{*\circ}$ was written instead of $y=y^*$. This is because the piece that remains untouched must be in a white node, \emph{otherwise the player would be compelled to move that piece}. Also, regarding the vertex set, the pairs $(x^{\textcolor{gray}{\bullet}},y^{\textcolor{gray}{\bullet}})$ were excluded, since at no moment in the play can two pieces occupy two gray nodes.
\end{observation}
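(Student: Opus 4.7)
The plan is to make rigorous the two claims embedded in the observation: (i) the arc set of $V\ast U$ correctly describes the legal single moves in the disjunctive sum of $(V,x)$ and $(U,y)$, and (ii) excising the vertices of the form $(x^{\textcolor{gray}{\bullet}},y^{\textcolor{gray}{\bullet}})$ faithfully encodes an invariant of every legitimate play. Because the observation is essentially a semantic justification of \cref{def:cartiproduct}, each half reduces to an unpacking of the rules for entailing moves against the graph-theoretic definition.

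First I would establish (i) by direct correspondence. A legal single move in a two-piece position with pieces on $x$ and $y$ falls into two symmetric cases: move the $V$-piece along some arc $\langle x,x^*\rangle$ of $V$, or move the $U$-piece along some arc $\langle y,y^*\rangle$ of $U$. In the first case the $U$-piece stays put, so its vertex does not change; but this is a \emph{free} choice for the mover only when the $U$-piece is on a white node, since otherwise the opponent's previous move would have entailed the mover to play the $U$-piece instead. Hence the untouched vertex $y=y^*$ must additionally be white, which is precisely what the notation $y^\circ=y^{*\circ}$ expresses. The second case is symmetric. Conversely, any pair $((x,y),(x^*,y^*))$ satisfying the stated arc condition corresponds to exactly one such legal move, so the arcs of $V\ast U$ are in bijection with the available moves in the disjunctive sum.

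Next I would justify (ii) by an invariant argument. A piece occupies a gray node exactly when it was the destination of an entailing move on the immediately preceding turn, and it compels the current player to move \emph{that} same piece. Because only one piece moves per turn, at most one piece can be in this ``just-placed'' state at any instant, so at most one of the two pieces can sit on a gray node. A short induction on the length of play shows that a state $(x^{\textcolor{gray}{\bullet}},y^{\textcolor{gray}{\bullet}})$ is never reached from any legitimate starting position: the arc correspondence from (i) produces a successor with the moved piece possibly on a gray node but the untouched piece necessarily on a white node. Thus the deletion of those pairs from the vertex set does not alter any play of the compound game.

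The main obstacle is careful notation management: one must not conflate ``$y$ equals $y^*$'' with ``$y$ and $y^*$ are both white and equal''. The superscript $\circ$ is an atomic part of the symbol asserting whiteness, and it is precisely this subtlety that allows a purely graph-theoretic arc condition to enforce the entailment semantics. Once that distinction is internalized, both claims reduce to short arguments of the type above.
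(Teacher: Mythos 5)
Your proposal is correct and follows essentially the same reasoning the paper gives: the observation is justified in the paper only by the informal remarks that the untouched piece must be white (else the mover would be entailed there) and that two pieces can never simultaneously occupy gray nodes, and your arc-to-move correspondence plus the one-gray-at-a-time invariant (anchored in the hypothesis of Definition \ref{def:disjunctivesum} that $x$ and $y$ are not both gray) is just a careful formalization of exactly that argument.
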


\begin{definition}\label{def:disjunctivesum}
Given two games $G=(V,x)$ and $H=(U,y)$ where the nodes $x$ and $y$ are not both gray, the disjunctive sum of $G$ and $H$ is the game $G+H=(V\ast U,(x,y))$.
\end{definition}

\begin{observation}
In practice, it is common to play games \emph{on the same board with more than one piece}, so
often, the cartproduct of interest is $V\ast V \ast V \ast\ldots$ However, it is important to emphasize that finding the cartproduct is a highly impractical process, due to its gigantism. Therefore, the goal should be to find the Grundy value of the sum from the Grundy values of the components without going through the process. In a way, that is the fundamental purpose of all the theories analyzed here. In Figure \ref{fig:fig2}, on the left, there is a digraph $V$ with two pieces. On the right, there is the cartproduct $V\ast V$, which consists of a digraph with a single piece. Playing the position on the left with the two pieces is, in practical terms, the same as playing the one on the right with just one. It is also important to observe that the options of a game $G+H$ are of the form $G'+H$ (provided the player is not entailed in $H$) or of the form $G+H'$ (provided the player is not entailed in $G$), as typically found in the standard definition of disjunctive sum.
\end{observation}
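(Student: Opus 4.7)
The observation bundles three assertions: (i) the cartproduct captures the intuitive notion of disjunctive sum, (ii) in practice one does not want to compute the cartproduct explicitly, and (iii) the options of $G+H$ have the form $G'+H$ or $G+H'$, subject to the entailment caveats. The first is interpretive and is justified by pointing directly at the two clauses of Definition \ref{def:cartiproduct}: the arc condition ``$\langle x,x^*\rangle$ is an arc in $V$ and $y^\circ = y^{*\circ}$'' encodes ``move one piece, leave the other alone, and the untouched piece must not force a local response''. The second assertion is a complexity remark rather than a mathematical claim, so it only needs the forward reference to Figure~\ref{fig:fig2} and an informal size argument; no proof is required.

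The substantive part is (iii), and my plan is simply to unfold the definitions in order. Start from $G+H = (V\ast U,(x,y))$, which is well defined because $x,y$ are not both gray, so $(x,y)\in V\ast U$ by the vertex exclusion in Definition~\ref{def:cartiproduct}. By Definition~\ref{def:options}, an option of $G+H$ is a game $(V\ast U,(x^*,y^*))$ for some arc $\langle (x,y),(x^*,y^*)\rangle$ in $V\ast U$. Definition~\ref{def:cartiproduct} then forces exactly one of the two cases: (a) $\langle x,x^*\rangle$ is an arc in $V$ with $y^\circ = y^{*\circ}$, or (b) $\langle y,y^*\rangle$ is an arc in $U$ with $x^\circ = x^{*\circ}$.

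In case~(a), the condition $y^\circ = y^{*\circ}$ says that $y$ and $y^*$ are the same white node; equivalently, $y^*=y$ and $y$ is white, which is exactly the statement ``the player is not entailed in $H$''. Setting $G'=(V,x^*)$, which is an option of $G$ by Definition~\ref{def:options}, I need to identify $(V\ast U,(x^*,y))$ with $G'+H$; this is just Definition~\ref{def:disjunctivesum} applied to the pair $(G',H)$, legitimate because $x^*$ and $y$ are not both gray ($y$ is white). Case~(b) is symmetric, producing options of the form $G+H'$ with $x$ white. The only place a trap could hide is the vertex-exclusion clause in Definition~\ref{def:cartiproduct}: one must check that the target $(x^*,y^*)$ of the arc actually lives in $V\ast U$, but in case~(a) this is immediate since $y^*=y$ is white, and similarly in case~(b). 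I expect no real obstacle; the argument is pure definition-chasing, and the only subtlety worth flagging is that the colour-matching constraint $y^\circ = y^{*\circ}$ is doing double duty, ruling out moves in $V$ whenever the companion piece sits on a gray node of $U$.
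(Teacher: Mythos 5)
Your proposal is correct: the paper states this as an unproved observation, and the justification it implicitly relies on is exactly the definition-chasing you carry out — unfolding Definitions \ref{def:options}, \ref{def:cartiproduct}, and \ref{def:disjunctivesum} so that the two arc clauses of the cartproduct yield the two option forms $G'+H$ and $G+H'$, with the colour-matching condition $y^\circ=y^{*\circ}$ encoding ``not entailed in the other component.'' Your added check that the target vertex survives the exclusion of doubly-gray pairs, and that $G'+H$ is well formed because the untouched node is white, is the right (and only) subtlety; nothing is missing.
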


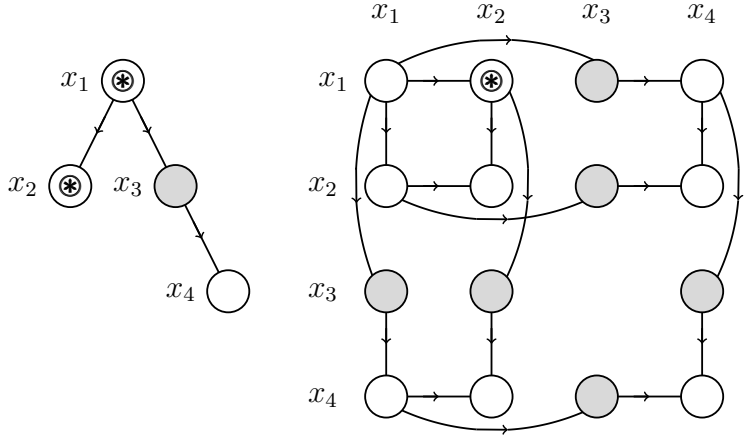
\begin{figure}[htb!]
\begin{center}
\scalebox{0.7}{
\begin{tikzpicture}
\clip(-7.3,-3.) rectangle (7.,5.7);
\draw (-6,2+0.35) node[anchor=north] {\scalebox{1.4}{$\bm{\circledast}$}};
\draw (-5,4+0.35) node[anchor=north] {\scalebox{1.4}{$\bm{\circledast}$}};
\draw (2,4+0.35) node[anchor=north] {\scalebox{1.4}{$\bm{\circledast}$}};
\draw [line width=1.pt] (0.,4.) circle (0.4cm);
\draw [line width=1.pt] (0.,2.) circle (0.4cm);
\draw [line width=1.pt,fill=gray!30] (0.,0.) circle (0.4cm);
\draw [line width=1.pt] (0.,-2.) circle (0.4cm);
\draw [line width=1.pt] (2.,4.) circle (0.4cm);
\draw [line width=1.pt] (2.,2.) circle (0.4cm);
\draw [line width=1.pt, fill=gray!30] (2.,0.) circle (0.4cm);
\draw [line width=1.pt] (2.,-2.) circle (0.4cm);
\draw [line width=1.pt, fill=gray!30] (4.,4.) circle (0.4cm);
\draw [line width=1.pt, fill=gray!30] (4.,2.) circle (0.4cm);
\draw [line width=1.pt, fill=gray!30] (4.,-2.) circle (0.4cm);
\draw [line width=1.pt] (6.,4.) circle (0.4cm);
\draw [line width=1.pt] (6.,2.) circle (0.4cm);
\draw [line width=1.pt,fill=gray!30] (6.,0.) circle (0.4cm);
\draw [line width=1.pt] (6.,-2.) circle (0.4cm);
\draw [->,line width=1.pt] (0.,3.6) -- (0.,3.);
\draw [line width=1.pt] (0.,3.3)-- (0.,2.4);
\draw [->,line width=1.pt] (0.,-0.4) -- (0.,-1.);
\draw [line width=1.pt] (0.,-0.7)-- (0.,-1.6);
\draw [->,line width=1.pt] (2.,3.6) -- (2.,3.);
\draw [line width=1.pt] (2.,3.3)-- (2.,2.4);
\draw [->,line width=1.pt] (2.,-0.4) -- (2.,-1.);
\draw [line width=1.pt] (2.,-0.7)-- (2.,-1.6);
\draw [->,line width=1.pt] (6.,3.6) -- (6.,3.);
\draw [line width=1.pt] (6.,3.3)-- (6.,2.4);
\draw [->,line width=1.pt] (6.,-0.4) -- (6.,-1.);
\draw [line width=1.pt] (6.,-0.7)-- (6.,-1.6);
\draw [->,line width=1.pt] (0.4,4.) -- (1.,4.);
\draw [line width=1.pt] (0.7,4.)-- (1.6,4.);
\draw [->,line width=1.pt] (0.4,-2.) -- (1.,-2.);
\draw [line width=1.pt] (0.7,-2.)-- (1.6,-2.);
\draw [->,line width=1.pt] (4.4,-2.) -- (5.,-2.);
\draw [line width=1.pt] (4.7,-2.)-- (5.6,-2.);
\draw [->,line width=1.pt] (4.4,2.) -- (5.,2.);
\draw [line width=1.pt] (4.7,2.)-- (5.6,2.);
\draw [->,line width=1.pt] (4.4,4.) -- (5.,4.);
\draw [line width=1.pt] (4.7,4.)-- (5.6,4.);
\draw [->,line width=1.pt] (1.77869855965319,4.766633715765787) -- (2.3782366253564273,4.777546053133112);
\draw [shift={(2.1568036630969463,0.468203864738077)},line width=1.pt]  plot[domain=1.6585340385614944:2.0340912994602194,variable=\t]({1.*4.315027537971956*cos(\t r)+0.*4.315027537971956*sin(\t r)},{0.*4.315027537971956*cos(\t r)+1.*4.315027537971956*sin(\t r)});
\draw [shift={(2.1568036630969463,0.46820386473808123)},line width=1.pt]  plot[domain=1.143899818877642:1.5194570797763676,variable=\t]({1.*4.315027537971952*cos(\t r)+0.*4.315027537971952*sin(\t r)},{0.*4.315027537971952*cos(\t r)+1.*4.315027537971952*sin(\t r)});
\draw [shift={(2.0558639671130794,5.654233019794052)},line width=1.pt]  plot[domain=1.6505853116845957:1.9920065613774014,variable=\t]({1.*4.293783219123118*cos(\t r)+0.*4.293783219123118*sin(\t r)},{0.*4.293783219123118*cos(\t r)+-1.*4.293783219123118*sin(\t r)});
\draw [shift={(2.055863967113083,5.654233019794038)},line width=1.pt]  plot[domain=1.1693987137032824:1.5108199633960897,variable=\t]({1.*4.293783219123103*cos(\t r)+0.*4.293783219123103*sin(\t r)},{0.*4.293783219123103*cos(\t r)+-1.*4.293783219123103*sin(\t r)});
\draw [->,line width=1.pt] (1.7136307581352108,1.3741102688094502) -- (2.3132351040004338,1.3681702075244289);
\draw [shift={(2.055863967113083,1.654233019794038)},line width=1.pt]  plot[domain=1.1693987137032824:1.5108199633960897,variable=\t]({1.*4.293783219123103*cos(\t r)+0.*4.293783219123103*sin(\t r)},{0.*4.293783219123103*cos(\t r)+-1.*4.293783219123103*sin(\t r)});
\draw [shift={(2.0558639671130794,1.6542330197940522)},line width=1.pt]  plot[domain=1.6505853116845957:1.9920065613774014,variable=\t]({1.*4.293783219123118*cos(\t r)+0.*4.293783219123118*sin(\t r)},{0.*4.293783219123118*cos(\t r)+-1.*4.293783219123118*sin(\t r)});
\draw [->,line width=1.pt] (1.7136307581352108,-2.62588973119055) -- (2.3132351040004338,-2.631829792475571);
\draw [->,line width=1.pt] (6.694767196028232,2.3284690422154872) -- (6.685958410775506,1.7289116059317438);
\draw [shift={(2.474227953487658,2.0906343918840906)},line width=1.pt]  plot[domain=0.05629218589618569:0.4147902774633528,variable=\t]({1.*4.227235126974038*cos(\t r)+0.*4.227235126974038*sin(\t r)},{0.*4.227235126974038*cos(\t r)+1.*4.227235126974038*sin(\t r)});
\draw [shift={(2.4742279534875826,2.0906343918840853)},line width=1.pt]  plot[domain=5.839012852187291:6.197510943754451,variable=\t]({1.*4.227235126974116*cos(\t r)+0.*4.227235126974116*sin(\t r)},{0.*4.227235126974116*cos(\t r)+1.*4.227235126974116*sin(\t r)});
\draw [shift={(4.465492327213462,2.057965703610385)},line width=1.pt]  plot[domain=3.210103419652631:3.4973193345667193,variable=\t]({1.*5.044726034097549*cos(\t r)+0.*5.044726034097549*sin(\t r)},{0.*5.044726034097549*cos(\t r)+1.*5.044726034097549*sin(\t r)});
\draw [shift={(4.4654923272134655,2.0579657036103747)},line width=1.pt]  plot[domain=2.803933880230609:3.0911497951446973,variable=\t]({1.*5.044726034097551*cos(\t r)+0.*5.044726034097551*sin(\t r)},{0.*5.044726034097551*cos(\t r)+1.*5.044726034097551*sin(\t r)});
\draw [->,line width=1.pt] (-0.5709176374859224,2.347509183209414) -- (-0.5648644334197765,1.6774770443141405);
\draw [line width=1.pt] (-3.,0.) circle (0.4cm);
\draw [line width=1.pt,fill=gray!30] (-4.,2.) circle (0.4cm);
\draw [line width=1.pt] (-5.,4.) circle (0.4cm);
\draw [line width=1.pt] (-6.,2.) circle (0.4cm);
\draw [->,line width=1.pt] (-5.178885438199986,3.642229123600029) -- (-5.5,3.);
\draw [line width=1.pt] (-5.339442719099994,3.3211145618000124)-- (-5.821114561800018,2.3577708763999645);
\draw [->,line width=1.pt] (-4.821114561800019,3.642229123600038) -- (-4.5,3.);
\draw [line width=1.pt] (-4.66055728090001,3.32111456180002)-- (-4.178885438199983,2.357770876399965);
\draw [->,line width=1.pt] (-3.8211145618000173,1.6422291236000341) -- (-3.5,1.);
\draw [line width=1.pt] (-3.660557280900009,1.3211145618000173)-- (-3.178885438199984,0.35777087639996685);
\draw [->,line width=1.pt] (0.4,2.) -- (1.,2.);
\draw [line width=1.pt] (0.7,2.)-- (1.6,2.);
\draw [->,line width=1.pt] (2.6947671960282316,2.328469042215484) -- (2.685958410775506,1.7289116059317464);
\draw [shift={(-1.525772046512397,2.0906343918840777)},line width=1.pt]  plot[domain=0.05629218589618727:0.41479027746335034,variable=\t]({1.*4.227235126974093*cos(\t r)+0.*4.227235126974093*sin(\t r)},{0.*4.227235126974093*cos(\t r)+1.*4.227235126974093*sin(\t r)});
\draw [shift={(-1.52577204651243,2.090634391884086)},line width=1.pt]  plot[domain=5.839012852187292:6.197510943754452,variable=\t]({1.*4.2272351269741275*cos(\t r)+0.*4.2272351269741275*sin(\t r)},{0.*4.2272351269741275*cos(\t r)+1.*4.2272351269741275*sin(\t r)});

\draw (-5.9,4.3) node[anchor=north] {\scalebox{1.42}{$x_1$}};
\draw (-5.9-1,4.3-2) node[anchor=north] {\scalebox{1.42}{$x_2$}};
\draw (-5.9+1,4.3-2) node[anchor=north] {\scalebox{1.42}{$x_3$}};
\draw (-5.9+1+1,4.3-2-2) node[anchor=north] {\scalebox{1.42}{$x_4$}};

\draw (-1,4.3) node[anchor=north] {\scalebox{1.42}{$x_1$}};
\draw (-1.2,4.3-2) node[anchor=north] {\scalebox{1.42}{$x_2$}};
\draw (-1.2,4.3-2-2) node[anchor=north] {\scalebox{1.42}{$x_3$}};
\draw (-1.2,4.3-2-2-2) node[anchor=north] {\scalebox{1.42}{$x_4$}};
\draw (-1+1,4.3+1.3) node[anchor=north] {\scalebox{1.42}{$x_1$}};
\draw (-1+1+2,4.3+1.3) node[anchor=north] {\scalebox{1.42}{$x_2$}};
\draw (-1+1+2+2,4.3+1.3) node[anchor=north] {\scalebox{1.42}{$x_3$}};
\draw (-1+1+2+2+2,4.3+1.3) node[anchor=north] {\scalebox{1.42}{$x_4$}};
\end{tikzpicture}}
\end{center}
    \caption{On the left: a digraph $V$ with pieces at nodes $x_1$ and $x_2$; on the right, the game ($V\ast V, (x_1,x_2))$.}
    \label{fig:fig2}
\end{figure}

Regarding the outcomes, the definition is as usual. When all options are losing moves, the $\mathscr{P}$revious player wins; when there is at least one winning option, the $\mathscr{N}$ext player wins; when there are no winning moves but at least one option results in a tie, the game is a $\mathscr{D}$raw.\\

\begin{definition}\label{def:outcomes}
If $G$ is an impartial game, then the \emph{outcome} of $G$, denoted by $o(G)$, is determined by
\begin{enumerate}
  \item[\scalebox{0.5}{$\bullet$}] $o(G)=\mathscr{P}$ if $G$ is terminal or if all options $G'$ have outcome $\mathscr{N}$;
  \item[\scalebox{0.5}{$\bullet$}] $o(G)=\mathscr{N}$ if there exists an option $G'$ with outcome $\mathscr{P}$;
  \item[\scalebox{0.5}{$\bullet$}] $o(G)=\mathscr{D}$ if there are no options $G'$ with outcome $\mathscr{P}$, but there exists an option $G'$ with outcome $\mathscr{D}$.
\end{enumerate}
The outcome classes $\mathcal{P},\mathcal{N}$, and $\mathcal{D}$ are the sets of all games with
the respective outcome, so that we write $G\in \mathcal{P}$ when $o(G)=\mathscr{P}$, calling it a $\mathcal{P}$-\emph{position}.
\end{definition}

\subsection{Short impartial games}

\vspace{0.4cm}
The classical restriction and the first to be studied concerns \emph{short impartial games}, corresponding to digraphs without cycles or gray nodes \citep{Bou02,Spra35,Gru39}. {\sc nim} \citep{Bou02} and {\sc wythoff nim} \citep{Wyt07} are examples of real-life short impartial games, as long as they are played with a finite number of stones. Given such a digraph $V$, the algorithm for determining the Grundy values of the games $G=(V,x)$ makes use of the \emph{mex rule} (minimum excluded value), which is a set function that returns the smallest nonnegative integer that does not belong to the set
(Algorithm \ref{alg:Gshort}, exemplified in Figure \ref{fig:fig3}). The mex rule is used in all theories analyzed in this document.

\begin{algorithm}[htb!]
\caption{Grundy values: short impartial games}\label{alg:Gshort}
\begin{algorithmic}

\Require finite digraph $V$ without cycles or gray nodes\\
\State \textbf{Step 0:}\Comment{Initialization}\\

\vspace{-0.3cm}
\If{$G=(V,x)$ is a game such that $x$ is terminal} $\mathcal{G}_0(G)=0$
\Else{} $\mathcal{G}_0(G)=\infty$
\EndIf\\

\State \textbf{Step n+1:}\\

\vspace{-0.3cm}
\If{all options of $G=(V,x)$ have a finite Grundy value assigned, i.e., if $\mathcal{G}_{n}(G')<\infty$ for every option $G'$} $\mathcal{G}_{n+1}(G)=\mex\{\mathcal{G}_{n}(G')\}$
\EndIf\\

\vspace{-0.3cm}
\If{$\mathcal{G}_{n+1}(G)=\mathcal{G}_{n}(G)$ for every node $x$} \textbf{End}
\EndIf \Comment{Ending condition}\\\\

\Return the Grundy value of each game $G=(V,x)$ is $\mathcal{G}(G)=\mathcal{G}_{t}(G)$, where step $t$ is the final step
\end{algorithmic}
\end{algorithm}

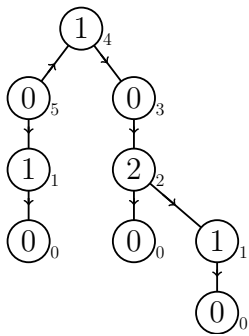
\begin{figure}[htb!]
\begin{center}
\scalebox{0.7}{
\begin{tikzpicture}
\clip(-2.5,-1.8) rectangle (2.4,4.42);
\draw [line width=1.pt] (-1.,4.) circle (0.4cm);
\draw [line width=1.pt] (-2.,2.68) circle (0.4cm);
\draw [line width=1.pt] (0.,2.68) circle (0.4cm);
\draw [->,line width=1.pt] (-0.7584569248018114,3.6811631407383913) -- (-0.5,3.34);
\draw [line width=1.pt] (-0.6292284624009042,3.510581570369194)-- (-0.24154307519818285,2.9988368592616017);
\draw [->,line width=1.pt] (-1.7584569248018158,2.9988368592616035) -- (-1.5,3.34);
\draw [line width=1.pt] (-1.6292284624009088,3.1694184296308006)-- (-1.2415430751981877,3.6811631407383913);
\draw [line width=1.pt] (-2.,1.32) circle (0.4cm);
\draw [line width=1.pt] (0.,1.32) circle (0.4cm);
\draw [line width=1.pt] (-2.,-0.04) circle (0.4cm);
\draw [line width=1.pt] (0.,-0.04) circle (0.4cm);
\draw [line width=1.pt] (1.58,-0.04) circle (0.4cm);
\draw [line width=1.pt] (1.58,-1.4) circle (0.4cm);
\draw [->,line width=1.pt] (-2.,2.28) -- (-2.,2.);
\draw [line width=1.pt] (-2.,2.14)-- (-2.,1.72);
\draw [->,line width=1.pt] (-2.,0.92) -- (-2.,0.64);
\draw [line width=1.pt] (-2.,0.78)-- (-2.,0.36);
\draw [->,line width=1.pt] (0.,2.28) -- (0.,2.);
\draw [line width=1.pt] (0.,2.14)-- (0.,1.72);
\draw [->,line width=1.pt] (0.,0.92) -- (0.,0.64);
\draw [line width=1.pt] (0.,0.78)-- (0.,0.36);
\draw [->,line width=1.pt] (0.30316022260619674,1.0590519602883373) -- (0.79,0.64);
\draw [line width=1.pt] (0.5465801113030977,0.8495259801441692)-- (1.2768397773938007,0.2209480397116652);
\draw [->,line width=1.pt] (1.58,-0.44) -- (1.58,-0.72);
\draw [line width=1.pt] (1.58,-0.58)-- (1.58,-1.);
\draw (-2.,0.24+0.07) node[anchor=north] {\scalebox{1.42}{$0$}};
\draw (0.,0.24+0.07) node[anchor=north] {\scalebox{1.42}{$0$}};
\draw (1.58,-1.12+0.07) node[anchor=north] {\scalebox{1.42}{$0$}};
\draw (-2.,1.6+0.07) node[anchor=north] {\scalebox{1.42}{$1$}};
\draw (1.58,0.24+0.07) node[anchor=north] {\scalebox{1.42}{$1$}};
\draw (0.,1.6+0.07) node[anchor=north] {\scalebox{1.42}{$2$}};
\draw (0.,2.96+0.07) node[anchor=north] {\scalebox{1.42}{$0$}};
\draw (-2.,2.96+0.07) node[anchor=north] {\scalebox{1.42}{$0$}};
\draw (-1.,4.28+0.07) node[anchor=north] {\scalebox{1.42}{$1$}};

\draw (-2.+0.5,0.24-0.3+0.07) node[anchor=north] {\scalebox{0.8}{$0$}};
\draw (0.+0.5,0.24+0.07-0.3) node[anchor=north] {\scalebox{0.8}{$0$}};
\draw (1.58+0.5,-1.12+0.07-0.3) node[anchor=north] {\scalebox{0.8}{$0$}};
\draw (-2.+0.5,1.6+0.07-0.3) node[anchor=north] {\scalebox{0.8}{$1$}};
\draw (1.58+0.5,0.24+0.07-0.3) node[anchor=north] {\scalebox{0.8}{$1$}};
\draw (0.+0.5,1.6+0.07-0.3) node[anchor=north] {\scalebox{0.8}{$2$}};
\draw (0.+0.5,2.96+0.07-0.3) node[anchor=north] {\scalebox{0.8}{$3$}};
\draw (-2.+0.5,2.96+0.07-0.3) node[anchor=north] {\scalebox{0.8}{$5$}};
\draw (-1.+0.5,4.28+0.07-0.3) node[anchor=north] {\scalebox{0.8}{$4$}};
\end{tikzpicture}}
\end{center}
\vspace{-0.5cm}
    \caption{Example of Algorithm \ref{alg:Gshort} execution.}
    \label{fig:fig3}
\end{figure}

Regarding the disjunctive sum, the fundamental theorem makes use of the {\sc nim}-\emph{sum}, denoted by $\oplus$, which is the bitwise exclusive (or {\sc xor}, or addition without carrying) of the binary representations of the summands (nonnegative integers). For example, $7\oplus 14=9$ ($7=_2 111$ and $14=_2 1110$). The {\sc nim}-sum is used in all theories analyzed in this document.

\begin{theorem}[$\mathcal{G}$-value of a disjunctive sum of short impartial games]\label{th:classicalsum}$\,$\\
If $G=(V,x^\circ)$ and $H=(U,y^\circ)$ are two short impartial games, then\linebreak $\mathcal{G}(G+H)=\mathcal{G}(G)\oplus\mathcal{G}(H)$.
\end{theorem}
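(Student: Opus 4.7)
The plan is to imitate the classical Sprague--Grundy argument, which proceeds by induction on a well-founded measure of $G+H$ and uses the two defining properties of $\mex$. Since short impartial games correspond to finite acyclic digraphs with only white nodes, Algorithm~\ref{alg:Gshort} terminates and assigns a finite Grundy value to every node, so we may induct on the longest directed path out of $(x^\circ,y^\circ)$ in $V\ast U$, or equivalently on $\rank(G)+\rank(H)$. The base case (both $x$ and $y$ terminal) is immediate since $\mathcal{G}(G+H)=0=0\oplus 0$.

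For the inductive step, write $a=\mathcal{G}(G)$, $b=\mathcal{G}(H)$, and $c=a\oplus b$. Using Definitions~\ref{def:cartiproduct} and~\ref{def:disjunctivesum}, and the absence of gray nodes, every option of $G+H$ has the form $G'+H$ (for an option $G'$ of $G$) or $G+H'$ (for an option $H'$ of $H$). To conclude $\mathcal{G}(G+H)=c$ via $\mex$, I would verify the two standard requirements:
\begin{enumerate}
\item[(i)] no option of $G+H$ has Grundy value equal to $c$;
\item[(ii)] for every nonnegative integer $c'<c$, some option of $G+H$ has Grundy value $c'$.
\end{enumerate}

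For (i), assume toward contradiction that $\mathcal{G}(G'+H)=c$ for some option $G'$ of $G$. By the induction hypothesis $\mathcal{G}(G')\oplus b=c=a\oplus b$, so $\mathcal{G}(G')=a$; but $a=\mex\{\mathcal{G}(G')\}$ forbids any option of $G$ from carrying Grundy value $a$. The symmetric case of $G+H'$ is identical. For (ii), use the classical bit-flipping trick: given $c'<c$, let $k$ be the position of the most significant bit where $c'$ and $c$ disagree; this bit is $1$ in $c$, so it is $1$ in exactly one of $a,b$, say in $a$. Then $a':=a\oplus c\oplus c'$ satisfies $a'<a$ (bit $k$ of $a$ is flipped from $1$ to $0$, while only lower bits change), so by the $\mex$ definition of $a$ there is an option $G'$ of $G$ with $\mathcal{G}(G')=a'$. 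By induction $\mathcal{G}(G'+H)=a'\oplus b=c'$, as required.

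\paragraph{Main obstacle.} The only nontrivial issue is structural rather than arithmetic: one must confirm that, in the short-game regime, the options of $G+H$ are exactly the games $G'+H$ and $G+H'$ mentioned above, with no gray-node pathology. This is a direct unpacking of Definition~\ref{def:cartiproduct} once one observes that $x^\circ,y^\circ$ and all their descendants are white, so every arc out of $(x^\circ,y^\circ)$ in $V\ast U$ indeed corresponds to a move in exactly one component with the other left untouched. Once this is in hand, the bit-flip argument in~(ii) and the contradiction in~(i) proceed mechanically, and the theorem follows.
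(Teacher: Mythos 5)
The paper does not prove this theorem: it is stated as a classical result of Sprague--Grundy theory, with the surrounding text explicitly deferring proofs of known results to the cited references. Your argument is the standard textbook proof (induction on rank, XOR-cancellation for the exclusion condition, and the most-significant-differing-bit trick for the attainment condition), it is correct, and your structural check that the options of $G+H$ are exactly the games $G'+H$ and $G+H'$ is indeed the only point where the paper's digraph/cartproduct formalism needs unpacking.
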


Regarding outcomes, it is worth noting that, in the absence of cycles, the possibility of a game ending in a tie ceases to exist. The outcome classes are $\mathcal{P}$ and $\mathcal{N}$, defined by the first two items of Definition~\ref{def:outcomes}. The link between the Grundy value of a game and its outcome can be described as follows.

\begin{theorem}[Outcome of a short impartial game with a given $\mathcal{G}$-value]\label{th:classicaloutcome} $\,$\\
If $G=(V,x^\circ)$ is a short impartial game, then
\begin{enumerate}
  \item[\scalebox{0.5}{$\bullet$}] $o(G)=\mathscr{P}$ if and only if $\mathcal{G}(G)=0$;
  \item[\scalebox{0.5}{$\bullet$}] $o(G)=\mathscr{N}$ if and only if $\mathcal{G}(G)\neq 0$.
\end{enumerate}
\end{theorem}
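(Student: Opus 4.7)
The plan is to prove both directions simultaneously by structural induction on the game digraph, exploiting the fact that short impartial games correspond to finite acyclic digraphs, so every node has a well-defined rank (length of the longest directed path starting from it), allowing induction on this rank.

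For the base case, I would handle terminal positions: if $x$ is terminal, then Algorithm \ref{alg:Gshort} assigns $\mathcal{G}_0(G)=0$ (and this value is preserved in subsequent steps since $G$ has no options), while Definition \ref{def:outcomes} directly yields $o(G)=\mathscr{P}$. So both sides of the equivalence agree at rank $0$.

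For the inductive step, I would assume that the statement holds for every follower $G^\downarrow$ of $G$, and split into two cases based on $\mathcal{G}(G)$. By the algorithm, $\mathcal{G}(G)=\mex\{\mathcal{G}(G')\}$ where $G'$ ranges over the options of $G$. If $\mathcal{G}(G)=0$, then $0\notin\{\mathcal{G}(G')\}$, so every option satisfies $\mathcal{G}(G')\neq 0$; by the induction hypothesis, every option $G'$ is in $\mathcal{N}$, and Definition \ref{def:outcomes} then gives $o(G)=\mathscr{P}$. Conversely, if $\mathcal{G}(G)\neq 0$, the defining property of $\mex$ guarantees that $0\in\{\mathcal{G}(G')\}$, so there exists an option $G'$ with $\mathcal{G}(G')=0$; by the induction hypothesis, this $G'$ is a $\mathscr{P}$-position, and Definition \ref{def:outcomes} yields $o(G)=\mathscr{N}$.

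Combining both directions of the case analysis gives the two biconditionals simultaneously. I do not anticipate any real obstacle here: the crux is the well-known defining property of the $\mex$ function ($0=\mex(S)\Leftrightarrow 0\notin S$), and the induction is well-founded because the underlying digraph is finite and acyclic, so ranks are finite and strictly decrease along any option. One subtle point worth a sentence in the writeup is that the algorithm actually terminates on such digraphs with all Grundy values finite (a quick induction on rank confirms this), so that the quantities $\mathcal{G}(G)$ and $\mathcal{G}(G')$ invoked in the argument are genuine nonnegative integers and the $\mex$ identity applies unambiguously.
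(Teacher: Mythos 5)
Your proof is correct. Note that the paper does not supply its own proof of this statement: it is a classical result of Sprague--Grundy theory, and the introduction explicitly defers proofs of known results to the cited references, so there is nothing to compare against directly. Your argument is the standard one --- induction on the rank of a node in the finite acyclic digraph, together with the observation that $\mex(S)=0$ if and only if $0\notin S$, and the device of proving only the two forward implications and then invoking the exhaustiveness and mutual exclusivity of the hypotheses to obtain the biconditionals --- and that last device is exactly the one the authors use when proving the generalized outcome theorem, \cref{th:cycliccarryoutcome}, so your writeup is fully consistent with the paper's style; the only difference is that there the induction must run on the assignment step of the algorithm rather than on rank, because cycles make rank ill-defined.
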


\subsection{Cyclic impartial games}\label{subsec:cyclic}

\vspace{0.4cm}
If a game digraph without gray nodes has cycles, then we have a \emph{cyclic impartial game} \citep{Smi66,Fra73}. {\sc additional subtraction games} and {\sc adders-and-ladders} \citep{BCG001} are examples of real-life cyclic impartial games. Given such a digraph $V$, the mex rule alone is no longer sufficient; it becomes necessary to apply simultaneously the mex rule and reversibility. Expanding further, during the execution of the algorithm, a finite Grundy value is assigned to $G=(V,x)$ only if the minimum excluded value of the ``already known'' finite Grundy numbers of the options of $G$ is precisely that value, and every other move reverts to a position with that value.

Additionally, after the algorithm finishes, there may be nodes that retain their initial $\infty$, constituting cyclic zones. In regard to these nodes, it is essential to retain information about the possible \emph{exits} from the cyclic zones. This information \emph{can make the difference between tying or winning a game}, as a player may opt to exit a cycle if it leads to victory. Hence, the Grundy values must contain this information, which explains the output of Algorithm~\ref{alg:Gcyclic}.

\begin{algorithm}[htb!]
\caption{Grundy values: cyclic impartial games}\label{alg:Gcyclic}
\begin{algorithmic}

\Require finite digraph $V$ with cycles without gray nodes\\

\vspace{-0.3cm}
\State \textbf{Step 0:}\Comment{Initialization}\\

\vspace{-0.3cm}
\If{$G=(V,x)$ is a game such that $x$ is terminal} $\mathcal{G}_0(G)=0$
\Else{} $\mathcal{G}_0(G)=\infty$
\EndIf\\

\vspace{-0.3cm}
\State \textbf{Step n+1:} for $G=(V,x)$ under analysis, let $m=\mex\{\mathcal{G}_{n}(G')\}$\\

\vspace{-0.3cm}
\If{any option of $G$ whose assigned value is higher than $m$ reverts to $m$, i.e., if for each $G'$ such that $\mathcal{G}_{n}(G')>m$ there is $G''$ with $\mathcal{G}_{n}(G'')=m$} $\mathcal{G}_{n+1}(G)=m$
\EndIf\\

\vspace{-0.3cm}
\If{$\mathcal{G}_{n+1}(G)=\mathcal{G}_{n}(G)$ for every node $x$} \textbf{End}
\EndIf \Comment{Ending condition}\\\\

\Return for each $G=(V,x)$, and the step $t$ being the final step,
\If{$\mathcal{G}_{t}(G)<\infty$} the (non-cyclic) Grundy value of the game $G$ is $\mathcal{G}(G)=\mathcal{G}_{t}(G)$
\Else{} the (cyclic) Grundy value of $G$ is $\mathcal{G}(G)=\infty_{\mathbf{D}}$, where $\mathbf{D}$ is the set $\{a\in\mathbb{Z}_0^{+}:\mathcal{G}(G')=a\text{ for some option }G'\}$
\EndIf
\end{algorithmic}
\end{algorithm}

\vspace{-0.3cm}
As for the disjunctive sums, when there are two or more pieces in cyclic zones, it is impossible to exit, as a player cannot make more than one move in their turn. However, if there is only one piece in a cyclic zone, a ``winning exit'' is a possibility. In this document, if $m\in \mathbb{Z}_0^{+}$ and $S$ is a set, then $m\oplus S=\{m\oplus s\,:\,s\in S\}$. It is clear that if $S$ is empty, $m\oplus S$ is empty as well. The same logic applies to $S\oplus S'$, where $S$ and $S'$ are two sets\footnote{Here, $\mathbb{Z}_0^{+}$ denotes the nonnegative integers.}.

\begin{figure}[htb!]
\begin{center}
\scalebox{0.7}{
\begin{tikzpicture}
\clip(-4.5,-1.9) rectangle (2.5,4.5);
\draw [line width=3.pt] (-1.,4.) circle (0.4cm);
\draw [line width=3.pt] (-2.,2.68) circle (0.4cm);
\draw [line width=3.pt] (0.,2.68) circle (0.4cm);
\draw [->,line width=1.pt] (-0.7584569248018114,3.6811631407383913) -- (-0.5,3.34);
\draw [line width=1.pt] (-0.6292284624009042,3.510581570369194)-- (-0.24154307519818285,2.9988368592616017);
\draw [->,line width=1.pt] (-1.7584569248018158,2.9988368592616035) -- (-1.5,3.34);
\draw [line width=1.pt] (-1.6292284624009088,3.1694184296308006)-- (-1.2415430751981877,3.6811631407383913);
\draw [line width=1.pt] (-2.,1.32) circle (0.4cm);
\draw [line width=1.pt] (0.,1.32) circle (0.4cm);
\draw [line width=1.pt] (-2.,-0.04) circle (0.4cm);
\draw [line width=1.pt] (0.,-0.04) circle (0.4cm);
\draw [line width=1.pt] (1.58,-0.04) circle (0.4cm);
\draw [line width=1.pt] (1.58,-1.4) circle (0.4cm);
\draw [->,line width=1.pt] (-2.,2.28) -- (-2.,2.);
\draw [line width=1.pt] (-2.,2.14)-- (-2.,1.72);
\draw [->,line width=1.pt] (-2.,0.92) -- (-2.,0.64);
\draw [line width=1.pt] (-2.,0.78)-- (-2.,0.36);
\draw [->,line width=1.pt] (0.,2.28) -- (0.,2.);
\draw [line width=1.pt] (0.,2.14)-- (0.,1.72);
\draw [->,line width=1.pt] (0.,0.92) -- (0.,0.64);
\draw [line width=1.pt] (0.,0.78)-- (0.,0.36);
\draw [->,line width=1.pt] (0.30316022260619674,1.0590519602883373) -- (0.79,0.64);
\draw [line width=1.pt] (0.5465801113030977,0.8495259801441692)-- (1.2768397773938007,0.2209480397116652);
\draw [->,line width=1.pt] (1.58,-0.44) -- (1.58,-0.72);
\draw [line width=1.pt] (1.58,-0.58)-- (1.58,-1.);
\draw (-2.,0.24+0.07) node[anchor=north] {\scalebox{1.42}{$0$}};
\draw (0.,0.24+0.07) node[anchor=north] {\scalebox{1.42}{$0$}};
\draw (1.58,-1.12+0.07) node[anchor=north] {\scalebox{1.42}{$0$}};
\draw (-2.,1.6+0.07) node[anchor=north] {\scalebox{1.42}{$1$}};
\draw (1.58,0.24+0.07) node[anchor=north] {\scalebox{1.42}{$1$}};
\draw (0.,1.6+0.07) node[anchor=north] {\scalebox{1.42}{$2$}};
\draw (-2.,2.96-0.04) node[anchor=north] {\scalebox{1}{$\infty$}};
\draw (0.,2.96-0.04) node[anchor=north] {\scalebox{1}{$\infty$}};
\draw (-1.,4.28-0.04) node[anchor=north] {\scalebox{1}{$\infty$}};

\draw (-2.+0.5,0.24-0.3+0.07) node[anchor=north] {\scalebox{0.8}{$0$}};
\draw (0.+0.5,0.24+0.07-0.3) node[anchor=north] {\scalebox{0.8}{$0$}};
\draw (1.58+0.5,-1.12+0.07-0.3) node[anchor=north] {\scalebox{0.8}{$0$}};
\draw (-2.+0.5,1.6+0.07-0.3) node[anchor=north] {\scalebox{0.8}{$1$}};
\draw (1.58+0.5,0.24+0.07-0.3) node[anchor=north] {\scalebox{0.8}{$1$}};
\draw (0.+0.5,1.6+0.07-0.3) node[anchor=north] {\scalebox{0.8}{$2$}};

\draw [->,line width=1.pt] (-0.4,2.68) -- (-1.,2.68);
\draw [line width=1.pt] (-0.7,2.68)-- (-1.6,2.68);

\draw [line width=1.pt] (-4.,1.32) circle (0.4cm);
\draw [->,line width=1.pt] (-3.669229399172189,1.5449240085629117) -- (-3.,2.);
\draw [line width=1.pt] (-3.3346146995860946,1.7724620042814558)-- (-2.3307706008278113,2.4550759914370888);
\draw [->,line width=1.pt] (-3.6692293991721843,1.0950759914370856) -- (-3.,0.64);
\draw [line width=1.pt] (-3.3346146995860915,0.8675379957185423)-- (-2.330770600827812,0.18492400856291247);
\draw (-4.,1.6+0.07) node[anchor=north] {\scalebox{1.42}{$1$}};
\draw (-4.+0.55,1.6+0.07-0.25) node[anchor=north] {\scalebox{0.8}{$2$}};
\end{tikzpicture}}
\scalebox{0.7}{
\begin{tikzpicture}
\clip(-4.5,-1.9) rectangle (2.5,4.5);
\draw [line width=3.pt] (-1.,4.) circle (0.4cm);
\draw [line width=3.pt] (-2.,2.68) circle (0.4cm);
\draw [line width=3.pt] (0.,2.68) circle (0.4cm);
\draw [->,line width=1.pt] (-0.7584569248018114,3.6811631407383913) -- (-0.5,3.34);
\draw [line width=1.pt] (-0.6292284624009042,3.510581570369194)-- (-0.24154307519818285,2.9988368592616017);
\draw [->,line width=1.pt] (-1.7584569248018158,2.9988368592616035) -- (-1.5,3.34);
\draw [line width=1.pt] (-1.6292284624009088,3.1694184296308006)-- (-1.2415430751981877,3.6811631407383913);
\draw [line width=1.pt] (-2.,1.32) circle (0.4cm);
\draw [line width=1.pt] (0.,1.32) circle (0.4cm);
\draw [line width=1.pt] (-2.,-0.04) circle (0.4cm);
\draw [line width=1.pt] (0.,-0.04) circle (0.4cm);
\draw [line width=1.pt] (1.58,-0.04) circle (0.4cm);
\draw [line width=1.pt] (1.58,-1.4) circle (0.4cm);
\draw [->,line width=1.pt] (-2.,2.28) -- (-2.,2.);
\draw [line width=1.pt] (-2.,2.14)-- (-2.,1.72);
\draw [->,line width=1.pt] (-2.,0.92) -- (-2.,0.64);
\draw [line width=1.pt] (-2.,0.78)-- (-2.,0.36);
\draw [->,line width=1.pt] (0.,2.28) -- (0.,2.);
\draw [line width=1.pt] (0.,2.14)-- (0.,1.72);
\draw [->,line width=1.pt] (0.,0.92) -- (0.,0.64);
\draw [line width=1.pt] (0.,0.78)-- (0.,0.36);
\draw [->,line width=1.pt] (0.30316022260619674,1.0590519602883373) -- (0.79,0.64);
\draw [line width=1.pt] (0.5465801113030977,0.8495259801441692)-- (1.2768397773938007,0.2209480397116652);
\draw [->,line width=1.pt] (1.58,-0.44) -- (1.58,-0.72);
\draw [line width=1.pt] (1.58,-0.58)-- (1.58,-1.);
\draw (-2.,0.24+0.07) node[anchor=north] {\scalebox{1.42}{$0$}};
\draw (0.,0.24+0.07) node[anchor=north] {\scalebox{1.42}{$0$}};
\draw (1.58,-1.12+0.07) node[anchor=north] {\scalebox{1.42}{$0$}};
\draw (-2.,1.6+0.07) node[anchor=north] {\scalebox{1.42}{$1$}};
\draw (1.58,0.24+0.07) node[anchor=north] {\scalebox{1.42}{$1$}};
\draw (0.,1.6+0.07) node[anchor=north] {\scalebox{1.42}{$2$}};
\draw (-2.,2.96-0.04) node[anchor=north] {\scalebox{1}{$\infty_1$}};
\draw (0.,2.96-0.04) node[anchor=north] {\scalebox{1}{$\infty_2$}};
\draw (-1.,4.28-0.04) node[anchor=north] {\scalebox{1}{$\infty$}};

\draw [->,line width=1.pt] (-0.4,2.68) -- (-1.,2.68);
\draw [line width=1.pt] (-0.7,2.68)-- (-1.6,2.68);

\draw [line width=1.pt] (-4.,1.32) circle (0.4cm);
\draw [->,line width=1.pt] (-3.669229399172189,1.5449240085629117) -- (-3.,2.);
\draw [line width=1.pt] (-3.3346146995860946,1.7724620042814558)-- (-2.3307706008278113,2.4550759914370888);
\draw [->,line width=1.pt] (-3.6692293991721843,1.0950759914370856) -- (-3.,0.64);
\draw [line width=1.pt] (-3.3346146995860915,0.8675379957185423)-- (-2.330770600827812,0.18492400856291247);
\draw (-4.,1.6+0.07) node[anchor=north] {\scalebox{1.42}{$1$}};
\end{tikzpicture}}
\end{center}
\vspace{-0.5cm}
    \caption{Example of Algorithm \ref{alg:Gcyclic} execution.
On the left, the state after the ending condition is satisfied (third step); on the right, the assigned $\mathcal{G}$-values.}
    \label{fig:fig4}
\end{figure}
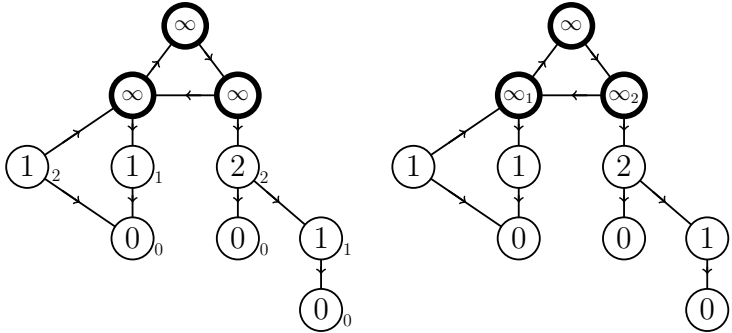

\begin{theorem}[$\mathcal{G}$-value of a disjunctive sum of cyclic impartial games]\label{th:cyclicsum} If $G=(V,x^\circ)$ and $H=(U,y^\circ)$ are two cyclic impartial games, then $$\mathcal{G}(G+H)=\begin{cases}
m\oplus k \quad\quad\,\,\,\,\textrm{ if }\mathcal{G}(G)=m<\infty\textrm{ and }\mathcal{G}(H)=k<\infty;\\
\infty_{m\oplus\mathbf{D}} \quad\quad\,\,\,\,\textrm{ if }\mathcal{G}(G)=m<\infty\textrm{ and }\mathcal{G}(H)=\infty_{\mathbf{D}};\\
\infty \quad\quad\quad\quad\,\,\textrm{ otherwise.}
\end{cases}$$
\end{theorem}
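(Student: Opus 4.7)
The plan is to verify that the formula given in the statement defines a fixed point of the step rule of Algorithm \ref{alg:Gcyclic} applied to the cartproduct $V\ast U$, and then invoke the convergence of that algorithm. I would induct on a step-based rank: for games whose value is finite, the rank is the step at which the finite value is first assigned; games that remain at $\infty$ are placed at the top of this order. Under this ordering every option of $G+H$ is ``smaller'' than $G+H$ in the sense needed to apply the inductive hypothesis to $G'+H$ and $G+H'$.

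In Case 1, both $\mathcal{G}(G)=m$ and $\mathcal{G}(H)=k$ are finite. The argument mirrors the classical Sprague--Grundy theorem, with the extra reversibility clause of Algorithm \ref{alg:Gcyclic}. By induction, options $G'+H$ with $\mathcal{G}(G')$ finite have value $\mathcal{G}(G')\oplus k$, and similarly for $G+H'$; options with cyclic $G'$ fall under Case 2 and so have cyclic values. The usual Nim-sum counting shows that the finite-valued options realize every value in $\{0,1,\ldots,m\oplus k-1\}$ and miss $m\oplus k$, giving $\mex=m\oplus k$. Reversibility: if $G'+H$ has value exceeding $m\oplus k$, then $\mathcal{G}(G')>m$ (finite or cyclic) and, because $G$ was itself assigned $m$, there is some option $G''$ of $G'$ with $\mathcal{G}(G'')=m$; by induction $G''+H$ has value $m\oplus k$. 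The symmetric argument handles $G+H'$.

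In Case 2, with $\mathcal{G}(G)=m$ finite and $\mathcal{G}(H)=\infty_{\mathbf{D}}$, I must show two things: that $G+H$ is never assigned a finite value by the algorithm, and that its set of finitely-valued direct options is exactly $m\oplus\mathbf{D}$. The second is immediate by induction: the options $G+H'$ where $H'$ is a finite-valued exit of $H$ (with $\mathcal{G}(H')=d\in\mathbf{D}$) have value $m\oplus d$ by Case 1, while all remaining options are cyclic by Cases 2 and 3. For the first, suppose for contradiction that at some step $n+1$ the value $v$ gets assigned to $G+H$: then $v=\mex$ of option values, and every cyclic option reverses to a position with value $v$. But one such cyclic option is $G+H^{\downarrow}$ for a follower $H^{\downarrow}$ lying inside the cyclic zone of $H$ with $H^{\downarrow}$ never exited; by the induction hypothesis applied to $G+H^{\downarrow}$, all its options are again cyclic or of the form $m\oplus d$, and an easy descent through the cyclic zone of $H$ shows the reversion demanded for $v$ cannot be produced in finitely many steps --- contradicting that $v$ was assigned. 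Case 3 is then automatic: all options of $G+H$ are covered by Cases 2 or 3 by induction, hence none is finite, the exit set is empty, and $\mathcal{G}(G+H)=\infty_\emptyset=\infty$.

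I expect the main obstacle to be the joint induction across the three cases, particularly the negative assertion in Case 2 that no finite value ever stabilises at $G+H$. The delicate point is to pinpoint a follower inside the cyclic zone of $H$ that forever blocks the reversibility check; this relies on the structural fact that $H$'s cyclic zone is closed under the moves that keep it cyclic (any cyclic option of $H$ from within the zone is itself cyclic), an observation that carries over to $V\ast U$ through the induction and is what ultimately separates SFPT from SGT.
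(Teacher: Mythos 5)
First, a point of reference: the paper itself contains no proof of Theorem~\ref{th:cyclicsum}; it is quoted as a known result of Smith and Fraenkel--Perl, and Section~1 explicitly states that proofs of known results are omitted. Your proposal therefore has to stand on its own, and while the overall strategy (verify the mex-plus-reversibility conditions of Algorithm~\ref{alg:Gcyclic} on $V\ast U$ by induction on the assignment step) is a reasonable route, two steps do not hold as written. In Case~1 you claim that if an option $G'+H$ has value exceeding $m\oplus k$ then $\mathcal{G}(G')>m$, and you locate the reverting move inside the $G$-component. The nim-sum is not monotone: take $m=2$, $k=3$, $\mathcal{G}(G')=1$; then $\mathcal{G}(G'+H)=1\oplus 3=2>1=m\oplus k$ although $\mathcal{G}(G')=1<m$. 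In that situation $G'$ need not have any option of value $m$, and the reversion must instead be found in the other component: setting $h'=\mathcal{G}(G')\oplus m\oplus k$, a leading-bit comparison shows $h'<k$, so the mex property of $H$ supplies an option $H'$ with $\mathcal{G}(H')=h'$ and $G'+H'$ has value $m\oplus k$. This is repairable, but the case split you give is wrong and the repair is exactly the part of the argument that needs to be written out.

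The more serious gap is the negative assertion in Case~2, which you flag as delicate but do not actually establish. The value that must be shown unassignable at $G+H$ is $v=\mex(m\oplus\mathbf{D})$, and this is in general not $m\oplus\mex(\mathbf{D})$ (for $m=2$ and $\mathbf{D}=\{0,3\}$ one gets $v=0$ while $m\oplus\mex(\mathbf{D})=3$); consequently the follower of $H$ that witnesses the failure of $H$ to be assigned $\mex(\mathbf{D})$ does not directly witness the failure of $G+H$ to be assigned $v$. Moreover, an unassigned option $G'+H$ or $G+H'$ of the sum can a priori revert to $v$ through either component ($G'+H'$ with $\mathcal{G}(G')\oplus\mathcal{G}(H')=v$, or $G+H''$ with $m\oplus\mathcal{G}(H'')=v$), so ruling out a premature finite assignment requires a genuine argument, not ``an easy descent through the cyclic zone.'' This is precisely the point where the classical treatment attaches a counter function to the reversibility condition (or argues semantically about the outcome of $G+H+*v$); without one of these devices the fixed-point verification does not close.
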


Regarding outcomes, the potential existence of cycles allows ties to occur, making all three outcomes from Definition~\ref{def:outcomes} possible. A tie only occurs when the Grundy value of a game $G=(V,x)$ is $\mathcal{G}(G)=\infty_{\mathbf{D}}$ and $0\not \in \mathbf{D}$. This is because $x$ is in a cyclic zone and there is no winning exit.\\

\begin{theorem}[Outcome of a cyclic impartial game with a given $\mathcal{G}$-value]\label{th:cyclicoutcome} If $G=(V,x^\circ)$ is a cyclic impartial game, then
\begin{enumerate}
  \item[\scalebox{0.5}{$\bullet$}] $o(G)=\mathscr{P}$ if and only if $\mathcal{G}(G)=0$;
  \item[\scalebox{0.5}{$\bullet$}] $o(G)=\mathscr{N}$ if and only if $\mathcal{G}(G)\in\mathbb{Z}^{+}$ or $\mathcal{G}(G)=\infty_\mathbf{D}$ where $0\in\mathbf{D}$;
  \item[\scalebox{0.5}{$\bullet$}] $o(G)=\mathscr{D}$ if and only if $\mathcal{G}(G)=\infty_{\mathbf{D}}$ and $0\not\in\mathbf{D}$.
\end{enumerate}
\end{theorem}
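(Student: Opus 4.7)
The plan is to prove each of the three items as a forward implication---the Grundy-value condition implies the claimed outcome---and then observe that since the Grundy values partition the cyclic impartial games and so do the three outcomes $\{\mathscr{P},\mathscr{N},\mathscr{D}\}$, the converse implications come for free.

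First I would establish that $\mathcal{G}(G)=0$ implies $o(G)=\mathscr{P}$ by induction on the step $t$ at which $0$ is assigned to $G$ by Algorithm~\ref{alg:Gcyclic}. The base case $t=0$ is immediate since $G$ is terminal. For $t>0$, the $\mex$ condition guarantees that no option of $G$ has value $0$, while the reversibility clause forces every option $G'$ with value above $0$---including options still labelled $\infty$ at step $t-1$---to possess an option $G''$ with $\mathcal{G}(G'')=0$ already assigned at a strictly earlier step. Previous's winning strategy is then to respond to any move $G\to G'$ with such a $G''$: by the induction hypothesis $G''$ is $\mathscr{P}$, and since each round strictly decreases the step index at which a value-$0$ position was recognized, the play must terminate with Next unable to move.

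For $\mathcal{G}(G)\in\mathbb{Z}^+$ or $\mathcal{G}(G)=\infty_{\mathbf{D}}$ with $0\in\mathbf{D}$, either the $\mex$ condition or the defining formula for $\mathbf{D}$ yields an option $G'$ with $\mathcal{G}(G')=0$; by the previous step $o(G')=\mathscr{P}$, so $o(G)=\mathscr{N}$.

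The remaining case $\mathcal{G}(G)=\infty_{\mathbf{D}}$ with $0\notin\mathbf{D}$ is where I expect the real work to lie. No option of $G$ has Grundy value $0$; once I prove the converse $o(H)=\mathscr{P}\Rightarrow\mathcal{G}(H)=0$, no option of $G$ can be $\mathscr{P}$, so $o(G)\neq\mathscr{N}$, and combined with $\mathcal{G}(G)\neq 0$ forcing $o(G)\neq\mathscr{P}$ this leaves $o(G)=\mathscr{D}$. To prove that converse, I would assign to every $\mathscr{P}$-position $H$ the ordinal $\sigma(H)$ equal to the worst-case number of plies in Previous's optimal winning play---finite because Previous can force Next to run out of moves---and induct on $\sigma(H)$. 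Every option $H'$ of a $\mathscr{P}$-position is $\mathscr{N}$, so by the contrapositive of the first step $\mathcal{G}(H')\neq 0$, which makes the $\mex$ of options' values of $H$ equal to $0$; moreover each such $H'$ admits a $\mathscr{P}$-reply $H''$ of strictly smaller $\sigma$-rank, so $\mathcal{G}(H'')=0$ by the inductive hypothesis, supplying exactly the reversibility witness that the algorithm needs to assign $\mathcal{G}(H)=0$.
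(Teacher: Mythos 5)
Your proposal is correct, but note that the paper does not actually prove Theorem~\ref{th:cyclicoutcome}: it is quoted as a known result of Smith and Fraenkel--Perl, the introduction stating explicitly that proofs of known results are omitted. The natural comparison is therefore with the paper's proof of the analogous statement for the unified theory, Theorem~\ref{th:cycliccarryoutcome}. Your skeleton coincides with that proof on two of the three cases: the same partition argument reduces the three equivalences to three forward implications, the $\mathscr{P}$ case is an induction on the assignment step of the algorithm exploiting the reversibility clause, and the $\mathscr{N}$ case reads off a value-$0$ option from the mex condition or from $\mathbf{D}$. Where you genuinely diverge is the draw case. The paper argues by play: since not every option can be an $\mathcal{N}$-position (otherwise $G$ would be a $\mathcal{P}$-position and hence of value $0$), the next player always has a move to another position of the form $\infty_{\mathbf{D}'}$ with $0\notin\mathbf{D}'$, and this ``saving move'' can be repeated forever. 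You instead prove the converse lemma $o(H)=\mathscr{P}\Rightarrow\mathcal{G}(H)=0$ by induction on a winning rank $\sigma(H)$ and conclude by elimination. Your route has the merit of isolating, as a standalone lemma, exactly the fact that the paper's play-based argument silently invokes inside the third forward implication (thereby removing an apparent circularity), at the cost of not exhibiting the drawing line of play. Two points deserve a sentence each in a polished write-up: the finiteness of $\sigma(H)$ is the standard attractor fact for reachability on a finite digraph (equivalently, the $\mathscr{P}$/$\mathscr{N}$ classification is a least fixed point, which stratifies $\mathscr{P}$-positions by rank); and at the last step you should say why the algorithm actually assigns $0$ to $H$ rather than merely being permitted to --- no option of $H$ ever receives value $0$, so the mex at $H$ equals $0$ at every step and $H$ can never be assigned a positive value, while the ending condition of Algorithm~\ref{alg:Gcyclic} cannot be satisfied while $H$ remains unassigned once all its reversibility witnesses are in place.
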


\subsection{Acyclic impartial games with entailing moves}\label{subsec:entailing}

\vspace{0.4cm}
If a game digraph without cycles has gray nodes, then we have an \emph{acyclic impartial game with entailing moves} \citep{Lar021}. {\sc nimstring} \citep{Ber000} and {\sc top entails} \citep{BCG001} are examples of real-life games of this type. As in the case of cyclic games, the mex rule alone is no longer sufficient to determine the Grundy values of the positions. However, the reason for this lies not in the occurrence of cycles, but rather in the occurrence of forcing sequences. Regarding short games, and using the ordinary nimbers, when a player has a winning move in $G+*m$, it follows that $G=*k\neq *m$, and the winning option is to $*k+*k$ or to $*m+*m$, depending on whether $m>k$ or $k>m$. When entailing moves come into play, there is a third possibility: the winning strategy may be a sequence of entailing moves initiated from $G$, forcing that, at a given moment, one of two things occur in that component; either an opponent's move to a nimber other than $*m$ or a player's move to $*m+*m$. When this type of strategy exists, we say that there is \emph{protection} against the presence of $*m$ in the disjunctive sum (or, for short, just ``against $m$''), meaning that there is a forcing sequence from $G$ that guarantees a win in $G+*m$. The set containing all the non-harmful integers $m$ under these conditions is denoted by $\mathbf{P}(G)$. Thus, in the case of acyclic impartial games with entailing moves, the mex rule should be applied not only to $\{\mathcal{G}(G'): G'=(V,{x'}^\circ)\}$, but to the union of this set with $\mathbf{P}(G)$.

\begin{definition}\label{def:protected}
Let $G$ be a game as defined in Definition \ref{def:cycliccarrygame}. The \emph{protected values} in $G$, $\mathbf{P}(G)$, are determined in the following away:
\begin{enumerate}
  \item [\scalebox{0.7}{$\bullet$}] If $G$ has a terminal gray option $G'=(V,{x'}^{\textcolor{gray}{\bullet}})$, then $\mathbf{P}(G)=\mathbb{Z}_0^{+}$;
  \item [\scalebox{0.7}{$\bullet$}] Otherwise, $m\in \mathbf{P}(G)$ if $G$ has a nonterminal gray option $G'=(V,{x'}^{\textcolor{gray}{\bullet}})$ such that, for every $G''$, at least one of the following items is satisfied:
      \begin{enumerate}
        \item [\scalebox{0.5}{$\bullet$}] $\mathcal{G}(G'')=k<\infty$ and $k\oplus m\neq 0$;
        \item [\scalebox{0.5}{$\bullet$}] there is a $G'''$ such that $\mathcal{G}(G''')=m$;
        \item [\scalebox{0.5}{$\bullet$}] $m\in \mathbf{P}(G'')$.
      \end{enumerate}
\end{enumerate}
\end{definition}

Consider the game $G=(V,x)$ in Figure \ref{fig:fig5}, where $x$ is the node where a single piece is located. For example, there is protection against the presence of $*5$. In $G+*5$, the first move is the entailing option. If the response is to the node whose Grundy value is $1$, the first subitem of the second item is satisfied. If the response is to the unlabeled node, there is again protection against the presence of $*5$ (via the second entailing move), and the third subitem of the second item is satisfied. As another example, observe also that there is protection against the presence of $0$ ($G$ played in isolation). In $G$, the first move is again the entailing option. If the response is to the node whose Grundy value is $1$, the first subitem of the second item is again satisfied. If the response is to the unlabeled node, it follows a move to the node whose Grundy value is $0$, and the second subitem of the second item is satisfied. It is easy to check that $\mathbf{P}(G)=\mathbb{Z}_0^{+}\backslash \{1\}$ and, as we will detail next, $\mathcal{G}(G)=\mex(\{2\}\,\cup\,\mathbb{Z}_0^{+}\backslash \{1\})=1$. The unlabeled white node has a Grundy value specific to this theory, which we will discuss shortly.

\begin{figure}[htb!]
\begin{center}
\scalebox{0.7}{
\begin{tikzpicture}
\clip(-4.7,-3.6) rectangle (5.7,3.6);
\draw (-1,3+0.35) node[anchor=north] {\scalebox{1.4}{$\bm{\circledast}$}};
\draw [line width=1.pt] (-1.,3.) circle (0.4cm);
\draw [line width=1.pt] (-2.,1.) circle (0.4cm);
\draw [line width=1.pt, fill=gray!30] (0.,1.) circle (0.4cm);
\draw [line width=1.pt] (-1.,-1.) circle (0.4cm);
\draw [line width=1.pt] (-2.,-3.) circle (0.4cm);
\draw [line width=1.pt] (1.,-1.) circle (0.4cm);
\draw [line width=1.pt] (0.,-3.) circle (0.4cm);
\draw [line width=1.pt,fill=gray!30] (3.,-2.) circle (0.4cm);
\draw [line width=1.pt] (5.,-3.) circle (0.4cm);
\draw [line width=1.pt] (-3.,-1.) circle (0.4cm);
\draw [line width=1.pt] (-4.,-3.) circle (0.4cm);
\draw [line width=1.pt] (-4.,0.) circle (0.4cm);
\draw [->,line width=1.pt] (-1.1788854381999847,2.642229123600032) -- (-1.5,2.);
\draw [line width=1.pt] (-1.339442719099993,2.3211145618000155)-- (-1.8211145618000169,1.3577708763999665);
\draw [->,line width=1.pt] (-2.3577708763999667,0.8211145618000166) -- (-3.,0.5);
\draw [line width=1.pt] (-2.6788854381999836,0.6605572809000082)-- (-3.642229123600034,0.17888543819998293);
\draw [->,line width=1.pt] (-2.178885438199983,0.6422291236000335) -- (-2.5,0.);
\draw [line width=1.pt] (-2.3394427190999916,0.32111456180001685)-- (-2.8211145618000164,-0.642229123600033);
\draw [->,line width=1.pt] (-3.1788854381999836,-1.3577708763999663) -- (-3.5,-2.);
\draw [line width=1.pt] (-3.339442719099992,-1.6788854381999823)-- (-3.821114561800016,-2.6422291236000297);
\draw [->,line width=1.pt] (-0.8211145618000159,2.642229123600031) -- (-0.5,2.);
\draw [line width=1.pt] (-0.6605572809000078,2.321114561800015)-- (-0.17888543819998326,1.3577708763999663);
\draw [->,line width=1.pt] (-0.17888543819998326,0.6422291236000335) -- (-0.5,0.);
\draw [line width=1.pt] (-0.33944271909999163,0.32111456180001674)-- (-0.8211145618000167,-0.6422291236000335);
\draw [->,line width=1.pt] (-1.1788854381999834,-1.3577708763999665) -- (-1.5,-2.);
\draw [line width=1.pt] (-1.3394427190999916,-1.6788854381999831)-- (-1.8211145618000169,-2.642229123600033);
\draw [->,line width=1.pt] (0.1788854381999832,0.6422291236000337) -- (0.5,0.);
\draw [line width=1.pt] (0.3394427190999916,0.3211145618000169)-- (0.8211145618000169,-0.6422291236000335);
\draw [->,line width=1.pt] (0.8211145618000171,-1.3577708763999667) -- (0.5,-2.);
\draw [line width=1.pt] (0.6605572809000084,-1.6788854381999838)-- (0.17888543819998248,-2.6422291236000355);
\draw [->,line width=1.pt] (1.3577708763999663,-1.1788854381999836) -- (2.,-1.5);
\draw [line width=1.pt] (1.6788854381999831,-1.339442719099992)-- (2.6422291236000337,-1.8211145618000173);
\draw [->,line width=1.pt] (3.3577708763999663,-2.178885438199983) -- (4.,-2.5);
\draw [line width=1.pt] (3.678885438199984,-2.339442719099992)-- (4.642229123600037,-2.8211145618000186);

\draw (-4,0.24+0.1) node[anchor=north] {\scalebox{1.42}{$0$}};
\draw (-2,0.24+0.1+1) node[anchor=north] {\scalebox{1.42}{$2$}};
\draw (-2-1,0.24+0.1+1-2) node[anchor=north] {\scalebox{1.42}{$1$}};
\draw (-2-1-1,0.24+0.1+1-2-2) node[anchor=north] {\scalebox{1.42}{$0$}};
\draw (-2-1-1+2,0.24+0.1+1-2-2) node[anchor=north] {\scalebox{1.42}{$0$}};
\draw (-2-1-1+2+2,0.24+0.1+1-2-2) node[anchor=north] {\scalebox{1.42}{$0$}};
\draw (-2-1-1+2+1,0.24+0.1+1-2) node[anchor=north] {\scalebox{1.42}{$1$}};
\draw (-2-1-1+2+2+5,0.24+0.1+1-2-2) node[anchor=north] {\scalebox{1.42}{$0$}};
\end{tikzpicture}}
\end{center}
\vspace{-0.5cm}
    \caption{A game $G=(V,x)$ whose set of protected values $\mathbf{P}(G)$ is $\mathbb{Z}_0^{+}\backslash \{1\}$.}
    \label{fig:fig5}
\end{figure}
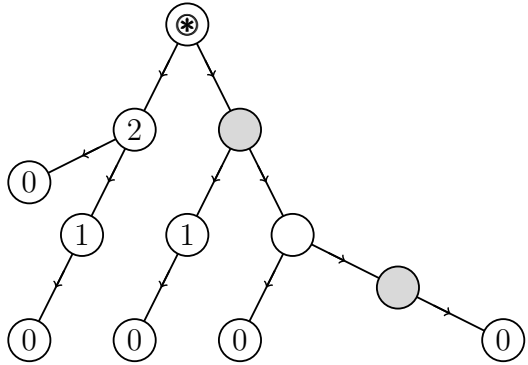

The following function, to be used in step $n+1$, allows to check whether an entailing move ensures protection for the presence of $*k$. Theorem~\ref{def:protected3} ensures that the said function only needs to be used a finite number of times in Algorithm~\ref{alg:Gentailing}. Since Theorem~\ref{def:protected3} is not included in \cite{Lar021}, it is presented here with a proof. The logic of the function is clear. If, when an entailing move is made, there are only responses to white nodes whose Grundy values are different from $k$ and there responses to gray nodes that do not provide protection for the presence of $*k$, then it is a satisfactory move, offering protection for the presence of $*k$.

Afterward, Algorithm \ref{alg:Gentailing} for determining Grundy values is presented (exemplified in Figure \ref{fig:fig6}). White nodes are assigned with Grundy values and gray nodes are assigned with sets of covered values (Definition \ref{def:protected2}). Grundy values can be either nonnegative integers or the special value $\moon\!$ whose presence in a disjunctive sum makes it an $\mathcal{N}$-position. In \cite{Lar021}, $\moon\!\!$ can be a game form like $\{0,\{\infty\,|\,0\}\,|\,0,\{0\,|\,\overline{\infty}\}\}$, with the symbols $\infty$ and $\overline{\infty}$ being used with a different meaning. Here, we are combining theories, and it is more appropriate for $\moon\!\!$ to be considered a Grundy value.

\begin{algorithm}[htb!]
\caption{{\sc boolean} function Cr to evaluate whether an entailing move (a move to a gray node) results in $k$ being a protected value}\label{alg:protected}
\begin{algorithmic}

\Require an integer $k\geqslant 0$ and $G=(V,x^{\textcolor{gray}{\bullet}})$ whose options are previously assigned\\
\If{$x^{\textcolor{gray}{\bullet}}$ is a terminal node} \Return {\sc true} $\rightarrow$ Cr$(G,k)$
\Else{}
\If{for all $G'$, $\mathcal{G}_n(G'=(V,{x'}^\circ))\neq k$ or Cr$(G'=(V,{x'}^{\textcolor{gray}{\bullet}}),k)=${\sc false},}
    \Return {\sc true} $\rightarrow$ Cr$(G,k)$
\Else{} \Return {\sc false} $\rightarrow$ Cr$(G,k)$
\EndIf
\EndIf
\end{algorithmic}
\end{algorithm}

\begin{definition}\label{def:protected2}
Let $G=(V,x^{\textcolor{gray}{\bullet}})$ with the options assigned by step $n$. The set of \emph{covered values} by $G$ is given by $\mathcal{C}_{n+1}(G)=\{k\in \mathbb{Z}_0^{+}\,|\,\text{Cr}(G,k)=${\sc true}$\}$.
\end{definition}

\begin{theorem}\label{def:protected3}
If $G$ is a game whose options are previously assigned, then $\mathcal{C}_{n+1}(G)$ is either finite or infinite of the form of $\mathbb{Z}_0^{+}\backslash F$, where $F$ is finite. We also have the following:
\begin{enumerate}
  \item [\scalebox{0.5}{$\bullet$}] If there is at least one $G'=(V,{x'}^{\textcolor{gray}{\bullet}})$ such that $\mathcal{C}_{n}(G')$ is infinite, then let $l=\max(F_1\cup\ldots\cup F_j)$ where $\mathbb{Z}_0^{+}\backslash F_1,\ldots,\mathbb{Z}_0^{+}\backslash F_j$ are the infinite sets with respect to the gray options of $G$, and we have \emph{Cr}$(G,k)=${\sc false} for all $k>l$.
  \item [\scalebox{0.5}{$\bullet$}] Otherwise, let $l$ be the least nonnegative integer exceeding all integers in
  
  \vspace{-0.6cm}
  $$\cup_{G'=(V,{x'}^{\textcolor{gray}{\bullet}})} \mathcal{C}_{n}(G')\,\,\cup\,\,\{\mathcal{G}_{n}(G')\,|\,G'=(V,{x'}^\circ)\},$$

  \vspace{-0.2cm}
  and we have \emph{Cr}$(G,k)=${\sc true} for all $k\geqslant l$.
\end{enumerate}
\end{theorem}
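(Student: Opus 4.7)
The plan is to unfold the recursive definition of \emph{Cr} into a direct non-recursive characterization of $\mathcal{C}_{n+1}(G)$, and then carry out the dichotomy announced in the statement. The key preliminary observation is that Algorithm \ref{alg:protected} yields, for a nonterminal $x^{\textcolor{gray}{\bullet}}$,
\begin{equation*}
\emph{Cr}(G,k)=\text{\sc true}\quad\Longleftrightarrow\quad k\notin\mathbf{W}(G)\cup\mathbf{Cov}(G),
\end{equation*}
where $\mathbf{W}(G):=\{\mathcal{G}_n(G')\colon G'=(V,{x'}^\circ)\}\cap\mathbb{Z}_0^+$ collects the integer Grundy values of the white options and $\mathbf{Cov}(G):=\bigcup_{G'=(V,{x'}^{\textcolor{gray}{\bullet}})}\mathcal{C}_n(G')$ collects the values covered by the gray options. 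Intersecting with $\mathbb{Z}_0^+$ in $\mathbf{W}(G)$ is harmless because symbolic values such as $\moon$, $\infty$, or $\infty_{\mathbf{D}}$ are by definition distinct from any integer $k$, so the test $\mathcal{G}_n(G')\neq k$ is automatic for them; this disambiguation is the only real bookkeeping subtlety in the proof. The terminal case trivially gives $\mathcal{C}_{n+1}(G)=\mathbb{Z}_0^+$, which is absorbed into the second bullet with $l=0$.

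I then split on whether some gray option of $G$ has an infinite covered set. Under that hypothesis, let $\mathcal{C}_n(G'_1)=\mathbb{Z}_0^+\setminus F_1,\ldots,\mathcal{C}_n(G'_j)=\mathbb{Z}_0^+\setminus F_j$ be the infinite covered sets and set $l=\max(F_1\cup\cdots\cup F_j)$. For every $k>l$ each $F_i$ avoids $k$, so $k\in\mathcal{C}_n(G'_i)\subseteq\mathbf{Cov}(G)$ for every $i$, whence $\emph{Cr}(G,k)=$ \textsc{false}. This is exactly the first bullet and forces $\mathcal{C}_{n+1}(G)\subseteq\{0,\ldots,l\}$, which is finite.

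In the complementary case every $\mathcal{C}_n(G')$ is finite, so $\mathbf{Cov}(G)$ is a finite union of finite sets, and $\mathbf{W}(G)$ is finite because $V$ has finitely many nodes and hence $G$ has finitely many white options. Taking $l$ to be the least nonnegative integer strictly exceeding every element of $\mathbf{W}(G)\cup\mathbf{Cov}(G)$, one has $k\notin\mathbf{W}(G)\cup\mathbf{Cov}(G)$ for all $k\geqslant l$, yielding $\emph{Cr}(G,k)=$ \textsc{true} and forcing $\{l,l+1,\ldots\}\subseteq\mathcal{C}_{n+1}(G)$. Therefore $\mathcal{C}_{n+1}(G)=\mathbb{Z}_0^+\setminus F$ with $F\subseteq\{0,\ldots,l-1\}$, in particular finite, as claimed. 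No step is genuinely hard; the main conceptual obstacle is the initial unfolding of \emph{Cr}, where one must correctly separate the roles of white options (compared via $\mathcal{G}_n$) and gray options (compared via $\mathcal{C}_n$) and must be careful that Grundy values which are not nonnegative integers contribute nothing to $\mathbf{W}(G)$.
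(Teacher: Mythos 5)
Your proof is correct and follows essentially the same route as the paper's: a case split on whether some gray option has an infinite covered set, with the same choice of $l$ in each case and the same induction on the assignment step to write infinite covered sets as $\mathbb{Z}_0^{+}\backslash F_i$. The only difference is presentational — you first unfold Cr into the explicit identity $\mathcal{C}_{n+1}(G)=\mathbb{Z}_0^{+}\setminus(\mathbf{W}(G)\cup\mathbf{Cov}(G))$ and note that non-integer Grundy values contribute nothing, which the paper leaves implicit.
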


\begin{proof}
If $x^{\textcolor{gray}{\bullet}}$ is terminal, we have $\mathcal{C}_{n+1}(G)=\mathbb{Z}_0^{+}$ and the result is true. If the first item is verified then, by induction, $G$ has at least one gray option $G'$ such that $\mathcal{C}_{n}(G')=\mathbb{Z}_0^{+}\backslash F$. Moreover, if $k>l$, then Cr$(G',k)=${\sc true}. Hence, Cr$(G,k)=${\sc false}. In this case, $\mathcal{C}_{n+1}(G)$ is finite, since this set does not contain any element of $\{l+1,l+2,\ldots\}$. If the second item is verified and $k\geqslant l$, then, if $G'=(V,{x'}^\circ)$, we have $\mathcal{G}_{n}(G')\neq k$, and, if $G'=(V,{x'}^{\textcolor{gray}{\bullet}})$, we have Cr$(G',k)=${\sc false}. Thus, Cr$(G,k)=${\sc true} and $\{l,l+1,\ldots\}\subseteq\mathcal{C}_{n+1}(G)$. In this case, we have an infinite set in the described form.
\end{proof}
\clearpage
\begin{algorithm}[htb!]
\caption{Grundy values: acyclic impartial games with entailing moves}\label{alg:Gentailing}
\begin{algorithmic}

\Require acyclic digraph $V$ with gray nodes\\

\vspace{-0.3cm}
\State \textbf{Step 0:}\Comment{Initialization}\\

\vspace{-0.3cm}
\If{$G=(V,x^\circ)$ is a game such that $x^\circ$ is a terminal node} $\mathcal{G}_0(G)=0$
\Else{}, if $x^\circ$ is not terminal, $\mathcal{G}_0(G)=\infty$
\EndIf
\If{$G=(V,x^{\textcolor{gray}{\bullet}})$ is a game such that $x^{\textcolor{gray}{\bullet}}$ is a terminal node} $\mathcal{C}_0(G)=\mathbb{Z}_0^{+}$
\Else{}, if $x^{\textcolor{gray}{\bullet}}$ is not terminal, $\mathcal{C}_0(G)=\infty$
\EndIf\\

\vspace{-0.3cm}
\State \textbf{Step n+1:}\\

\vspace{-0.3cm}
\If{$G=(V,x^\circ)$ is a game whose options are previously assigned}
\State \textbf{let} $D=\{\mathcal{G}_{n}(G'): G'=(V,{x'}^\circ)\}$ and \textbf{let} $P=\cup_{G'=(V,{x'}^{\textcolor{gray}{\bullet}})}\,\mathcal{C}_{n}(G')$
\If{$\mathbb{Z}_0^{+}\subseteq D\cup P$} $\mathcal{G}_{n+1}(G)=\moon$
\Else{} $\mathcal{G}_{n+1}(G)=\mex(D\cup P)$
\EndIf
\EndIf\\

\vspace{-0.3cm}
\If{$G=(V,x^{\textcolor{gray}{\bullet}})$ is a game whose options are previously assigned}
\State \textbf{let} $\mathcal{C}_{n+1}(G)=\emptyset$ and \textbf{let} $k=0$
\If{there is at least one $G'=(V,{x'}^{\textcolor{gray}{\bullet}})$ such that $\mathcal{C}_{n}(G')$ is infinite}
\While{$k\leqslant l$ (Theorem \ref{def:protected3})}
    \If{Cr$(G,k)=${\sc true}} \textbf{Add}$(\mathcal{C}_{n+1}(G),k)$
    \EndIf
\EndWhile
\Else{}
\While{$k\leqslant l$ (Theorem \ref{def:protected3})}
    \If{Cr$(G,k)=${\sc true}} \textbf{Add}$(\mathcal{C}_{n+1}(G),k)$
    \EndIf
\EndWhile
\State \textbf{Append}$(\mathcal{C}_{n+1}(G),\{l+1,l+2,\ldots\})$
\EndIf
\EndIf\\

\vspace{-0.3cm}
\If{$\mathcal{G}_{n+1}(G)=\mathcal{G}_{n}(G)$ and $\mathcal{C}_{n+1}(G)=\mathcal{C}_{n}(G)$, $\forall G=(V,x)$} \textbf{End}
\EndIf \Comment{Ending condition}\\\\

\vspace{-0.3cm}
\Return Given $G=(V,x^\circ)$, the Grundy value of $G$ is $\mathcal{G}(G)=\mathcal{G}_{t}(G)$, where step $t$ is the final step
\end{algorithmic}
\end{algorithm}

\clearpage

\begin{figure}[htb!]
\begin{center}
\scalebox{0.7}{
\begin{tikzpicture}
\clip(-2.5,-1.9) rectangle (2.4,4.5);
\draw [line width=1.pt] (-1.,4.) circle (0.4cm);
\draw [line width=1.pt, fill=gray!30] (-2.,2.68) circle (0.4cm);
\draw [line width=1.pt] (0.,2.68) circle (0.4cm);
\draw [->,line width=1.pt] (-0.7584569248018114,3.6811631407383913) -- (-0.5,3.34);
\draw [line width=1.pt] (-0.6292284624009042,3.510581570369194)-- (-0.24154307519818285,2.9988368592616017);
\draw [->,line width=1.pt] (-1.2415430751981877,3.6811631407383913) -- (-1.6292284624009088,3.34);
\draw [line width=1.pt] (-1.6292284624009088+0.015,3.34+0.015)-- (-1.6292284624009088-0.29,3.08);
\draw [line width=1.pt] (-2.,1.32) circle (0.4cm);
\draw [line width=1.pt] (0.,1.32) circle (0.4cm);
\draw [line width=1.pt] (-2.,-0.04) circle (0.4cm);
\draw [line width=1.pt] (0.,-0.04) circle (0.4cm);
\draw [line width=1.pt, fill=gray!30] (1.58,-0.04) circle (0.4cm);
\draw [line width=1.pt] (1.58,-1.4) circle (0.4cm);
\draw [->,line width=1.pt] (-2.,2.28) -- (-2.,2.);
\draw [line width=1.pt] (-2.,2.14)-- (-2.,1.72);
\draw [->,line width=1.pt] (-2.,0.92) -- (-2.,0.64);
\draw [line width=1.pt] (-2.,0.78)-- (-2.,0.36);
\draw [->,line width=1.pt] (0.,2.28) -- (0.,2.);
\draw [line width=1.pt] (0.,2.14)-- (0.,1.72);
\draw [->,line width=1.pt] (0.,0.92) -- (0.,0.64);
\draw [line width=1.pt] (0.,0.78)-- (0.,0.36);
\draw [->,line width=1.pt] (0.30316022260619674,1.0590519602883373) -- (0.79,0.64);
\draw [line width=1.pt] (0.5465801113030977,0.8495259801441692)-- (1.2768397773938007,0.2209480397116652);
\draw [->,line width=1.pt] (1.58,-0.44) -- (1.58,-0.72);
\draw [line width=1.pt] (1.58,-0.58)-- (1.58,-1.);
\draw (-2.,0.24+0.07) node[anchor=north] {\scalebox{1.42}{$0$}};
\draw (0.,0.24+0.07) node[anchor=north] {\scalebox{1.42}{$0$}};
\draw (1.58,-1.12+0.07) node[anchor=north] {\scalebox{1.42}{$0$}};
\draw (-2.,1.6+0.07) node[anchor=north] {\scalebox{1.42}{$1$}};
\draw (0.05,1.63+0.07) node[anchor=north] {\scalebox{1.42}{$\moon$}};
\draw (0.,2.96+0.07) node[anchor=north] {\scalebox{1.42}{$0$}};
\draw (-1.,4.28+0.07) node[anchor=north] {\scalebox{1.42}{$1$}};
\draw (1.55,0.24+0.08) node[anchor=north] {\scalebox{1.42}{$A$}};
\draw (-2.-0.02,2.96+0.08) node[anchor=north] {\scalebox{1.42}{$B$}};

\draw (-2.+0.5,0.24-0.3+0.07) node[anchor=north] {\scalebox{0.8}{$0$}};
\draw (0.+0.5,0.24+0.07-0.3) node[anchor=north] {\scalebox{0.8}{$0$}};
\draw (1.58+0.5,-1.12+0.07-0.3) node[anchor=north] {\scalebox{0.8}{$0$}};
\draw (-2.+0.5,1.6+0.07-0.3) node[anchor=north] {\scalebox{0.8}{$1$}};
\draw (0.+0.5,1.6+0.07-0.3) node[anchor=north] {\scalebox{0.8}{$2$}};
\draw (0.+0.5,2.96+0.07-0.3) node[anchor=north] {\scalebox{0.8}{$3$}};
\draw (-1.+0.5,4.28+0.07-0.3) node[anchor=north] {\scalebox{0.8}{$4$}};
\draw (1.58+0.5,0.24+0.07-0.3) node[anchor=north] {\scalebox{0.8}{$1$}};
\draw (-2+0.5,2.96+0.07-0.3) node[anchor=north] {\scalebox{0.8}{$2$}};

\end{tikzpicture}}
\end{center}
\vspace{-0.5cm}
    \caption{Example of Algorithm \ref{alg:Gentailing} execution: $A=\mathbb{Z}_0^{+}\backslash \{0\}$ and $B=\mathbb{Z}_0^{+}\backslash \{1\}$.}
    \label{fig:fig6}
\end{figure}

Theorem \ref{th:entailingsum}, concerning the disjunctive sum of two games $G=(V,x^\circ)$ and $H=(V,y^{\circ})$, expresses the fact that $\moon\!$ is absorbing. Theorem~\ref{th:entailingoutcome}, concerning the outcome of a game $G=(V,x^\circ)$, asserts what was expected.

\begin{theorem}[$\mathcal{G}$-value of a disjunctive sum of acyclic impartial games with entailing moves]\label{th:entailingsum} If $G=(V,x^\circ)$ and $H=(V,y^{\circ})$ are two games, then $$\mathcal{G}(G+H)=\begin{cases}
m\oplus k \quad\quad\,\,\,\textrm{ if }\mathcal{G}(G)=m\textrm{ and }\mathcal{G}(H)=k\!;\\
\moon \quad\quad\quad\quad\,\,\textrm{ otherwise.}
\end{cases}$$
\end{theorem}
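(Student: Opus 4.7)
The plan is to prove Theorem \ref{th:entailingsum} by induction on the rank of $G+H$ in the cartproduct digraph $V\ast U$, which is well-defined since $V$ and $U$ are finite and acyclic. The options of $G+H=(V\ast U,(x^\circ,y^\circ))$ are exactly $G'+H$ for options $G'$ of $G$ and $G+H'$ for options $H'$ of $H$, since both pieces start on white nodes. The terminal base case yields $\mathcal{G}(G)=\mathcal{G}(H)=0=0\oplus 0$. The strategy is to match the integer elements of $D\cup P$ feeding the $\mex$ in Algorithm \ref{alg:Gentailing} applied to $G+H$ with shifts of the analogous sets for $G$ and $H$.

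The key technical lemma to establish is: \emph{if $\mathcal{G}(H)=k\in\mathbb{Z}_0^+$ and $G^\downarrow=(V,z^{\textcolor{gray}{\bullet}})$ is any gray follower reached in the computation, then $\textnormal{Cr}(G^\downarrow+H,c)=\textnormal{Cr}(G^\downarrow,c\oplus k)$ for every $c\in\mathbb{Z}_0^+$, so $\mathcal{C}(G^\downarrow+H)=\mathcal{C}(G^\downarrow)\oplus k$.} The proof is a nested induction on rank inside the recursive definition of Cr in Algorithm \ref{alg:protected}. The responses available in $G^\downarrow+H$ are exactly $(G^\downarrow)''+H$, because the $H$-piece is frozen while the marker sits on a gray node. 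If $(G^\downarrow)''$ is white, the main inductive hypothesis says $\mathcal{G}((G^\downarrow)''+H)=\mathcal{G}((G^\downarrow)'')\oplus k$ when the latter is an integer and $\moon$ otherwise; in both cases $\mathcal{G}((G^\downarrow)''+H)\neq c\iff\mathcal{G}((G^\downarrow)'')\neq c\oplus k$. If $(G^\downarrow)''$ is gray, the nested inductive hypothesis equates $\textnormal{Cr}((G^\downarrow)''+H,c)$ with $\textnormal{Cr}((G^\downarrow)'',c\oplus k)$, so the two Cr-recurrences line up bullet-for-bullet. A symmetric lemma handles the shift by $m$ when $\mathcal{G}(G)=m$ is an integer.

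With these shift lemmas, when both $\mathcal{G}(G)=m$ and $\mathcal{G}(H)=k$ are integers, the integer elements of $D_{G+H}\cup P_{G+H}$ become $\bigl((\tilde D_G\cup\tilde P_G)\oplus k\bigr)\cup\bigl(m\oplus(\tilde D_H\cup\tilde P_H)\bigr)$, where $\tilde D_\bullet$ and $\tilde P_\bullet$ are the mex-feeding sets used to compute $\mathcal{G}(G)$ and $\mathcal{G}(H)$. The classical {\sc nim}-sum mex lemma underlying Theorem \ref{th:classicalsum} then yields $\mex=m\oplus k$, with $m\oplus k$ itself missing from the union, so no $\moon$ is triggered. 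When $\mathcal{G}(G)=\moon$ and $\mathcal{G}(H)=k$ is an integer, each $c\in\mathbb{Z}_0^+$ can be written $a\oplus k$ for some $a\in\tilde D_G\cup\tilde P_G$; the matching white option $G'$ gives $\mathcal{G}(G'+H)=c$, or the matching gray option $G'$ gives $c\in\mathcal{C}(G')\oplus k=\mathcal{C}(G'+H)$, so $\mathbb{Z}_0^+\subseteq D_{G+H}\cup P_{G+H}$ and $\mathcal{G}(G+H)=\moon$. The main obstacle I expect is the subcase $\mathcal{G}(G)=\mathcal{G}(H)=\moon$: here white options produce only $\moon$-valued children, so integer coverage must come entirely from gray options on at least one side. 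Handling this requires an additional induction exploiting that once one summand carries $\moon$, the $\mathcal{G}\neq c$ requirement for white responses in the Cr-recursion is satisfied automatically, allowing the full covering $\mathbb{Z}_0^+$ already present in the $\moon$-valued summand to propagate, via gray-branch Cr conditions, into $P_{G+H}$.
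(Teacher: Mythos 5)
A preliminary remark: the paper itself gives no proof of Theorem \ref{th:entailingsum}; it is one of the known LNST results imported from \cite{Lar021}, and the surrounding text states explicitly that proofs of known results are omitted. So your proposal cannot be checked against an in-paper argument, and the assessment below is of your route on its own terms, using this paper's algorithmic definitions.

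Your architecture --- induction on rank in the cartproduct together with the shift lemma $\text{Cr}(G^\downarrow+H,c)=\text{Cr}(G^\downarrow,c\oplus k)$ --- is sound and correctly executed whenever at least one summand has an integer value: from a gray node of $V\ast U$ the only options are the local responses, the two Cr recurrences match clause by clause under the inductive hypothesis, the gray options of $G+H$ are exactly the $G'+H$ and $G+H'$, the classical mex/{\sc nim}-sum computation yields $m\oplus k$ with $m\oplus k$ itself excluded (so $\moon$ is not triggered), and $\mathbb{Z}_0^{+}\oplus k=\mathbb{Z}_0^{+}$ settles the $\moon$-versus-integer case. The genuine gap is the case $\mathcal{G}(G)=\mathcal{G}(H)=\moon$, which you flag but do not close. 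There, every white option of $G+H$ has value $\moon$ by induction, so $D_{G+H}$ contributes no integers and you need $\mathbb{Z}_0^{+}\subseteq P_{G+H}$; but the covering $\mathbb{Z}_0^{+}\subseteq D_G\cup P_G$ does not ``propagate'' elementwise, because the part of it living in $D_G$ is useless here (a white option $G'$ with $\mathcal{G}(G')=c$ gives $\mathcal{G}(G'+H)=\moon\neq c$). What actually closes the case is a finiteness argument absent from your sketch: $D_G$ is finite because $V$ is, so $\mathbb{Z}_0^{+}\subseteq D_G\cup P_G$ forces, via the dichotomy of Theorem \ref{def:protected3}, some gray option $G'$ of $G$ with $\mathcal{C}(G')$ cofinite; cofiniteness of $\mathcal{C}(G')$ is equivalent to $G'$ being a $\mathcal{P}$-position of the subgame played only along gray-to-gray arcs, since for large $k$ every white-response clause of Cr is vacuously satisfied; and once the moon $H$ is attached, those white-response clauses are vacuous for \emph{every} $c$, so Cr$(G'+H,c)$ computes exactly that $\mathcal{P}$-position predicate and $\mathcal{C}(G'+H)=\mathbb{Z}_0^{+}$. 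With those two or three sentences added, identifying \emph{which} gray option does the covering and \emph{why} it covers all of $\mathbb{Z}_0^{+}$ rather than merely a shifted copy of its old covered set, your proof is complete.
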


\begin{theorem}[Outcome of an acyclic impartial game with entailing moves with a given $\mathcal{G}$-value]\label{th:entailingoutcome}If $G=(V,x^\circ)$ is a game, then
\begin{enumerate}
  \item[\scalebox{0.5}{$\bullet$}] $o(G)=\mathscr{P}$ if and only if $\mathcal{G}(G)=0$;
  \item[\scalebox{0.5}{$\bullet$}] $o(G)=\mathscr{N}$ if and only if $\mathcal{G}(G)\neq 0$.
\end{enumerate}
\end{theorem}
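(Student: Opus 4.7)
Since cycles are forbidden, the draw outcome cannot arise (Definition \ref{def:outcomes}), so the theorem reduces to the single equivalence $\mathcal{G}(G) = 0 \iff o(G) = \mathscr{P}$. I would proceed by induction on the rank of $x$ in the acyclic digraph $V$. The recurrence for $\mathcal{G}(G)$ at a white node depends on the covered sets $\mathcal{C}(G')$ of its gray options, and dually the recurrence for $\mathcal{C}(G)$ at a gray node depends on the $\mathcal{G}(G')$ of its white options; hence a single-color inductive hypothesis will not close. I therefore strengthen the statement to a joint invariant, proved by simultaneous induction:
\begin{enumerate}
\item[(i)] for white $G = (V, x^\circ)$: $\mathcal{G}(G) = 0 \iff o(G) = \mathscr{P}$;
\item[(ii)] for gray $G = (V, x^{\textcolor{gray}{\bullet}})$: $0 \in \mathcal{C}(G) \iff o(G) = \mathscr{P}$.
\end{enumerate}
Because only a single piece is in play when computing $o(G)$ in isolation, node color is invisible to the outcome function, so any gray option simply acts as an ordinary option in the $\mathscr{P}/\mathscr{N}$ recursion.

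For the base case, a terminal white node has $\mathcal{G}(G) = 0$ and $o(G) = \mathscr{P}$, and a terminal gray node has $\mathcal{C}(G) = \mathbb{Z}_0^{+} \ni 0$ and $o(G) = \mathscr{P}$. For the forward direction of (i), Algorithm \ref{alg:Gentailing} gives $\mathcal{G}(G) = 0$ iff $0 \notin D \cup P$, i.e., no white option $G'$ has $\mathcal{G}(G') = 0$ and no gray option $G'$ has $0 \in \mathcal{C}(G')$; invoking (i) and (ii) on options, this says exactly that every option of $G$ is an $\mathscr{N}$-position, which is $o(G) = \mathscr{P}$. Conversely, if $\mathcal{G}(G) \neq 0$, then either $\mathcal{G}(G)$ is a positive integer -- in which case the mex rule forces $0 \in D \cup P$ -- or $\mathcal{G}(G) = \moon$, in which case Algorithm \ref{alg:Gentailing} guarantees $\mathbb{Z}_0^{+} \subseteq D \cup P$; either way $0 \in D \cup P$, yielding an option of $G$ that is a $\mathscr{P}$-position by induction, so $o(G) = \mathscr{N}$.

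The inductive step for (ii) is the parallel argument: $0 \in \mathcal{C}(G)$ iff $\mathrm{Cr}(G, 0) = {\sc true}$, which by Algorithm \ref{alg:protected} specialized to $k = 0$ means every white option $G'$ satisfies $\mathcal{G}(G') \neq 0$ and every gray option $G'$ satisfies $0 \notin \mathcal{C}(G')$; by the joint induction hypothesis this translates to every option of $G$ being $\mathscr{N}$, equivalently $o(G) = \mathscr{P}$. The main obstacle is the synchronization between colors in the joint invariant: one must recognize that at the threshold $k = 0$, the two superficially different mechanisms (mex on $D \cup P$ versus the recursive Cr predicate) both reduce to the same assertion, namely ``no option is a $\mathscr{P}$-position''. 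The $\moon$-case adds no real difficulty, since it is absorbed by the observation that $\mathbb{Z}_0^{+} \subseteq D \cup P$ in particular contains $0$. Once the joint invariant is set up, the proof is a direct unfolding of Algorithms \ref{alg:protected} and \ref{alg:Gentailing}.
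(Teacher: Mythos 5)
Your proof is correct. Note that the paper does not actually prove this theorem: it is quoted as a known LNST result, with the proof deferred to \cite{Lar021}, so there is no in-paper argument to compare against line by line. On its own terms your argument is sound, and its two load-bearing observations are both right: (a) with a single piece on the board the gray/white distinction is invisible to the outcome recursion, so $o(G)$ obeys the ordinary $\mathscr{P}/\mathscr{N}$ recursion on the acyclic digraph and draws cannot occur; and (b) the correct strengthening for the induction is the joint invariant pairing ``$\mathcal{G}(G)=0$'' at white nodes with ``$0\in\mathcal{C}(G)$'' at gray nodes, which is exactly the semantic content of $\mathrm{Cr}(G,0)$ --- ``moving to $G$ wins against $*0$'' is the same assertion as ``$G$ alone is a $\mathscr{P}$-position.'' The alternation built into $\mathrm{Cr}$ (a gray option must return \textsc{false}) is what makes the two clauses of the invariant mesh, and you unfold it correctly; the $\moon$ case is indeed absorbed by $0\in D\cup P$. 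For comparison, the proof the paper does give for the cyclic generalization, Theorem~\ref{th:cycliccarryoutcome}, is organized differently: it inducts on the assignment step of the algorithm rather than on the rank of the node (forced there, since rank is not well founded in a cyclic digraph), and it exploits the fact that the hypotheses of its implications partition all possible $\mathcal{G}$-values, so only the forward implications need proof. Your binary-outcome version of that device --- proving $\mathcal{G}(G)=0\Rightarrow\mathscr{P}$ and $\mathcal{G}(G)\neq 0\Rightarrow\mathscr{N}$ and letting exhaustiveness supply the converses --- is the same idea, and in the acyclic setting your rank induction with the explicit gray-node invariant is arguably the cleaner formulation.
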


\section{Cyclic impartial games with carry-on moves}
\label{sec:carry}

If a game digraph with cycles has gray nodes with \emph{the outdegree of them being at most one}, then we have a \emph{cyclic impartial game with carry-on moves}. Observe that when a gray node has exactly one outgoing edge, that is a carry-on move, as when a player uses it, the opponent responds in a mandatory way and the player \emph{moves again}. The mex rule and the {\sc nim}-sum continue to play a crucial role, as the non-cyclic zones of the digraph behaves in exactly the same way as in previous theories.

Regarding the assignment of white nodes, once again, the mex rule is applied to the union of the Grundy values of the already assigned white options with the Grundy values protected by the already assigned gray options. As one would expect, given the potential occurrence of carry-on moves, the concept of protection used in LNST is needed. Furthermore, regarding the unassigned options, similar to SFPT, it is necessary to check reversibility. For example, when the mex rule points to an integer $m$, if for any unassigned option, there is an immediate response to a white node assigned with $m$ or an immediate response to a gray node that offers \emph{protection for the presence of} $m$, reversibility happens. This second possibility is not considered in SFPT, and in this new theory, the boolean function Cr has to be used. However, given the potential occurrence of cycles, the part ``whose options are previously assigned'' should be removed. Additionally, it should be established that {\sc false} occurs when {\sc true} \emph{is not reached prior to an infinite run}. The function adapted for the case of digraphs with cycles and carry-on moves is as follows (the meaning of $\fullmoon$ will be clarified soon).

\begin{algorithm}[htb!]
\caption{{\sc boolean} function Cr to evaluate whether a carry-on move (a move to a gray node) results in $k$ being a protected value}\label{alg:protected}
\begin{algorithmic}

\Require an integer $k\geqslant 0$ and $G=(V,x^{\textcolor{gray}{\bullet}})$\\
\If{$x^{\textcolor{gray}{\bullet}}$ is a terminal node} \Return {\sc true} $\rightarrow$ Cr$(G,k)$
\Else{}
\If{all options $G'$ fall into one of the following cases:
\State i) $G'=(V,{x'}^\circ)$ and $\mathcal{G}_n(G')\in \{\moon,\fullmoon\}\cup\mathbb{Z}_0^{+}\backslash \{k\}$
\State ii) there is some $G''=(V,{x''}^\circ)$ such that $\mathcal{G}_n(G'')=k$
\State iii) there is some $G''=(V,{x''}^{\textcolor{gray}{\bullet}})$ such that Cr$(G'',k)=${\sc true}}
    \Return {\sc true} $\rightarrow$ Cr$(G,k)$
\Else{} \Return {\sc false} $\rightarrow$ Cr$(G,k)$
\EndIf \quad\Comment{The condition may fail due to an infinite run; in this case, the function returns {\sc false} and not an error message}
\EndIf
\end{algorithmic}
\end{algorithm}

In Figure \ref{fig:fig7}, considering $G=(V,x^\circ)$ where $x^\circ$ is the node where a single piece is placed, the assigned options point to the minimum excluded value $1$. Moreover, it is straightforward to verify that, regarding the only unassigned option, it has a carry-on move that provides protection for the presence of $1$, i.e., it reverts to a node whose Grundy value is $1$ after a forcing sequence. Thus, $G$ must be assigned with the Grundy value $1$. We will see later that the unlabeled nodes form a cyclic zone. In this example, reversibility is a kind of ``hit and run'', a carry-on move is made within a cyclic zone and then a subsequent move exits it immediately.

\begin{figure}[htb!]
\begin{center}
\scalebox{0.7}{
\begin{tikzpicture}
\clip(-1.5,-0.6) rectangle (9.5,4.5);
\draw (1,4+0.35) node[anchor=north] {\scalebox{1.4}{$\bm{\circledast}$}};
\draw [line width=1.pt] (1.,4.) circle (0.4cm);
\draw [line width=1.pt] (-1.,2.) circle (0.4cm);
\draw [line width=1.pt, fill=gray!30] (1.,2.) circle (0.4cm);
\draw [line width=1.pt] (1.,0.) circle (0.4cm);
\draw [line width=1.pt] (-1.,0.) circle (0.4cm);
\draw [line width=3.pt] (3.,4.) circle (0.4cm);
\draw [line width=3.pt, fill=gray!30] (5.,4.) circle (0.4cm);
\draw [line width=1.pt] (5.+2,2.) circle (0.4cm);
\draw [line width=1.pt] (5.+2,0.) circle (0.4cm);
\draw [line width=3.pt] (7.,4.) circle (0.4cm);
\draw [line width=3.pt] (9.,2.) circle (0.4cm);
\draw [line width=3.pt] (9.,4.) circle (0.4cm);
\draw [->,line width=1.pt] (0.7171572875253812,3.717157287525381) -- (0.,3.);
\draw [line width=1.pt] (0.3585786437626906,3.3585786437626908)-- (-0.7171572875253812,2.282842712474619);
\draw [->,line width=1.pt] (1.,3.6) -- (1.,3.);
\draw [line width=1.pt] (1.,3.3)-- (1.,2.4);
\draw [->,line width=1.pt] (1.,1.6) -- (1.,1.);
\draw [line width=1.pt] (1.,1.3)-- (1.,0.4);
\draw [->,line width=1.pt] (0.6,0.) -- (0.,0.);
\draw [line width=1.pt] (0.3,0.)-- (-0.6,0.);
\draw [->,line width=1.pt] (1.4,4.) -- (2.,4.);
\draw [line width=1.pt] (1.7,4.)-- (2.6,4.);
\draw [->,line width=1.pt] (3.4,4.) -- (4.,4.);
\draw [line width=1.pt] (3.7,4.)-- (4.6,4.);
\draw [->,line width=1.pt] (5.+2,3.6) -- (5.+2,3.);
\draw [line width=1.pt] (5.+2,3.3)-- (5.+2,2.4);
\draw [->,line width=1.pt] (5.+2,1.6) -- (5.+2,1.);
\draw [line width=1.pt] (5.+2,1.3)-- (5.+2,0.4);
\draw [->,line width=1.pt] (5.4,4.) -- (6.,4.);
\draw [line width=1.pt] (5.7,4.)-- (6.6,4.);
\draw [->,line width=1.pt] (7.2828427124746105,3.71715728752539) -- (8.,3.);
\draw [line width=1.pt] (7.641421356237302,3.358578643762698)-- (8.717157287525376,2.282842712474622);
\draw [->,line width=1.pt] (9.,2.4) -- (9.,3.);
\draw [line width=1.pt] (9.,2.7)-- (9.,3.6);
\draw [->,line width=1.pt] (8.6,4.) -- (8.,4.);
\draw [line width=1.pt] (8.3,4.)-- (7.4,4.);

\draw (-1,2+0.33) node[anchor=north] {\scalebox{1.42}{$0$}};
\draw (-1,2+0.33-2) node[anchor=north] {\scalebox{1.42}{$0$}};
\draw (-1+2-0.02,2+0.33+0.02) node[anchor=north] {\scalebox{1.42}{$A$}};
\draw (-1+2,2+0.33-2) node[anchor=north] {\scalebox{1.42}{$1$}};
\draw (-1+2+4+2,2+0.33) node[anchor=north] {\scalebox{1.42}{$1$}};
\draw (-1+2+4+2,2+0.33-2) node[anchor=north] {\scalebox{1.42}{$0$}};
\end{tikzpicture}}
\end{center}
\vspace{-0.5cm}
    \caption{A game $G=(V,x^\circ)$ whose Grundy value is $1$ ($A=\mathbb{Z}_0^{+}\backslash \{1\}$).}
    \label{fig:fig7}
\end{figure}

In the theory of cyclic impartial games with carry-on moves, absorbing situations can occur. When a player has to play at a terminal gray node, they immediately lose regardless of what other components may be, because \emph{it is mandatory to play} and that node has no outgoing edges. These gray nodes are assigned with the value $\newmoon$ (``new moon''); only a gray node can be a new moon. Hence, when a player has a new moon as an option, they immediately win regardless of what other components may be. A node in these circumstances is assigned with the value $\fullmoon$ (``full moon''); both a white node and a gray node can be a full moon. The values $\newmoon$ and $\fullmoon$ are designated as \emph{absorbing values}. Figure \ref{fig:fig8} presents two examples\footnote{In \cite{BCG001}, similar situations are referred to as ``sunny'' and ``loony''.}.

\begin{figure}[htb!]
\begin{center}
\scalebox{0.7}{
\begin{tikzpicture}
\clip(-2.5,2.5) rectangle (12.5,3.5);
\draw [line width=1.pt] (-2.,3.) circle (0.4cm);
\draw [line width=1.pt, fill=gray!30] (0.,3.) circle (0.4cm);
\draw [line width=1.pt, fill=gray!30] (2.,3.) circle (0.4cm);
\draw [line width=1.pt, fill=gray!30] (4.,3.) circle (0.4cm);
\draw [line width=1.pt] (8.,3.) circle (0.4cm);
\draw [line width=1.pt, fill=gray!30] (10.,3.) circle (0.4cm);
\draw [line width=1.pt, fill=gray!30] (12.,3.) circle (0.4cm);
\draw [->,line width=1.pt] (-1.6,3.) -- (-1.,3.);
\draw [line width=1.pt] (-1.3,3.)-- (-0.4,3.);
\draw [->,line width=1.pt] (0.4,3.) -- (1.,3.);
\draw [line width=1.pt] (0.7,3.)-- (1.6,3.);
\draw [->,line width=1.pt] (2.4,3.) -- (3.,3.);
\draw [line width=1.pt] (2.7,3.)-- (3.6,3.);
\draw [->,line width=1.pt] (8.4,3.) -- (9.,3.);
\draw [line width=1.pt] (8.7,3.)-- (9.6,3.);
\draw [->,line width=1.pt] (10.4,3.) -- (11.,3.);
\draw [line width=1.pt] (10.7,3.)-- (11.6,3.);

\draw (-2.,3.+0.34) node[anchor=north] {\scalebox{1.42}{$\fullmoon$}};
\draw (-2.+2,3.+0.34) node[anchor=north] {\scalebox{1.42}{$\newmoon$}};
\draw (-2.+2+2,3.+0.34) node[anchor=north] {\scalebox{1.42}{$\fullmoon$}};
\draw (-2.+2+2+2,3.+0.34) node[anchor=north] {\scalebox{1.42}{$\newmoon$}};

\draw (-2.+2+2+2+4,3.+0.34) node[anchor=north] {\scalebox{1.42}{$0$}};
\draw (-2.+2+2+2+4+2,3.+0.34) node[anchor=north] {\scalebox{1.42}{$\fullmoon$}};
\draw (-2.+2+2+2+4+4,3.+0.34) node[anchor=north] {\scalebox{1.42}{$\newmoon$}};
\end{tikzpicture}}
\end{center}
\vspace{-0.5cm}
    \caption{New moon ($\newmoon$) and full moon ($\fullmoon$); two examples.}
    \label{fig:fig8}
\end{figure}

Still regarding the gray nodes, just like in LNST, it is crucial to retain information about the protection they provide. Thus, in addition to the absorbing values, a gray node can be assigned with $\mathbb{Z}_0^{+}\backslash F$ or with $F$, depending on the information provided by its single \emph{previously assigned} option. It is easy to understand how to do this in Algorithm \ref{alg:Gcarry}, so we do not further develop this point. It is only emphasized that a gray node, at the end of the execution, may maintain its initial $\infty$, meaning that it is in a cyclic zone.

One last important factor regarding this new theory concerns the exit from cyclic zones. In the subsection on SFPT, one can read ``when there are two or more pieces in cyclic zones, it is impossible to exit, as a player cannot make more than one move in the same turn''. However, with both cycles and carry-on moves, \emph{this statement is no longer true}. If there are two pieces in a cyclic zone, there is the possibility of executing a forcing sequence with one piece, exiting the cycle, and \emph{forcing the opponent} to move to a white node assigned with $k$. Afterwards, \emph{since the right to play is preserved}, exit with the other piece, possibly obtaining $k\oplus k=0$. Naturally, forcing sequences can be used for a larger number of pieces in cyclic zones. On the other hand, it is very interesting to note that in the theory of cyclic impartial games with carry-on moves, \emph{not all moons are equal}. If there is one piece in a cyclic zone and another piece in a white node assigned with $\moon\!\!$, one way to win may be ``preparing the exit by adjusting the moon''. Starting with the lunar node, a player can use carry-on moves to force the opponent to move to a white node assigned with $k$, and only then exit with the other piece from the cycle. Again, the process can be iterated for a larger number of pieces. Given a white node $G=(V,x^\circ)$ whose value is $\infty$ or $\moon\!\!$, the values accessible to the current player, $\mathbf{D}(G)$, and the values that the opponent of the current player cannot avoid, $\mathbf{F}(G)$, must be taken into account. If $G=(V,x^\circ)$, in a cyclic zone, is assigned with $\infty$, then $\mathbf{F}(G)$ is singular or empty; otherwise $G$ would be assigned with $\moon\!\!$ and not with $\infty$. If $\mathbf{F}(G)$ is empty then the information concerning that node should include the set $\mathbf{D}(G)$, else should include only $\mathbf{F}(G)=\{f\}$. Additionally, the information concerning $G=(V,x^\circ)$ assigned with $\moon\!\!$ should include the set $\mathbf{F}(G)$, which this time may have more than one element (\emph{lunar phase}). Not all moons are equal because their behavior in a disjunctive sum varies with their lunar phase. If $G$ is a moon, it remains true that $G+*n$ is an $\mathcal{N}$-position. However, it ceases to be true that it is sufficient for one component of a disjunctive sum to be a moon for the position to be in $\mathcal{N}$, since it is still possible for the other component to be in a cyclic zone. What is guaranteed is that the position is not in $\mathcal{P}$, although \emph{it can be a tie}. In other words, the assertion ``Next player wins'' should be replaced by ``Next player does not lose,'' which is not the same thing.

Definition \ref{def:directvalues} and Algorithm \ref{alg:direct} work for all games, even if the entailing moves are not carry-on moves (with the remark regarding the possibility of infinite run). The determination of $\mathbf{D}(G)$ can be made with a {\sc while-end} using Algorithm \ref{alg:direct}, where the {\sc end} is determined by the highest Grundy value $\mathcal{G}(G^\downarrow)$. Therefore, $\mathbf{D}(G)$ is used in Algorithm \ref{alg:Gcarry}.\\

\begin{definition}\label{def:directvalues}
Let $G=(V,x)$ be an impartial game with entailing moves. The \emph{direct values} in $G$, $\mathbf{D}(G)$, are determined in the following away:
\begin{enumerate}
  \item[\scalebox{0.5}{$\bullet$}] If there is $G'=(V,{x'}^\circ)$ such that $\mathcal{G}(G')=k$ then $k\in\mathbf{D}(G)$;
  \item[\scalebox{0.5}{$\bullet$}] If there is $G'=(V,{x'}^{\textcolor{gray}{\bullet}})$ such that $k\in \mathbf{D}(G'')$ for every $G''=(V,x'')$, then $k\in\mathbf{D}(G)$.
\end{enumerate}
\end{definition}

\vspace{0.8cm}
\begin{algorithm}[htb!]
\caption{{\sc boolean} function Dr to evaluate the possibility of exiting a node to a white node with Grundy value $k$}\label{alg:direct}
\begin{algorithmic}
\Require an integer $k\geqslant 0$ and $G=(V,x)$\\

\vspace{-0.3cm}
\If{there exists $G'=(V,{x'}^\circ)$ such that $\mathcal{G}(G')=k$ or there exists $G'=(V,{x'}^{\textcolor{gray}{\bullet}})$ such that Dr$(G'',k)=${\sc true} for all $G''$} \Return {\sc true} $\rightarrow$ Dr$(G,k)$
\Else{} \Return {\sc false} $\rightarrow$ Dr$(G,k)$
\EndIf
\end{algorithmic}
\end{algorithm}

\vspace{0.8cm}
Definition \ref{def:forcingvalues} and Algorithm \ref{alg:forcingvalues} work only for carry-on moves. Again, the determination of $\mathbf{F}(G)$ can be made with a {\sc while-end} using Algorithm \ref{alg:forcingvalues}. Essentially, the algorithm checks if it is possible to force $k$ with only one carry-on move or if a sequence of carry-on moves is required, which, for the majority of digraphs under study, has no computational cost. Thus, $\mathbf{F}(G)$ is also used in Algorithm \ref{alg:Gcarry}.

\begin{definition}\label{def:forcingvalues}
Let $G=(V,x)$ be an impartial game with carry-on moves. The \emph{forcing values} from $G$, $\mathbf{F}(G)$, are determined in the following away:
\begin{enumerate}
  \item[\scalebox{0.5}{$\bullet$}] If there is $G'=(V,{x'}^{\textcolor{gray}{\bullet}})$ with $G''=(V,{x''}^\circ)$ such that $\mathcal{G}(G'')=k$ then $k\in\mathbf{F}(G)$;
  \item[\scalebox{0.5}{$\bullet$}] If there is $G'=(V,{x'}^{\textcolor{gray}{\bullet}})$ with $G''=(V,x'')$ such that $k\in \mathbf{F}(G'')$, then $k\in\mathbf{F}(G)$.
\end{enumerate}
\end{definition}

\clearpage
Next, we present Algorithm \ref{alg:Gcarry} for determining Grundy values (exemplified in Figure \ref{fig:fig9}). In what follows, the adorned value of a lunar white node is $\mathcal{G}(G)=\moon_{\!\!a,b,\ldots}$, where $\mathbf{F}(G)=\{a,b,\ldots\}$. On the other hand, a white node that ends the first stage with $\infty$ has value $\mathcal{G}(G)=\infty_{a,b,\ldots}$ or $\mathcal{G}(G)=\nym_{f}$, depending on whether $\mathbf{F}(G)$ is empty or singular. Here, $\mathbf{D}(G)=\{a,b,\ldots\}$, and $\mathbf{F}(G)=\emptyset$ or $\mathbf{F}(G)=\{f\}$ are being considered. Moreover, in the first case, the absence of a subscript indicates that $\mathbf{D}(G)$ is an empty set.

The value represented by $\nym$ is called a \emph{nymphet}, and if we want to particularize the value $f$, we say ``\emph{ninny} $f$''. These designations are analogous to \emph{nimber} and \emph{star} $f$. What happens is that, when the opponent can be forced to move to $*f$, in practice the player can move in $*f$. In this sense, a nymphet and a nimber behave almost in the same way, the only difference being that the former is in a cyclic zone, while the latter is not. Because of that, $*n+*n \in \mathcal{P}$, whereas $*n+\nym_{n} \in \mathcal{D}$.
Accordingly, the ``nim'' sound was intentionally preserved.\\

\begin{algorithm}[htb!]
\caption{{\sc boolean} function Fr to evaluate the possibility of exiting a node and forcing the opponent to move to a white node with Grundy value $k$.}\label{alg:forcingvalues}
\begin{algorithmic}

\Require an integer $k\geqslant 0$ and $G=(V,x)$\\

\vspace{-0.3cm}
\If{there exists $G'=(V,{x'}^{\textcolor{gray}{\bullet}})$ with $G''=(V,x'')$, where (${x''}^\circ$ is white and $\mathcal{G}(G'')=k$) or Fr$(G'',k)=${\sc true}} \Return {\sc true} $\rightarrow$ Fr$(G,k)$
\Else{} \Return {\sc false} $\rightarrow$ Fr$(G,k)$
\EndIf
\end{algorithmic}
\end{algorithm}

\vspace{1cm}
\begin{breakablealgorithm}
\caption{Grundy values: cyclic impartial games with carry-on moves}\label{alg:Gcarry}
\begin{algorithmic}
\Require cyclic digraph $V$ with gray nodes whose outdegree is at most $1$\\

\vspace{-0.3cm}
\State \textbf{Step 0:}\Comment{Initialization}\\

\vspace{-0.3cm}
\If{$G=(V,x^\circ)$ is a game such that $x^\circ$ is a terminal node} $\mathcal{G}_0(G)=0$
\Else{}, if $x^\circ$ is not terminal, $\mathcal{G}_0(G)=\infty$
\EndIf
\If{$G=(V,x^{\textcolor{gray}{\bullet}})$ is a game such that $x^{\textcolor{gray}{\bullet}}$ is a terminal node} $\mathcal{C}_0(G)=\newmoon$
\Else{}, if $x^{\textcolor{gray}{\bullet}}$ is not terminal, $\mathcal{C}_0(G)=\infty$
\EndIf\\

\vspace{-0.3cm}
\State \textbf{Step n+1:}\\
\vspace{-0.3cm}
\If{$G=(V,x^\circ)$}\Comment{Assignment of white nodes}
\If{there is $G'=(V,{x'}^{\textcolor{gray}{\bullet}})$ with $\mathcal{C}_{n}(G')=\newmoon$} $\mathcal{G}_{n+1}(G)=\fullmoon$
\Else{}
\State \textbf{let} $D=\{\mathcal{G}_{n}(G'): G'=(V,{x'}^\circ)\text{ is assigned with an element of }\mathbb{Z}_0^{+}\}$
\State \textbf{let} $P=\cup_{G'=(V,{x'}^{\textcolor{gray}{\bullet}})\text{ assigned and not fullmoon}}\,\mathcal{C}_{n}(G')$
\If{$\mathbb{Z}_0^{+}\subseteq D\cup P$} $\mathcal{G}_{n+1}(G)=\moon$
\Else{} \State \textbf{let} $m=\mex(D\cup P)$
\If{for every unassigned $G'$, there is $G''=(V,{x''}^\circ)$ with $\mathcal{G}(G'')=m$ or
there is $G''=(V,{x''}^{\textcolor{gray}{\bullet}})$ such that Cr$(G'',m)=\,${\sc true}} $\mathcal{G}_{n+1}(G)=m$
\EndIf
\EndIf
\EndIf
\EndIf\\

\vspace{-0.3cm}
\If{$G=(V,x^{\textcolor{gray}{\bullet}})$ and $G'=(V,{x'}^\circ)$}\Comment{Assignment of gray nodes}
\State \textbf{if} $\mathcal{G}_{n}(G')=\fullmoon$ \textbf{then} $\mathcal{C}_{n+1}(G)=\newmoon$ \textbf{end if}
\State \textbf{if} $\mathcal{G}_{n}(G')=k\in\mathbb{Z}_0^{+}$ \textbf{then} $\mathcal{C}_{n+1}(G)=\mathbb{Z}_0^{+}\backslash \{k\}$ \textbf{end if}
\State \textbf{if} $\mathcal{G}_{n}(G')=\moon\!$ \textbf{then} $\mathcal{C}_{n+1}(G)=\mathbb{Z}_0^{+}$ \textbf{end if}
\EndIf\\

\vspace{-0.3cm}
\If{$G=(V,x^{\textcolor{gray}{\bullet}})$ and $G'=(V,{x'}^{\textcolor{gray}{\bullet}})$}
\State \textbf{if} $\mathcal{C}_{n}(G')=\fullmoon$ \textbf{then} $\mathcal{C}_{n+1}(G)=\newmoon$ \textbf{end if}
\State \textbf{if} $\mathcal{C}_{n}(G')=\newmoon$ \textbf{then} $\mathcal{C}_{n+1}(G)=\fullmoon$ \textbf{end if}
\State \textbf{if} $\mathcal{C}_{n}(G')=F$ \textbf{then} $\mathcal{C}_{n+1}(G)=\mathbb{Z}_0^{+}\backslash F$ \textbf{end if}
\State \textbf{if} $\mathcal{C}_{n}(G')=\mathbb{Z}_0^{+}\backslash F$ \textbf{then} $\mathcal{C}_{n+1}(G)=F$ \textbf{end if}
\EndIf

\If{$\mathcal{G}_{n+1}(G)=\mathcal{G}_{n}(G)$ and $\mathcal{C}_{n+1}(G)=\mathcal{C}_{n}(G)$, $\forall G=(V,x)$} \textbf{End}
\EndIf \Comment{Ending condition}\\\\

\vspace{-0.3cm}
\Return for each $G=(V,x^\circ)$, and the step $t$ being the final step,
\If{$\mathcal{G}_{t}(G)\in \mathbb{Z}_0^{+}\cup \{\fullmoon\}$} $\mathcal{G}(G)=\mathcal{G}_{t}(G)$
\ElsIf{$\mathcal{G}_{t}(G)=\moon$} $\mathcal{G}(G)=\moon\!\!_{a,b,\ldots}$, where $\mathbf{F}(G)=\{a,b,\ldots\}$
\ElsIf{$\mathcal{G}_{t}(G)=\infty$ and $\mathbf{F}(G)=\{f\}$} $\mathcal{G}(G)=\nym_{f}$
\ElsIf{$\mathcal{G}_{t}(G)=\infty$ and $\mathbf{F}(G)=\emptyset$} $\mathcal{G}(G)=\infty_{a,b,\ldots}$,
\State \quad\quad\quad\quad\quad\quad\quad\quad\quad\quad\quad\quad where $\mathbf{D}(G)=\{a,b,\ldots\}$
\EndIf
\end{algorithmic}
\end{breakablealgorithm}

\clearpage

\begin{figure}[htb!]
\begin{center}
\scalebox{1}{
\begin{tikzpicture}
\clip(0.,0.) rectangle (14.,10.);
\draw [line width=1.pt] (1.,1.) circle (0.4cm);
\draw [line width=1.pt] (3.,1.) circle (0.4cm);
\draw [line width=1.pt] (5.,1.) circle (0.4cm);
\draw [line width=1.pt] (7.,1.) circle (0.4cm);
\draw [line width=1.pt, fill=gray!30] (9.,1.) circle (0.4cm);
\draw [line width=1.pt] (11.,1.) circle (0.4cm);
\draw [line width=1.pt] (13.,1.) circle (0.4cm);
\draw [line width=3.pt] (1.,3.) circle (0.4cm);
\draw [line width=1.pt] (3.,3.) circle (0.4cm);
\draw [line width=1.pt] (5.,3.) circle (0.4cm);
\draw [line width=3.pt] (7.,3.) circle (0.4cm);
\draw [line width=1.pt] (9.,3.) circle (0.4cm);
\draw [line width=1.pt, fill=gray!30] (11.,3.) circle (0.4cm);
\draw [line width=1.pt, fill=gray!30] (13.,3.) circle (0.4cm);
\draw [line width=3.pt, fill=gray!30] (1.,5.) circle (0.4cm);
\draw [line width=1.pt, fill=gray!30] (3.,5.) circle (0.4cm);
\draw [line width=3.pt] (5.,5.) circle (0.4cm);
\draw [line width=3.pt, fill=gray!30] (7.,5.) circle (0.4cm);
\draw [line width=3.pt] (9.,5.) circle (0.4cm);
\draw [line width=1.pt] (11.,5.) circle (0.4cm);
\draw [line width=1.pt, fill=gray!30] (13.,5.) circle (0.4cm);
\draw [line width=1.pt] (1.,7.) circle (0.4cm);
\draw [line width=1.pt, fill=gray!30] (3.,7.) circle (0.4cm);
\draw [line width=1.pt] (5.,7.) circle (0.4cm);
\draw [line width=1.pt, fill=gray!30] (7.,7.) circle (0.4cm);
\draw [line width=1.pt] (9.,7.) circle (0.4cm);
\draw [line width=1.pt] (11.,7.) circle (0.4cm);
\draw [line width=1.pt] (13.,7.) circle (0.4cm);
\draw [line width=1.pt] (1.,9.) circle (0.4cm);
\draw [line width=1.pt] (3.,9.) circle (0.4cm);
\draw [line width=1.pt] (5.,9.) circle (0.4cm);
\draw [line width=1.pt, fill=gray!30] (7.,9.) circle (0.4cm);
\draw [line width=1.pt] (9.,9.) circle (0.4cm);
\draw [line width=1.pt] (11.,9.) circle (0.4cm);
\draw [line width=1.pt] (13.,9.) circle (0.4cm);
\draw [->,line width=1.pt] (2.6,9.) -- (2.,9.);
\draw [line width=1.pt] (2.3,9.)-- (1.4,9.);
\draw [->,line width=1.pt] (8.6,9.) -- (8.,9.);
\draw [line width=1.pt] (8.3,9.)-- (7.4,9.);
\draw [->,line width=1.pt] (4.6,5.) -- (4.,5.);
\draw [line width=1.pt] (4.3,5.)-- (3.4,5.);
\draw [->,line width=1.pt] (7.4,5.) -- (8.,5.);
\draw [line width=1.pt] (7.7,5.)-- (8.6,5.);
\draw [->,line width=1.pt] (9.4,5.) -- (10.,5.);
\draw [line width=1.pt] (9.7,5.)-- (10.6,5.);
\draw [->,line width=1.pt] (11.4,5.) -- (12.,5.);
\draw [line width=1.pt] (11.7,5.)-- (12.6,5.);
\draw [->,line width=1.pt] (12.6,3.) -- (12.,3.);
\draw [line width=1.pt] (12.3,3.)-- (11.4,3.);
\draw [->,line width=1.pt] (7.4,3.) -- (8.,3.);
\draw [line width=1.pt] (7.7,3.)-- (8.6,3.);
\draw [->,line width=1.pt] (11.4,1.) -- (12.,1.);
\draw [line width=1.pt] (11.7,1.)-- (12.6,1.);
\draw [->,line width=1.pt] (10.6,1.) -- (10.,1.);
\draw [line width=1.pt] (10.3,1.)-- (9.4,1.);
\draw [->,line width=1.pt] (6.6,1.) -- (6.,1.);
\draw [line width=1.pt] (6.3,1.)-- (5.4,1.);
\draw [->,line width=1.pt] (3.4,1.) -- (4.,1.);
\draw [line width=1.pt] (3.7,1.)-- (4.6,1.);
\draw [->,line width=1.pt] (4.6,3.) -- (4.,3.);
\draw [line width=1.pt] (4.3,3.)-- (3.4,3.);
\draw [->,line width=1.pt] (1.,8.6) -- (1.,8.);
\draw [line width=1.pt] (1.,8.3)-- (1.,7.4);
\draw [->,line width=1.pt] (3.,8.6) -- (3.,8.);
\draw [line width=1.pt] (3.,8.3)-- (3.,7.4);
\draw [->,line width=1.pt] (5.,7.4) -- (5.,8.);
\draw [line width=1.pt] (5.,7.7)-- (5.,8.6);
\draw [->,line width=1.pt] (3.282842712474623,8.717157287525378) -- (4.,8.);
\draw [line width=1.pt] (3.6414213562373194,8.358578643762682)-- (4.717157287525408,7.282842712474593);
\draw [->,line width=1.pt] (5.282842712474628,8.717157287525371) -- (6.,8.);
\draw [line width=1.pt] (5.6414213562373146,8.358578643762685)-- (6.717157287525374,7.282842712474628);
\draw [->,line width=1.pt] (13.,7.4) -- (13.,8.);
\draw [line width=1.pt] (13.,7.7)-- (13.,8.6);
\draw [->,line width=1.pt] (11.,7.4) -- (11.,8.);
\draw [line width=1.pt] (11.,7.7)-- (11.,8.6);
\draw [->,line width=1.pt] (13.,5.4) -- (13.,6.);
\draw [line width=1.pt] (13.,5.7)-- (13.,6.6);
\draw [->,line width=1.pt] (13.,1.4) -- (13.,2.);
\draw [line width=1.pt] (13.,1.7)-- (13.,2.6);
\draw [->,line width=1.pt] (11.,5.4) -- (11.,6.);
\draw [line width=1.pt] (11.,5.7)-- (11.,6.6);
\draw [->,line width=1.pt] (9.,5.4) -- (9.,6.);
\draw [line width=1.pt] (9.,5.7)-- (9.,6.6);
\draw [->,line width=1.pt] (9.,7.4) -- (9.,8.);
\draw [line width=1.pt] (9.,7.7)-- (9.,8.6);
\draw [->,line width=1.pt] (7.,8.6) -- (7.,8.);
\draw [line width=1.pt] (7.,8.3)-- (7.,7.4);
\draw [->,line width=1.pt] (3.,6.6) -- (3.,6.);
\draw [line width=1.pt] (3.,6.3)-- (3.,5.4);
\draw [->,line width=1.pt] (1.,1.4) -- (1.,2.);
\draw [line width=1.pt] (1.,1.7)-- (1.,2.6);
\draw [->,line width=1.pt] (1.,3.4) -- (1.,4.);
\draw [line width=1.pt] (1.,3.7)-- (1.,4.6);
\draw [->,line width=1.pt] (3.,2.6) -- (3.,2.);
\draw [line width=1.pt] (3.,2.3)-- (3.,1.4);
\draw [->,line width=1.pt] (5.,2.6) -- (5.,2.);
\draw [line width=1.pt] (5.,2.3)-- (5.,1.4);
\draw [->,line width=1.pt] (3.,4.6) -- (3.,4.);
\draw [line width=1.pt] (3.,4.3)-- (3.,3.4);
\draw [->,line width=1.pt] (5.,4.6) -- (5.,4.);
\draw [line width=1.pt] (5.,4.3)-- (5.,3.4);
\draw [->,line width=1.pt] (5.178885438199979,4.6422291236000435) -- (6.,3.);
\draw [line width=1.pt] (5.5894427190999885,3.821114561800023)-- (6.821114561800019,1.357770876399963);
\draw [->,line width=1.pt] (6.717157287525373,1.2828427124746282) -- (6.,2.);
\draw [line width=1.pt] (6.358578643762682,1.6414213562373188)-- (5.28284271247461,2.7171572875253904);
\draw [->,line width=1.pt] (11.,3.4) -- (11.,4.);
\draw [line width=1.pt] (11.,3.7)-- (11.,4.6);
\draw [->,line width=1.pt] (9.,1.4) -- (9.,2.);
\draw [line width=1.pt] (9.,1.7)-- (9.,2.6);
\draw [->,line width=1.pt] (7.,3.4) -- (7.,4.);
\draw [line width=1.pt] (7.,3.7)-- (7.,4.6);
\draw [->,line width=1.pt] (7.,1.4) -- (7.,2.);
\draw [line width=1.pt] (7.,1.7)-- (7.,2.6);
\draw [shift={(4.99856318834274,10.030187762640395)},line width=1.pt]  plot[domain=4.335989333062406:4.742855436060141,variable=\t]({1.*9.890182578869336*cos(\t r)+0.*9.890182578869336*sin(\t r)},{0.*9.890182578869336*cos(\t r)+1.*9.890182578869336*sin(\t r)});
\draw [shift={(4.998563188342742,10.030187762640386)},line width=1.pt]  plot[domain=4.682186889242518:5.089052992240254,variable=\t]({1.*9.890182578869327*cos(\t r)+0.*9.890182578869327*sin(\t r)},{0.*9.890182578869327*cos(\t r)+1.*9.890182578869327*sin(\t r)});
\draw [->,line width=1.pt] (4.699904401929208,0.14451558657323105) -- (5.299835385400387,0.14459488681091415);
\draw [shift={(3.136200084875491,0.8341193912057764)},line width=1.pt]  plot[domain=1.2208422499112235:1.654683601001732,variable=\t]({1.*4.788967422817719*cos(\t r)+0.*4.788967422817719*sin(\t r)},{0.*4.788967422817719*cos(\t r)+1.*4.788967422817719*sin(\t r)});
\draw [shift={(3.136200084875487,0.834119391205794)},line width=1.pt]  plot[domain=1.5293751373864943:1.9632164884770047,variable=\t]({1.*4.788967422817701*cos(\t r)+0.*4.788967422817701*sin(\t r)},{0.*4.788967422817701*cos(\t r)+1.*4.788967422817701*sin(\t r)});
\draw [->,line width=1.pt] (3.3345080935452427,5.618979149914157) -- (2.7147800055532327,5.604508693301928);
\draw [shift={(3.112146771878832,5.780762598636403)},line width=1.pt]  plot[domain=3.6740611484508:3.9930606472312737,variable=\t]({1.*2.2343184712408974*cos(\t r)+0.*2.2343184712408974*sin(\t r)},{0.*2.2343184712408974*cos(\t r)+1.*2.2343184712408974*sin(\t r)});
\draw [shift={(0.8498214989272379,3.53976199856965)},line width=1.pt]  plot[domain=0.8584528087502594:1.1938047477215414,variable=\t]({1.*1.4537409823253697*cos(\t r)+0.*1.4537409823253697*sin(\t r)},{0.*1.4537409823253697*cos(\t r)+1.*1.4537409823253697*sin(\t r)});
\draw [shift={(11.112146771878825,5.780762598636402)},line width=1.pt]  plot[domain=3.674061148450801:3.9930606472312764,variable=\t]({1.*2.2343184712408917*cos(\t r)+0.*2.2343184712408917*sin(\t r)},{0.*2.2343184712408917*cos(\t r)+1.*2.2343184712408917*sin(\t r)});
\draw [shift={(8.849821498927241,3.5397619985696553)},line width=1.pt]  plot[domain=0.8584528087502584:1.1938047477215425,variable=\t]({1.*1.4537409823253635*cos(\t r)+0.*1.4537409823253635*sin(\t r)},{0.*1.4537409823253635*cos(\t r)+1.*1.4537409823253635*sin(\t r)});
\draw [line width=1.pt] (3.74,2.26)-- (3.2828427124746193,2.7171572875253807);
\draw [line width=1.pt] (4.43,1.57)-- (4.717157287525384,1.2828427124746158);
\draw [line width=1.pt] (4.31,2.31)-- (4.717157287525384,2.717157287525384);
\draw [line width=1.pt] (3.2828427124746193,1.2828427124746193)-- (3.56,1.56);
\draw [->,line width=1.pt] (3.74,2.26) -- (4.43,1.57);
\draw [->,line width=1.pt] (4.31,2.31) -- (3.56,1.56);
\draw [shift={(1.8126315789473681,4.254736842105263)},line width=1.pt]  plot[domain=-2.41080252372329:0.19650508813510978,variable=\t]({1.*0.23183000745065485*cos(\t r)+0.*0.23183000745065485*sin(\t r)},{0.*0.23183000745065485*cos(\t r)+1.*0.23183000745065485*sin(\t r)});
\draw [shift={(9.812631578947368,4.254736842105264)},line width=1.pt]  plot[domain=-2.4108025237232846:0.19650508813510661,variable=\t]({1.*0.23183000745065477*cos(\t r)+0.*0.23183000745065477*sin(\t r)},{0.*0.23183000745065477*cos(\t r)+1.*0.23183000745065477*sin(\t r)});
\draw [->,line width=1.pt] (10.04,4.3) -- (9.8,4.64);
\draw [->,line width=1.pt] (2.04,4.3) -- (1.8,4.64);
\draw [->,line width=1.pt] (1.1788854381999836,1.3577708763999672) -- (2.,3.);
\draw [line width=1.pt] (1.5894427190999922,2.178885438199985)-- (2.8211145618000186,4.642229123600037);

\draw (1-0.06,1+0.36) node[anchor=north] {\scalebox{1.42}{$\moon$}};\draw (1+0.12,1+0.04) node[anchor=north] {\scalebox{0.42}{$\mathbf{0},\mathbf{2}$}};
\draw (3,1+0.3) node[anchor=north] {\scalebox{1.1}{$1$}};
\draw (5,1+0.3) node[anchor=north] {\scalebox{1.1}{$0$}};
\draw (7,1+0.3) node[anchor=north] {\scalebox{1.1}{$1$}};
\draw (9+0.01,1+0.27) node[anchor=north] {\scalebox{0.6}{$\mathbb{Z}^+$}};
\draw (11,1+0.36) node[anchor=north] {\scalebox{1.42}{$\moon$}};\draw (11+0.12,1+0.04) node[anchor=north] {\scalebox{0.42}{$\mathbf{0}$}};
\draw (13,1+0.3) node[anchor=north] {\scalebox{1.1}{$0$}};

\draw (1,3+0.22) node[anchor=north] {\scalebox{0.9}{$\infty$}};
\draw (3,3+0.3) node[anchor=north] {\scalebox{1.1}{$2$}};
\draw (5,3+0.3) node[anchor=north] {\scalebox{1.1}{$3$}};
\draw (7-0.15,3+0.22) node[anchor=north] {\scalebox{0.9}{$\infty$}};\draw (7+0.17,3+0.11) node[anchor=north] {\scalebox{0.42}{$0,1$}};
\draw (9,3+0.3) node[anchor=north] {\scalebox{1.1}{$0$}};
\draw (11,3+0.3) node[anchor=north] {\scalebox{0.6}{$\mathbb{Z}_0^+$}};
\draw (13,3+0.32) node[anchor=north] {\scalebox{0.9}{$\emptyset$}};

\draw (1,5+0.22) node[anchor=north] {\scalebox{0.9}{$\infty$}};
\draw (3+0.01,5+0.27) node[anchor=north] {\scalebox{0.5}{$\mathbb{Z}_0^+\backslash \{2\}$}};
\draw (5,5+0.26) node[anchor=north] {\scalebox{0.9}{$\nym_{2}$}};
\draw (7,5+0.22) node[anchor=north] {\scalebox{0.9}{$\infty$}};
\draw (9-0.05,5+0.22) node[anchor=north] {\scalebox{0.9}{$\infty$}};\draw (9+0.2,5+0.11) node[anchor=north] {\scalebox{0.42}{$1$}};
\draw (11,5+0.36) node[anchor=north] {\scalebox{1.42}{$\moon$}};\draw (11+0.12,5+0.04) node[anchor=north] {\scalebox{0.42}{$\mathbf{1}$}};
\draw (13+0.01,5+0.27) node[anchor=north] {\scalebox{0.5}{$\mathbb{Z}_0^+\backslash \{1\}$}};

\draw (1,7+0.3) node[anchor=north] {\scalebox{1.1}{$0$}};
\draw (3,7+0.25) node[anchor=north] {\scalebox{0.6}{$\{2\}$}};
\draw (5,7+0.3) node[anchor=north] {\scalebox{1.1}{$0$}};
\draw (7,7+0.3) node[anchor=north] {\scalebox{1.1}{$\newmoon$}};
\draw (9,7+0.3) node[anchor=north] {\scalebox{1.1}{$1$}};
\draw (11,7+0.3) node[anchor=north] {\scalebox{1.1}{$1$}};
\draw (13,7+0.3) node[anchor=north] {\scalebox{1.1}{$1$}};

\draw (1,9+0.3) node[anchor=north] {\scalebox{1.1}{$1$}};
\draw (3,9+0.3) node[anchor=north] {\scalebox{1.1}{$3$}};
\draw (5,9+0.3) node[anchor=north] {\scalebox{1.1}{$\fullmoon$}};
\draw (7,9+0.3) node[anchor=north] {\scalebox{1.1}{$\fullmoon$}};
\draw (9,9+0.3) node[anchor=north] {\scalebox{1.1}{$0$}};
\draw (11,9+0.3) node[anchor=north] {\scalebox{1.1}{$0$}};
\draw (13,9+0.3) node[anchor=north] {\scalebox{1.1}{$0$}};

\draw (1.+0.4,1-0.3+0.07) node[anchor=north] {\scalebox{0.8}{$4$}};
\draw (3.+0.4,1-0.3+0.07) node[anchor=north] {\scalebox{0.8}{$1$}};
\draw (5.+0.4,1-0.3+0.07) node[anchor=north] {\scalebox{0.8}{$0$}};
\draw (7.+0.4,1-0.3+0.14) node[anchor=north] {\scalebox{0.8}{$4$}};
\draw (9.+0.4,1-0.3+0.07) node[anchor=north] {\scalebox{0.8}{$1$}};
\draw (11.+0.4,1-0.3+0.07) node[anchor=north] {\scalebox{0.8}{$2$}};
\draw (13.+0.4,1-0.3+0.07) node[anchor=north] {\scalebox{0.8}{$1$}};

\draw (3.+0.3,3-0.3+0.05) node[anchor=north] {\scalebox{0.8}{$2$}};
\draw (5.+0.3,3-0.3+0.05) node[anchor=north] {\scalebox{0.8}{$3$}};
\draw (9.+0.4,3-0.3+0.07) node[anchor=north] {\scalebox{0.8}{$0$}};
\draw (11.+0.4,3-0.3+0.07) node[anchor=north] {\scalebox{0.8}{$4$}};
\draw (13.+0.4,3-0.3+0.07) node[anchor=north] {\scalebox{0.8}{$5$}};

\draw (3.+0.4,5-0.3+0.07) node[anchor=north] {\scalebox{0.8}{$3$}};
\draw (11.+0.4,5-0.3+0.07) node[anchor=north] {\scalebox{0.8}{$3$}};
\draw (13.+0.4,5-0.3+0.07) node[anchor=north] {\scalebox{0.8}{$2$}};

\draw (1.+0.4,7-0.3+0.07) node[anchor=north] {\scalebox{0.8}{$0$}};
\draw (3.+0.4,7-0.3+0.07) node[anchor=north] {\scalebox{0.8}{$4$}};
\draw (5.+0.4,7-0.3+0.07) node[anchor=north] {\scalebox{0.8}{$2$}};
\draw (7.+0.4,7-0.3+0.07) node[anchor=north] {\scalebox{0.8}{$0$}};
\draw (9.+0.4,7-0.3+0.07) node[anchor=north] {\scalebox{0.8}{$3$}};
\draw (11.+0.4,7-0.3+0.07) node[anchor=north] {\scalebox{0.8}{$1$}};
\draw (13.+0.4,7-0.3+0.07) node[anchor=north] {\scalebox{0.8}{$1$}};

\draw (1.+0.4,9-0.3+0.03) node[anchor=north] {\scalebox{0.8}{$1$}};
\draw (3.+0.3,9-0.3+0.03) node[anchor=north] {\scalebox{0.8}{$5$}};
\draw (5.+0.3,9-0.3+0.07) node[anchor=north] {\scalebox{0.8}{$1$}};
\draw (7.+0.4,9-0.3+0.07) node[anchor=north] {\scalebox{0.8}{$1$}};
\draw (9.+0.4,9-0.3+0.07) node[anchor=north] {\scalebox{0.8}{$2$}};
\draw (11.+0.4,9-0.3+0.07) node[anchor=north] {\scalebox{0.8}{$0$}};
\draw (13.+0.4,9-0.3+0.07) node[anchor=north] {\scalebox{0.8}{$0$}};
\end{tikzpicture}}
\end{center}
\vspace{-0.5cm}
    \caption{Example of Algorithm \ref{alg:Gcarry} execution.}
    \label{fig:fig9}
\end{figure}
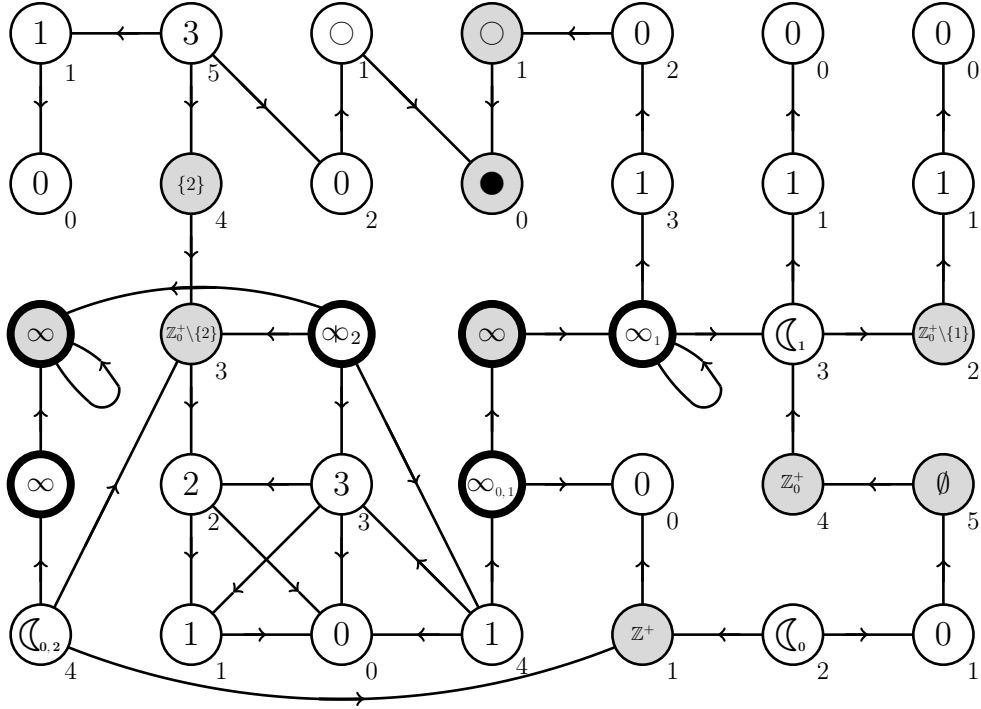

The following result confirms the effectiveness of the algorithm.

\begin{theorem}[Algorithm \ref{alg:Gcarry} halting theorem]\label{th:haltingcarry} Given a cyclic digraph $V$ with gray nodes whose outdegree is at most $1$, Algorithm \ref{alg:Gcarry} ends at a step $t\leqslant\#V$.
\end{theorem}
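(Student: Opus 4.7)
The plan is to establish the bound by showing that at each step before halting, at least one node transitions from the unassigned state $\infty$ to a proper value, and that such proper assignments, once made, persist at all subsequent steps. Combined with the fact that $V$ has exactly $\#V$ nodes, this immediately yields $t\leqslant\#V$.

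The first key lemma is a persistence property: once $\mathcal{G}_n(G)$ or $\mathcal{C}_n(G)$ is different from $\infty$, it remains fixed at every step $n'>n$. I would prove this by a structural case analysis following the assignment rules of Algorithm~\ref{alg:Gcarry}. Gray nodes are the easy case: since their outdegree is at most $1$, the value $\mathcal{C}_{n+1}(G)$ is a deterministic function of the unique option's value, which is stable by the inductive hypothesis, so $\mathcal{C}_{n+1}(G)$ itself stabilizes one step later. For white nodes, each assignment type ($\fullmoon$, $\moon$, and integer $m$) is treated separately, using that the sets $D$ and $P$ in the algorithm can only grow as more options become assigned, and that the reversibility condition on unassigned options guarantees that any integer $m$ already chosen as a mex stays outside $D\cup P$ at later steps.

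The second observation is that if no node's assignment changes from step $n$ to step $n+1$ (i.e., $\mathcal{G}_{n+1}=\mathcal{G}_n$ and $\mathcal{C}_{n+1}=\mathcal{C}_n$ throughout $V$), then the halting condition of the algorithm fires. Hence, at every non-terminating step, the persistence lemma forces the observed change to be an $\infty$-to-proper transition at some previously unassigned node. Since each node can undergo such a transition at most once, the algorithm must terminate within $\#V$ steps, which is precisely the desired bound.

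I expect the main obstacle to be the persistence argument for white nodes assigned an integer $m$ via the reversibility branch. The subtlety is that an unassigned option $G'$ of $G$ may in the future receive a lunar value, or remain in a cyclic zone, rather than resolve to a concrete integer; one has to verify, by analyzing how $\mathrm{Cr}$ propagates through (potentially cyclic) gray chains, that in every such scenario the value $m$ fails to enter $G$'s set $D\cup P$ later, so that $\mex(D\cup P)$ continues to equal $m$. A careful induction on the recursion depth of $\mathrm{Cr}$ together with the outdegree-$1$ restriction on gray nodes should suffice, and once this subtlety is resolved, the counting argument is immediate.
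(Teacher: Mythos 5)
Your proposal is correct and follows essentially the same route as the paper's proof: a persistence lemma showing that any assignment distinct from $\infty$ is permanent (gray nodes via their unique previously-assigned option, white nodes via the mex rule together with reversibility, with the $\moon$ case handled separately), followed by the observation that each non-halting step must convert at least one node from $\infty$, giving $t\leqslant\#V$. The subtlety you flag about $m$ never entering $D\cup P$ later is exactly what the paper's induction on the assignment step is covering.
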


\begin{proof}
Let us begin by proving that once an assignment occurs, it is permanent and does not change anymore during the execution of the algorithm. Observe first that the assignment of a gray node is done for a terminal node or from its option, \emph{provided that this have been previously assigned}. This means that if such an assignment occurs, by induction on the assignment step, it is permanent.

On the other hand, the assignment of a white node is also done for a terminal node or from information gathered \emph{from all its options}. It takes into account the mex rule applied to previously assigned options and reversibility of unassigned ones to previously assigned nodes. The only exception occurs when the assignment is with $\moon\!\!$, when $\mathbb{Z}_0^{+}\subseteq D\cup P$, in which case there is no need to consider the unassigned options, \emph{either in that step or in subsequent ones}. Therefore, again by induction on the assignment step, any assignment of a white node is also permanent. In summary, during the algorithm execution, if an initial $\infty$ changes to data distinct from $\infty$, this occurs \emph{exactly once}.

Given the finiteness of the digraph $V$, the ending condition of Algorithm \ref{alg:Gcarry} must occur in a finite number of steps $t$, where the slowest case occurs when, at each step, \emph{exactly one node} is assigned with data distinct from $\infty$. Hence, $t$ cannot exceed $\#V$.
\end{proof}

As usual, a theorem follows that relates the $\mathcal{G}$-value of a game $G=(V,x^\circ)$ (white node) to its outcome. However, the information $\mathcal{C}(G)$ respecting a game $G=(V,x^{\textcolor{gray}{\bullet}})$  (gray node) deserves a comment. Not having an absorbing value $\newmoon$ or $\fullmoon$, if it is a set $S$, this information indicates that if a player has to play at this node, they lose in the presence of an element of $S$ and win in the presence of an element of $\mathbb{Z}_0^{+}\backslash S$. In contrast, if $\mathcal{C}(G)$ is $\infty$, there are elements of $\mathbb{Z}_0^{+}$ whose presence results in a tie.

\begin{theorem}[Outcome of a cyclic impartial game with carry-on moves and a given $\mathcal{G}$-value]\label{th:cycliccarryoutcome} If $G=(V,x^\circ)$ is a game, then
\begin{enumerate}
  \item[\scalebox{0.5}{$\bullet$}] $o(G)=\mathscr{P}$ if and only if $\mathcal{G}(G)=0$;
  \item[\scalebox{0.5}{$\bullet$}] $o(G)=\mathscr{N}$ if and only if $\mathcal{G}(G)\in\{\moon\!\!_{a,b,\ldots},\emph{\fullmoon}\}\cup\mathbb{Z}^{+}$ or\\

       \vspace{-0.3cm}
      $\,$\hspace{4.55cm}$\mathcal{G}(G)=\infty_{a,b,\ldots}$, with $0\in \{a,b.\ldots\}$ or\\

       \vspace{-0.3cm}
      $\,$\hspace{4.55cm}$\mathcal{G}(G)=\nym_{f}$, with $f\neq 0$;
  \item[\scalebox{0.5}{$\bullet$}] $o(G)=\mathscr{D}$ if and only if $\mathcal{G}(G)=\infty_{a,b,\ldots}$, with $0\not\in \{a,b.\ldots\}$ or\\

       \vspace{-0.3cm}
      $\,$\hspace{4.4cm}$\mathcal{G}(G)=\nym_{0}$.
\end{enumerate}
\end{theorem}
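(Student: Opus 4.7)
The plan is to prove Theorem~\ref{th:cycliccarryoutcome} by strong induction, jointly with a companion claim describing the outcome of gray nodes in terms of their $\mathcal{C}$-value. The induction is ordered by the execution of Algorithm~\ref{alg:Gcarry}: first the terminal white nodes (step $0$), then the nodes assigned a finite value at step $t\geqslant 1$, and finally the nodes that retain $\mathcal{G}_t(G)=\infty$ (cyclic-zone nodes) as a closing phase. I establish the three ``forward'' implications ($\mathcal{G}(G)=0\Rightarrow\mathscr{P}$; each $\mathscr{N}$-family$\Rightarrow\mathscr{N}$; each $\mathscr{D}$-family$\Rightarrow\mathscr{D}$); the three converses follow automatically since the Grundy classification and the outcome classification each partition the set of games.

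For the inductive step on finite assignments, I split by the assignment rule of Algorithm~\ref{alg:Gcarry}. If $\mathcal{G}(G)=\fullmoon$, a gray option of $G$ chains to $\newmoon$, stranding the opponent with no legal reply, so $o(G)=\mathscr{N}$. If $\mathcal{G}(G)\in\mathbb{Z}^+$ or $\mathcal{G}(G)=\moon_{a,b,\ldots}$, then $0\in D\cup P$ supplies a winning move: a white option of Grundy value $0$ (a $\mathscr{P}$-option by induction) or a gray option $G'$ with $0\in\mathcal{C}(G')$, where the \textsc{Boolean} routine Cr, unwound along the forced carry-on chain (using $G=G+\cgstar0$), exhibits a concrete winning forcing sequence. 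If $\mathcal{G}(G)=0$ via $\mex$, so $0\notin D\cup P$, I show every option of $G$ is $\mathscr{N}$: the assigned white options are in $\mathbb{Z}^+\cup\{\fullmoon,\moon_{a,b,\ldots}\}$, hence $\mathscr{N}$ by induction; the assigned gray options have $0\notin\mathcal{C}(G')$ and, by the gray companion claim and the negation of Cr, are $\mathscr{N}$; and every unassigned option admits, by the reversibility clause, a follower of Grundy value $0$ which by induction is $\mathscr{P}$, making the unassigned option $\mathscr{N}$. Thus $o(G)=\mathscr{P}$.

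For the cyclic-zone closing phase, each such $G$ has $o(G)\in\{\mathscr{N},\mathscr{D}\}$, because the persistent cycle precludes any forced terminal defeat for the current player (formally via the standard loopy-game fixpoint semantics). If $\mathcal{G}(G)=\infty_{a,b,\ldots}$ with $0\in\{a,b,\ldots\}$, then $0\in\mathbf{D}(G)$ yields a direct or carry-on path to a white Grundy-$0$ follower ($\mathscr{P}$ by induction), so $o(G)=\mathscr{N}$. If $\mathcal{G}(G)=\nym_f$ with $f\neq 0$, the unique forcing chain terminates at a white $*f$-node ($\mathscr{N}$ by induction); an alternating-turn analysis along the carry-on chain shows the initial gray option $G'$ is a $\mathscr{P}$-position for the opponent, so moving to it wins and $o(G)=\mathscr{N}$. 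The two drawing cases ($\nym_0$ and $\infty_{a,b,\ldots}$ with $0\notin\{a,b,\ldots\}$) admit no $\mathscr{P}$-option, yet the cycle lets the player avoid defeat, so $o(G)=\mathscr{D}$.

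The main obstacle is the tight interplay between the gray-node $\mathcal{C}$-classification, the Cr subroutine, and reversibility: each assertion ``$k\in\mathcal{C}(G')$ protects against $*k$'' must be faithfully converted into a single-game winning strategy, while ``$k\notin\mathcal{C}(G')$'' must be checked to rule out drawing escapes in the $\mathscr{P}$-case. A secondary subtlety in the $\nym_f$ argument is ensuring that the carry-on chain cannot be diverted by the opponent into a parasitic cycle, which is underwritten precisely by the outdegree-at-most-$1$ hypothesis on gray nodes, forcing each response along the chain to be unique.
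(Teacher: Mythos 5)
Your overall skeleton matches the paper's proof: three forward implications with the converses obtained for free because the $\mathcal{G}$-value families and the outcome classes each partition the games; induction on the assignment step for the finitely-assigned nodes (with $\fullmoon$ handled by checkmate, $0\in D\cup P$ handled by unwinding Cr into a forcing sequence, and the $\mathscr{P}$-case handled via reversibility of unassigned options); and a separate, non-inductive argument for the cyclic zone. The explicit companion claim for gray nodes is a reasonable way to package what the paper uses implicitly.

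The genuine gap is in the cyclic-zone phase, which is where the real content of the theorem lies. You assert twice, without argument, exactly what has to be proved: first that ``the persistent cycle precludes any forced terminal defeat for the current player (formally via the standard loopy-game fixpoint semantics),'' and then that in the two drawing cases ``the cycle lets the player avoid defeat.'' Neither claim can be imported from SFPT, because the cyclic zones of Algorithm~\ref{alg:Gcarry} are not the cyclic zones of Algorithm~\ref{alg:Gcyclic}: the assignment rule now folds protection sets into the mex and uses Cr-reversibility, so a node retaining $\infty$ here is a different object, and a node sitting on a cycle is not thereby safe --- all of its options could in principle be $\mathscr{N}$-positions, making it a $\mathscr{P}$-position. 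The paper closes this hole with a specific argument you are missing: since $G$ was \emph{not} assigned a finite value (and is neither $\fullmoon$ nor lunar), the next player cannot force a move to $0$ or to $\newmoon$; if every forcible move led to an $\mathscr{N}$-position, $G$ would be a $\mathscr{P}$-position and the algorithm would have assigned it $0$, a contradiction; hence there must exist a move to a follower with value $\infty_{a,b,\ldots}$ with $0\notin\{a,b,\ldots\}$ or $\nym_0$ --- a follower \emph{in exactly the same situation} --- and repeating this ``saving move'' forever is what produces the draw. Without exhibiting that perpetually available saving move, you have shown neither that the drawing player can sustain the non-loss indefinitely nor that a cyclic-zone node cannot be a $\mathscr{P}$-position, and the partition argument for the converses then collapses as well.
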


\begin{proof}
It suffices to prove the three implications
\begin{itemize}
  \item[\scalebox{0.5}{$\bullet$}] $\mathcal{G}(G)=0\implies o(G)=\mathscr{P}$,
  \item[\scalebox{0.5}{$\bullet$}] $\mathcal{G}(G)\in\{\moon\!\!_{a,b,\ldots},\emph{\fullmoon}\}\cup\mathbb{Z}^{+}$ or $\mathcal{G}(G)=\infty_{a,b,\ldots}$, with $0\in \{a,b.\ldots\}$ or
  \item[] $\mathcal{G}(G)=\nym_{f}$, with $f\neq 0$ $\implies o(G)=\mathscr{N}$, and
  \item[\scalebox{0.5}{$\bullet$}] $\mathcal{G}(G)=\infty_{a,b,\ldots}$, with $0\not\in \{a,b.\ldots\}$ or $\mathcal{G}(G)=\nym_{0}$ $\implies o(G)=\mathscr{D}$.
\end{itemize} Since the hypotheses of these implications cover all possible $\mathcal{G}$-values, if the converse of one of them were not satisfied, it would imply the failure of one of the others.

Let us start by proving the first implication. If $G=(V,x^\circ)$ is terminal, the implication is trivially satisfied. Otherwise, at the assignment step, the following occurs:
\begin{itemize}
  \item[] From the assigned options, we have $\mex(D\cup P)=0$;
  \item[] From any unassigned option there is a forcing sequence -- possibly an immediate reply -- resulting in a move to $0$ or to $\newmoon$, i.e., reversibility happens.
\end{itemize}

Thus, after a player makes a move in $G$, it is the opponent who can force a move either to $0$ or to $\newmoon$. If the move is to an assigned node, this happens because $\mex(D\cup P)=0$; if the move is to an unassigned node, it occurs due to reversibility. The opponent's move to $0$ is, by induction on the assignment step, a move to a $\mathcal{P}$-position and therefore a winning move, whereas the opponent's move to $\newmoon$ is trivially winning. Hence, the player who makes a move in $G$ loses.

Moving on to the second implication, if the Grundy value of $G$ is $\fullmoon$, the implication is satisfied since the next player has a move to a node with Grundy value $\newmoon$. If the Grundy value of $G$ is $\moon\!\!_{a,b,\ldots}$, then at the assignment step we have $\mathbb{Z}_0^{+} \subseteq D \cup P$ for the assigned options. Consequently, the next player can force a move to $0$. The argument extends unchanged to the cases $\mathcal{G}(G) \in \mathbb{Z}^{+}$ and $\mathcal{G}(G) = \infty_{a,b,\ldots}$, with $0 \in \{a,b,\ldots\}$. In these cases as well, the next player can force a move to $0$. Since nodes with Grundy value $0$ have already been shown to be $\mathcal{P}$-positions, the next player wins. Finally, if $\mathcal{G}(G) = \nym_f$, with $f \neq 0$, the next player can force \emph{the opponent} to move to a node whose Grundy value is a positive integer, which has already been shown to be an $\mathcal{N}$-position. Since it is the opponent who moves to that node, the next player wins as well.

Proving the third implication can no longer be based on induction on the assignment step, since, when facing a cyclic zone without a winning exit, the optimal play requires moves to unassigned nodes. The argument must therefore be a play-based argument.

First of all, if $\mathcal{G}(G) = \infty_{a,b,\ldots}$ with $0 \notin \{a,b,\ldots\}$, or if $\mathcal{G}(G) = \nym_0$, then the next player cannot force a move to $0$. For the first case, this is due to the fact that $0 \notin \{a,b,\ldots\}$. For the second case, if the next player could force a move to a node with Grundy value $0$, then we would have $\mathbb{Z}_0^+ \subseteq \mathbf{D}(G) \cup \mathbf{P}(G)$, and $G$ would be a moon rather than a nymphet. On the other hand, the next player cannot force a move to $\newmoon$ either; if this were possible, the Grundy value of $G$ would be $\fullmoon$.

It follows that the next player can potentially force moves only to nodes with Grundy values $\fullmoon$, $\moon_{\!\!\!a,b,\ldots}$, $n>0$, $\infty_{a,b,\ldots}$, or $\nym_f$. As we have seen, the first three cases lead to a loss, since they are $\mathcal{N}$-positions. The last two cases also lead to a loss if $0 \in \{a,b,\ldots\}$ or $f \neq 0$, as was previously observed. If only these cases existed, $G$ would be a $\mathcal{P}$-position and, as such, its Grundy value would be zero, which is not true. Therefore, there must be at least one way for the next player to force a move to a node with Grundy value $\infty_{a,b,\ldots}$ with $0 \notin \{a,b,\ldots\}$ or to a node with Grundy value $\nym_0$, and this must be the choice to avoid a loss. But in that case, the choice leads to a node \emph{under exactly the same conditions as the previous one}, and in response the opponent must repeat the strategy to avoid defeat. In other words, the ``saving strategy'' will always be repeated, leading the game to a draw. Hence, we have $o(G) = \mathscr{D}$.
\end{proof}

Regarding the following theorem, for simplicity, we write $\moon\!\!_{\mathbf{F}}$ instead of $\moon\!\!_{\mathbf{F}(G)}$, and $\infty_{\mathbf{D}}$ instead of $\infty_{\mathbf{D}(G)}$.

\begin{theorem}[$\mathcal{G}$-value of a disjunctive sum of cyclic impartial games with carry-on moves]\label{th:cycliccarrysum} If $G=(V,x^\circ)$ and $H=(U,y^\circ)$ are two games, then $\mathcal{G}(G+H)$ can be obtained from $\mathcal{G}(G)$ and $\mathcal{G}(H)$ using the following table:\\

\hspace{2.8cm}$\overbrace{\quad\quad\quad\quad\quad\quad\quad\quad\quad\quad\quad\quad\quad\quad\quad\quad\quad\quad\quad\,}^{\mathcal{G}(H)}$
\begin{center}
\begin{tabular}{|c||c|c|c|c|c|}
  \hline
  $+$ & $\quad\fullmoon\quad$ & $\infty_{\mathbf{D}'}$ & $\moon_{\!\!\!\mathbf{F}'}$ & $\nym_{f'}$ & $n'$ \\
\hhline{|=#=|=|=|=|=|}
  $\fullmoon$ & $\fullmoon$ & $\fullmoon$ & $\fullmoon$ & $\fullmoon$ & $\fullmoon$ \\
    \hline
  $\infty_{\mathbf{D}}$ & $\fullmoon$ & $\infty$ & $\infty_{\mathbf{D}\oplus\mathbf{F}'}$ & $\infty_{f'\oplus\mathbf{D}}$ & $\infty_{n'\oplus\mathbf{D}}$ \\
    \hline
  $\moon_{\!\!\!\mathbf{F}}$ & $\fullmoon$ & $\infty_{\mathbf{F}\oplus\mathbf{D}'}$ &  $\moon_{\!\!\!\mathbf{F}\oplus\mathbf{F}'}$ & $\moon_{\!\!\!f'\oplus\mathbf{F}}$ & $\moon_{\!\!\!n'\oplus\mathbf{F}}$   \\
    \hline
  $\nym_{f}$ & $\fullmoon$ & $\infty_{f\oplus\mathbf{D}'}$ & $\moon_{\!\!\!f\oplus\mathbf{F}'}$ & $\nym_{f\oplus f'}$ & $\nym_{f\oplus n'}$ \\
    \hline
  $n$ & $\fullmoon$ & $\infty_{n\oplus\mathbf{D}'}$ & $\moon_{\!\!\!n\oplus\mathbf{F}'}$ &  $\nym_{n\oplus f'}$  & $n\oplus n'$  \\
  \hline
\end{tabular}
\end{center}
\end{theorem}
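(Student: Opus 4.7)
The plan is to define a candidate value function $\mathcal{G}^*$ on the cartproduct $V\ast U$ by applying the table entry-wise to $\mathcal{G}(V,x)$ and $\mathcal{G}(U,y)$, and then to verify that $\mathcal{G}^*$ satisfies the assignment conditions of Algorithm~\ref{alg:Gcarry} at every node of $V\ast U$. By uniqueness of the algorithmic output (a consequence of Theorem~\ref{th:haltingcarry} applied to $V\ast U$, whose gray nodes still satisfy the outdegree-at-most-one condition because a gray node $(x,y)$ of the product inherits its unique outgoing edge from the gray coordinate), this verification forces $\mathcal{G}^*=\mathcal{G}(G+H)$. The induction proceeds on the sum of the assignment steps of $x$ in $V$ and of $y$ in $U$, so that every algorithmically earlier sum is already available when treating $(x,y)$.

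I would first discharge the easy cases. If $\mathcal{G}(G)=\fullmoon$, then $G$ has a gray option assigned $\newmoon$, and in the sum the same local move produces a gray option of $G+H$ assigned $\newmoon$, forcing $\mathcal{G}(G+H)=\fullmoon$; symmetrically for the $\fullmoon$ column. The integer-integer cell reduces to Theorem~\ref{th:classicalsum}: when both components have finite Grundy values, no entailing protection becomes active and no reversibility through unassigned nodes is needed, so the short-game argument extends verbatim. The four mixed cells pairing an integer $n$ with a non-integer summand are handled by the observation that $k\mapsto n\oplus k$ is a bijection on $\mathbb{Z}_0^+$ which transports direct exits to direct exits and carry-on forcing chains to carry-on forcing chains; the sets $\mathbf{D}$ and $\mathbf{F}$ of the sum therefore transform by XOR with $n$, matching the table.

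The core of the proof lies in the purely non-integer block. For $\moon_{\!\!\!\mathbf{F}}+\moon_{\!\!\!\mathbf{F}'}$, the saturations $\mathbb{Z}_0^+\subseteq D\cup P$ on each component combine to give the same saturation for the sum, establishing the $\moon$ label; the forcing set is $\mathbf{F}\oplus\mathbf{F}'$, obtained by chaining a carry-on sequence in $G$ that forces $k_1$ on the opponent with an analogous chain in $H$ that forces $k_2$, producing $k_1\oplus k_2$. For $\infty_{\mathbf{D}}+\infty_{\mathbf{D}'}$, the key observation is that $\mathbf{F}(G)=\mathbf{F}(H)=\emptyset$ (else the labels would be $\moon$ or $\nym$), so no forcing chain can exit either cyclic zone, and a single direct exit leaves the other piece stuck in its cycle; the sum is therefore $\infty$ with empty direct set. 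The remaining cells combine these two mechanisms, pairing a direct exit on one side with a forcing sequence on the other.

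The main obstacle will be showing that the $\mathbf{D}$ and $\mathbf{F}$ sets of the cartproduct decompose exactly as the table claims. Establishing identities such as $\mathbf{F}(V\ast U,(x,y))=\mathbf{F}(V,x)\oplus\mathbf{F}(U,y)$ and the analogous ones for $\mathbf{D}$ requires an induction on the length of the carry-on chain inside $V\ast U$: the recursive clauses of Definitions~\ref{def:directvalues} and~\ref{def:forcingvalues} must be applied to the product's gray options, which split into ``gray-white'' and ``white-gray'' types according to which coordinate is gray, and the outdegree-one constraint has to be tracked through each step of the chain. This bookkeeping, together with the algorithmic verification of the mex, reversibility, and protection conditions across all twenty-five cells of the table, is the principal technical work of the proof.
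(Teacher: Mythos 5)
Your overall strategy---define a candidate labeling of the cartproduct $V\ast U$ by the table and verify it against the local conditions of Algorithm~\ref{alg:Gcarry}---is genuinely different from the paper's proof, which is a case-by-case strategic analysis of play (forcing sequences, exits, and draw-preserving moves). Unfortunately the linchpin of your plan, ``by uniqueness of the algorithmic output\ldots this verification forces $\mathcal{G}^*=\mathcal{G}(G+H)$,'' is false for cyclic digraphs, and Theorem~\ref{th:haltingcarry} does not supply it: that theorem only bounds the number of steps. Local consistency with the mex/reversibility rules does not determine the algorithm's output. Take the two-node cycle $a\to b\to a$ with no other arcs: assigning $\mathcal{G}(a)=1$, $\mathcal{G}(b)=0$ satisfies the mex condition at every node, yet the algorithm assigns neither node and the correct values are $\infty_{\emptyset}$ (both positions are draws). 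So checking that the table is a ``fixed point'' of the assignment rules proves nothing; you must show the table values are what the iteration actually produces. Your induction ``on the sum of the assignment steps of $x$ and $y$'' works only for the cells where both coordinates are eventually assigned ($\fullmoon$, $\moon$, integers); for the $\infty_{\mathbf{D}}$ and $\nym_f$ cells the coordinate is \emph{never} assigned, the induction measure is undefined, and what must be shown is the co-inductive statement that $(x,y)$ is never assigned either---together with a play-based argument that the resulting position is a draw rather than a loss.

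That last point is where the real content of the paper's proof lives and where your sketch is silent. To show, e.g., that $\infty_{\mathbf{D}}+*n$ with $n\notin\mathbf{D}$ is a draw, one needs a drawing move to exist; the paper extracts it from the contrapositive of the assignment rule (``otherwise $G$ would have been assigned $n$''), i.e., non-assignment guarantees an option to some $\nym_n$ or $\infty_{\mathbf{D}''}$ with $n\notin\mathbf{D}''$, and notes that the drawing move may even have to be made in the \emph{other} component. The same principle drives the hardest cell, $\infty_{\mathbf{D}}+\infty_{\mathbf{D}'}$, where with $\mathbf{D}=\mathbf{D}'$ and $m=\mex(\mathbf{D})$ one repeatedly invokes it to perpetuate the draw. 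Your proposal's handling of that cell (``a single direct exit leaves the other piece stuck'') only explains why the next player cannot \emph{win}; it does not rule out that every move \emph{loses}, which is exactly what distinguishes $\infty$ from $0$ here. Finally, note that the adornments $\mathbf{D}$ and $\mathbf{F}$ are computed by the separate Algorithms~\ref{alg:direct} and~\ref{alg:forcingvalues} after the main iteration, so they are not part of any fixed-point condition you could verify locally; the identities you want (e.g.\ $\mathbf{F}(G+H)=\mathbf{F}(G)\oplus\mathbf{F}(H)$ in the appropriate cells, but $\mathbf{D}(G+H)=\emptyset$ in the $\infty+\infty$ cell) must be proved by the chain-tracking induction you mention, and that part of your plan is sound but is not a substitute for the missing draw arguments.
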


\vspace{-2.95cm}
\hspace{-0.6cm}
$\begin{array}{r@{\hspace{0.5em}}l}
\raisebox{0em}{\scalebox{0.75}{$\mathcal{G}(G)$}} & \left\{ \rule{0pt}{3.4em} \right.
\end{array}$

\vspace{0.8cm}
\begin{proof}
If the Grundy value of one of the summands is $\fullmoon$, then there exists a sequence of carry-on moves leading to a final winning move to $\newmoon$. Naturally, this sequence can be carried out regardless of what the other components may be, which justifies the absorbing nature of $\fullmoon$, and consequently the first row of the addition table.

The simpler cases
\begin{itemize}
  \item [] $\mathcal{G}(G+H)=n\oplus m$ when  $\mathcal{G}(G)=n$ and $\mathcal{G}(H)=m$,
  \item [] $\mathcal{G}(G+H)=\moon_{\!\!\!n\oplus\mathbf{F}}$ when  $\mathcal{G}(G)=\moon_{\!\!\!\mathbf{F}}$ and $\mathcal{G}(H)=n$, and
  \item [] $\mathcal{G}(G+H)=\moon_{\!\!\!\mathbf{F}\oplus\mathbf{F}'}$ when  $\mathcal{G}(G)=\moon_{\!\!\!\mathbf{F}}$ and $\mathcal{G}(H)=\moon_{\!\!\!\mathbf{F}'}$
\end{itemize}
do not involve cyclic zones and behave essentially as in LNST. However, in that theory there is only one lunar Grundy value, and the moon is simply absorbing. In the present theory one must also take the adorns into account. When $\mathcal{G}(G)=\moon_{\!\!\!\mathbf{F}}$ and $\mathcal{G}(H)=n$, the $H$-component may allow the next player to force the opponent to move to a nimber. Yet such a move must be to $*n$ again; otherwise $H$ would not have Grundy value $n$. Thus, any forcing sequence initiated in $G$ will always coexist with a $*n$, and therefore the only {\sc nim} values to which the opponent can be forced are the elements of $n\oplus\mathbf{F}$. The case $\mathcal{G}(G)=\moon_{\!\!\!\mathbf{F}}$ and $\mathcal{G}(H)=\moon_{\!\!\!\mathbf{F}'}\,$ is similar. The set of {\sc nim} values to which the opponent may be forced is obtained by combining two independent forcing sequences, one in each component.

Let us now move on to the cases in which one of the components lies in a cyclic zone and the Grundy value of the other is a {\sc nim} value. If $\mathcal{G}(G)=\infty_{\mathbf{D}}$ and $\mathcal{G}(H)=n$, then, as already mentioned, any exit from the cyclic zone will always coexist with a $*n$. For this reason, the exits correspond to the elements of $n\oplus\mathbf{D}$. Thus, if $n \in \mathbf{D}$, the next player has a winning move, and if $n \not \in \mathbf{D}$, they do not. It turns out that, in the second case, the next player must have a way to force a draw; otherwise, the Grundy value of $G$ would be $n$. This explains why $\mathcal{G}(G+H)=\infty_{n \oplus \mathbf{D}}$. It remains only to note that the way to force a draw in this case requires maintaining the cyclic zone, but this \emph{does not mean that the drawing move must be made in} $G$; in some situations, the only way to draw is by making a move in $H$.

If $\mathcal{G}(G)=\nym_{f}$ and $\mathcal{G}(H)=n$, the analysis is very similar. The next player may exit the cyclic zone, forcing the opponent to play to $*f + *n$. This forcing sequence will only fail to be winning if $f = n$. If that is the case, the next player must have a way to force a draw while maintaining the cyclic zone; otherwise, the Grundy value of $G$ would be $f$. This explains why $\mathcal{G}(G+H)=\nym_{f \oplus n}$. The move that forces a draw must be made in $G$, so this component must allow a move to a Grundy value $\nym_{f}$ or to a Grundy value $\infty_{\mathbf{D}}$, where $n \not \in \mathbf{D}$. But note that at least one of the two possibilities must always exist;, otherwise the Grundy value of $G$ would be $n$.

The cases with both a cyclic value and a lunar value can be discussed using what we already know. If $\mathcal{G}(G)=\infty_{\mathbf{D}}$ and $\mathcal{G}(H)=\moon_{\!\!\!\mathbf{F}}$, then the next player cannot win by exiting the cyclic zone with the moon present in the disjunctive sum. Thus, in order to attempt to win, they must force the opponent to move to a node with Grundy value in $\mathbf{F}$ and exit the cyclic zone to a node with Grundy value in $\mathbf{D}$. Therefore, they will only be able to win when $0\in \mathbf{D}\oplus \mathbf{F}$, which occurs only when $\mathbf{F}\cap \mathbf{D}\neq \emptyset$. If this does not occur, then there exists $f\in \mathbf{F}$ such that $f\not \in \mathbf{D}$. In that case, as previously shown, the next player can force a draw by making the opponent move to $G+*f$. This explains why $\mathcal{G}(G+H)=\infty_{f \oplus \mathbf{D}}$.

If $\mathcal{G}(G)=\nym_{f}$ and $\mathcal{G}(H)=\moon_{\!\!\!\mathbf{F}}$, since the opponent can be forced to move to $*f$ in the first component, we have
\begin{itemize}
  \item [] $\mathbf{D}(G+H)=f\oplus \mathbf{D}(H)$ and
  \item [] $\mathbf{P}(G+H)=f\oplus \mathbf{P}(H)$.
\end{itemize}
In other words, $$\mathbf{D}(G+H)\cup \mathbf{P}(G+H) = f\oplus (\mathbf{D}(H)\cup \mathbf{P}(H)).$$ Given that the Grundy value of $H$ is $\moon_{\!\!\!\mathbf{F}}$, we know that $\mathbb{Z}_0^{+}\subseteq \mathbf{D}(H)\cup \mathbf{P}(H)$. Consequently, we also have $\mathbb{Z}_0^{+}\subseteq f\oplus (\mathbf{D}(H)\cup \mathbf{P}(H)) = \mathbf{D}(G+H)\cup \mathbf{P}(G+H)$ and $G+H$ has a lunar Grundy value. This explains why $\mathcal{G}(G+H)=\moon_{\!\!\!f\oplus \mathbf{F}}$.

It remains only to analyze the case in which both components have cyclic values. If $\mathcal{G}(G)=\nym_{f}$ and $\mathcal{G}(H)=\infty_{\mathbf{D}}$, then the next player can make a move to any Grundy value in $f\oplus \mathbf{D}$. To do this, it suffices to force the opponent to move to $*f$ in the first component and then to exit to a node whose Grundy value lies in $\mathbf{D}$ in the second. Thus, if $f \in \mathbf{D}$, the next player wins. On the other hand, if $f \not \in \mathbf{D}$, as already shown, it is enough for the next player to force the opponent to move to $*f + H$ in order to draw. This explains why $\mathcal{G}(G+H)=\infty_{f\oplus \mathbf{D}}$.

If $\mathcal{G}(G)=\nym_{f}$ and $\mathcal{G}(H)=\nym_{f'}$, then the next player can force the opponent to move to $*(f\oplus f')$ by combining two forcing sequences, one in each component. Thus, if $f \neq f'$, the next player wins. If $f' = f$, then, as we have seen earlier, the next player can draw by forcing the opponent to move to $*f + H$.  This explains why $\mathcal{G}(G+H)=\nym_{f\oplus f'}$.

If $\mathcal{G}(H)=\infty_{\mathbf{D}}$ and $\mathcal{G}(H)=\infty_{\mathbf{D}'}$, then, on the one hand, it is not possible to force the opponent to move to a nimber; on the other hand, the next player also cannot exit the cyclic zone to a nimber, because, as in the SFPL, they cannot do so from both components simultaneously in a single turn. Nevertheless, it is possible to argue that the next player has a way to force a draw. If $\mathbf{D} \neq \mathbf{D}'$, then without loss of generality we may assume that there exists $n \in \mathbf{D}$ such that $n \notin \mathbf{D}'$. In that case, as already shown, moving to $*n+H$ results in a draw. On the contrary, if $\mathbf{D} = \mathbf{D}'$, let $n = \mex(\mathbf{D})$. We begin by noting that the next player must have a way to move in $G$ to a follower with Grundy value $\nym_{n}$ or Grundy value $\infty_{\mathbf{D}''}$ with $n\not \in \mathbf{D}''$; otherwise $G$ would have Grundy value $n$ (all unsigned options would be reversible). The first possibility, as already seen, leads to a draw; the second possibility leads to a situation in which the Grundy value of the first component is $\infty_{\mathbf{D}''}$ and that of the second is $\infty_{\mathbf{D}'}$. In this latter case, however, \emph{we are exactly in the same initial conditions}, and the whole process repeats, once again leading to a draw. This explains why $\mathcal{G}(G+H)=\infty$.
\end{proof}

\section{Case study: {\sc green-lime hackenbush}}
\label{sec:hackenbush}

In this section, we introduce {\sc green-lime hackenbush}, a variant of the classical ruleset {\sc green hackenbush} \citep{BCG001,Con001}. The positions in {\sc green-lime hackenbush} are childish drawings, like the one shown in Figure~\ref{fig:fig11}. There are edges and nodes, and everything is connected to the ground, which may be thought of as a single node. Moreover, each edge joins two nodes, and its color may be either green or lime.

On their turn, players may make one of two types of moves: a \emph{removal} or a \emph{toggle}. If the player chooses to remove an edge, then after the removal anything no longer connected to the ground falls away. If the removed edge is lime, the turn passes to the opponent; if the removed edge is green \emph{and some lime edge also falls away, the player must move again} (a carry-on move). In the latter case, if no lime edge falls away, the turn passes to the opponent.

If the player chooses to toggle, \emph{there must be at least one lime edge connected to a green edge}. In that case, if they wish, the player may recolor the lime edge to green while simultaneously recoloring an adjacent green edge to lime. This type of move does not cause any edges to be removed and may give rise to a cyclic effect. This rule was inspired by the ruleset {\sc loopy hackenbush}, proposed in \cite{BCG001}. Figure~\ref{fig:fig10} illustrates a position together with its options.

\begin{figure}[htb!]
\begin{center}
\scalebox{0.9}{
\definecolor{qqwuqq}{rgb}{0.,0.39215686274509803,0.}
\definecolor{qqffqq}{rgb}{0.,1.,0.}
\begin{tikzpicture}
\clip(1.,-0.5) rectangle (16.,4.5);
\draw [line width=1.pt] (1.5,1.)-- (2.5,1.);
\draw [line width=3.6pt,color=qqffqq] (2.,1.)-- (2.,2.);
\draw [line width=3.6pt,color=qqwuqq] (2.,2.)-- (2.,3.);
\draw [line width=3.6pt,color=qqffqq] (2.,3.)-- (2.,4.);
\draw [shift={(4.855122950819671,3.24811475409836)},line width=2.pt]  plot[domain=2.420348508283319:3.5270449523955163,variable=\t]({1.*1.1386688210463614*cos(\t r)+0.*1.1386688210463614*sin(\t r)},{0.*1.1386688210463614*cos(\t r)+1.*1.1386688210463614*sin(\t r)});
\draw [line width=2.pt] (3.64,2.48)-- (3.8,2.82);
\draw [shift={(4.855122950819674,1.7518852459016416)},line width=2.pt]  plot[domain=2.7561403547840717:3.8628367988962657,variable=\t]({1.*1.1386688210463645*cos(\t r)+0.*1.1386688210463645*sin(\t r)},{0.*1.1386688210463645*cos(\t r)+1.*1.1386688210463645*sin(\t r)});
\draw [line width=2.pt] (3.64,2.52)-- (3.8,2.18);
\draw [line width=3.6pt,color=qqffqq] (5.,1.)-- (5.,2.);
\draw [line width=3.6pt,color=qqffqq] (5.,2.)-- (5.,3.);
\draw [line width=3.6pt,color=qqwuqq] (5.,3.)-- (5.,4.);
\draw [line width=1.pt] (4.5,1.)-- (5.5,1.);
\draw [line width=3.6pt,color=qqwuqq] (7.,1.)-- (7.,2.);
\draw [line width=3.6pt,color=qqffqq] (7.,2.)-- (7.,3.);
\draw [line width=3.6pt,color=qqffqq] (7.,3.)-- (7.,4.);
\draw [line width=1.pt] (6.5,1.)-- (7.5,1.);
\draw [line width=1.pt] (8.5,1.)-- (9.5,1.);
\draw [line width=3.6pt,color=qqffqq] (11.,1.)-- (11.,2.);
\draw [line width=3.6pt,color=qqwuqq] (11.,2.)-- (11.,3.);
\draw [line width=1.pt] (10.5,1.)-- (11.5,1.);
\draw [line width=3.6pt,color=qqffqq] (13.,1.)-- (13.,2.);
\draw [line width=1.pt] (12.5,1.)-- (13.5,1.);
\draw [shift={(14.14487704918033,3.2481147540983586)},line width=2.pt]  plot[domain=-0.3854522988057232:0.7212441453064751,variable=\t]({1.*1.1386688210463611*cos(\t r)+0.*1.1386688210463611*sin(\t r)},{0.*1.1386688210463611*cos(\t r)+1.*1.1386688210463611*sin(\t r)});
\draw [shift={(14.14487704918033,1.7518852459016403)},line width=2.pt]  plot[domain=-0.7212441453064748:0.38545229880572324,variable=\t]({1.*1.1386688210463607*cos(\t r)+0.*1.1386688210463607*sin(\t r)},{0.*1.1386688210463607*cos(\t r)+1.*1.1386688210463607*sin(\t r)});
\draw [line width=2.pt] (15.36,2.48)-- (15.2,2.82);
\draw [line width=2.pt] (15.36,2.52)-- (15.2,2.18);
\draw [shift={(6.,7.016666666666668)},line width=1.pt]  plot[domain=4.414609085165695:5.010168875603684,variable=\t]({1.*6.816666666666668*cos(\t r)+0.*6.816666666666668*sin(\t r)},{0.*6.816666666666668*cos(\t r)+1.*6.816666666666668*sin(\t r)});
\draw [shift={(11.5,20.76666666666665)},line width=1.pt]  plot[domain=4.54137839302828:4.883399567741099,variable=\t]({1.*20.566666666666652*cos(\t r)+0.*20.566666666666652*sin(\t r)},{0.*20.566666666666652*cos(\t r)+1.*20.566666666666652*sin(\t r)});
\begin{scriptsize}
\draw [color=black,fill=white] (2.,1.) circle (2.5pt);
\draw [color=black,fill=white] (2.,2.) circle (2.5pt);
\draw [color=black,fill=white] (2.,3.) circle (2.5pt);
\draw [color=black,fill=white] (2.,4.) circle (2.5pt);
\draw [color=black,fill=white] (5.,1.) circle (2.5pt);
\draw [color=black,fill=white] (5.,2.) circle (2.5pt);
\draw [color=black,fill=white] (5.,3.) circle (2.5pt);
\draw [color=black,fill=white] (5.,4.) circle (2.5pt);
\draw [color=black,fill=white] (7.,1.) circle (2.5pt);
\draw [color=black,fill=white] (7.,2.) circle (2.5pt);
\draw [color=black,fill=white] (7.,3.) circle (2.5pt);
\draw [color=black,fill=white] (7.,4.) circle (2.5pt);
\draw [color=black,fill=white] (11.,1.) circle (2.5pt);
\draw [color=black,fill=white] (11.,2.) circle (2.5pt);
\draw [color=black,fill=white] (11.,2.) circle (2.5pt);
\draw [color=black,fill=white] (11.,3.) circle (2.5pt);
\draw [color=black,fill=white] (13.,1.) circle (2.5pt);
\draw [color=black,fill=white] (13.,2.) circle (2.5pt);
\end{scriptsize}

\draw (2.55,2.68) node[anchor=north west] {$=$};
\draw (5.8,1.15) node[anchor=north west] {$,$};
\draw (5.8+2,1.15) node[anchor=north west] {$,$};
\draw (5.8+4,1.15) node[anchor=north west] {$,$};
\draw (5.8+6,1.15) node[anchor=north west] {$,$};
\draw (5.1,0.2) node[anchor=north west] {toggles};
\draw (5.1+5.4,0.2) node[anchor=north west] {removals};
\draw (13.25, 1.6) node[anchor=north west] {\scalebox{0.8}{\emph{carry-on}}};
\end{tikzpicture}}
\end{center}
\vspace{-0.5cm}
    \caption{A {\sc green-lime hackenbush} position and its options.}
    \label{fig:fig10}
\end{figure}
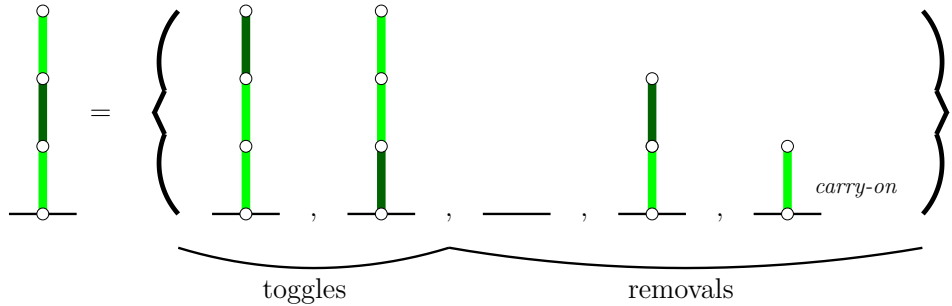

Since monochromatic {\sc green-lime hackenbush} positions allow neither carry-on moves nor toggles, the game is analogous to {\sc green hackenbush} whenever all edges are of the same color.

When a {\sc green-lime hackenbush} position has edges of different colors, to determine its Grundy value it is necessary to draw its game digraph and apply the Algorithm \ref{alg:Gcarry}. To illustrate the procedure, let us consider the challenge shown in Figure \ref{fig:fig11}, concerning a disjunctive sum with four disjoint components.

\begin{figure}[htb!]
\begin{center}
\definecolor{qqffqq}{rgb}{0.,1.,0.}
\definecolor{qqwuqq}{rgb}{0.,0.39215686274509803,0.}
\definecolor{ududff}{rgb}{0.30196078431372547,0.30196078431372547,1.}
\begin{tikzpicture}
\clip(1.4,-4.2) rectangle (11.,0.2);
\fill[line width=1.pt,fill=black,fill opacity=1.0] (2.3387520944474054,-4.) -- (3.234683283219673,-4.) -- (3.234683283219673,-2.3514233109714118) -- (2.814990844104971,-1.6032999275021507) -- (2.5221635292332794,-1.7287973481614471) -- (2.5147385779562437,-2.2474312979889164) -- (2.442744107429901,-2.5914048793925546) -- (2.298755166377215,-2.8713833758838883) -- cycle;
\fill[line width=1.pt,fill=black,fill opacity=1.0] (4.13061447199194,-4.) -- (3.2346832832196726,-4.) -- (3.2346832832196726,-2.3514233109714118) -- (3.6543757223343745,-1.6032999275021507) -- (3.9472030372060667,-1.7287973481614471) -- (3.954627988483102,-2.2474312979889164) -- (4.026622459009445,-2.5914048793925546) -- (4.170611400062131,-2.8713833758838883) -- cycle;
\fill[line width=1.pt,fill=black,fill opacity=1.0] (3.,-1.5) -- (2.986702329184491,-1.7114724618483639) -- (3.234683283219673,-1.5754829064097162) -- cycle;
\fill[line width=1.pt,fill=black,fill opacity=1.0] (3.469366566439346,-1.5) -- (3.4826642372548546,-1.7114724618483639) -- (3.234683283219673,-1.5754829064097162) -- cycle;
\fill[line width=1.pt,fill=black,fill opacity=1.0] (3.234683283219673,-1.4394933509710683) -- (3.234683283219673,-1.247508096234154) -- (3.074695570938909,-1.2075111681639636) -- (2.9547047867283376,-1.1195179264095445) -- (2.8507127737458426,-1.0315246846551254) -- (2.69872444707912,-0.839539429918211) -- (2.8007295195115076,-1.146937540511326) -- (2.9147078586581485,-1.319502566760497) -- (3.0342828582812356,-1.3942442983466161) -- cycle;
\fill[line width=1.pt,fill=black,fill opacity=1.0] (3.234683283219673,-1.4394933509710683) -- (3.234683283219673,-1.247508096234154) -- (3.394670995500437,-1.2075111681639636) -- (3.5146617797110085,-1.1195179264095445) -- (3.6186537926935034,-1.0315246846551254) -- (3.770642119360226,-0.8395394299182111) -- (3.6686370469278384,-1.146937540511326) -- (3.5546587077811975,-1.319502566760497) -- (3.4350837081581105,-1.3942442983466161) -- cycle;
\fill[line width=1.pt,fill=black,fill opacity=1.0] (3.234683283219673,-0.9035345148305158) -- (3.234683283219673,-0.7995425018480204) -- (3.0266992572546805,-0.8795363579884015) -- (2.994701714798529,-1.1755136257078111) -- (3.090694342166986,-1.2315093250060778) -- (3.1066931133950617,-0.9755289853568587) -- cycle;
\fill[line width=1.pt,fill=black,fill opacity=1.0] (3.234683283219673,-0.9035345148305158) -- (3.234683283219673,-0.7995425018480204) -- (3.4426673091846656,-0.8795363579884015) -- (3.474664851640817,-1.1755136257078111) -- (3.37867222427236,-1.2315093250060778) -- (3.3626734530442843,-0.9755289853568587) -- cycle;
\draw [shift={(-6.807574460701446,-2.9935458954057133)},line width=1.pt]  plot[domain=-0.11579515048851441:0.059764339625761984,variable=\t]({1.*8.711131683046695*cos(\t r)+0.*8.711131683046695*sin(\t r)},{0.*8.711131683046695*cos(\t r)+1.*8.711131683046695*sin(\t r)});
\draw [shift={(2.4118110011102054,-2.347330079452711)},line width=1.pt]  plot[domain=1.749887954045214:3.37749347920988,variable=\t]({1.*0.5387267254629343*cos(\t r)+0.*0.5387267254629343*sin(\t r)},{0.*0.5387267254629343*cos(\t r)+1.*0.5387267254629343*sin(\t r)});
\draw [line width=1.pt] (2.3158444833337564,-1.8172197964041)-- (2.814990844104971,-1.6032999275021507);
\draw [shift={(3.6969319065716615,-0.6789239091729918)},line width=1.pt]  plot[domain=3.301129175426223:3.8277639840192443,variable=\t]({1.*1.0110467237921361*cos(\t r)+0.*1.0110467237921361*sin(\t r)},{0.*1.0110467237921361*cos(\t r)+1.*1.0110467237921361*sin(\t r)});
\draw [shift={(4.030517600554396,-0.5968490254306129)},line width=1.pt]  plot[domain=2.736287967164821:3.3218431206910584,variable=\t]({1.*1.353725096197147*cos(\t r)+0.*1.353725096197147*sin(\t r)},{0.*1.353725096197147*cos(\t r)+1.*1.353725096197147*sin(\t r)});
\draw [line width=1.pt] (2.814990844104971,-1.6032999275021507)-- (2.9147078586581485,-1.319502566760497);
\draw [shift={(3.1643407696444847,-0.5040988269979044)},line width=1.pt]  plot[domain=1.4493777810318513:2.279231408029473,variable=\t]({1.*0.5807650540817942*cos(\t r)+0.*0.5807650540817942*sin(\t r)},{0.*0.5807650540817942*cos(\t r)+1.*0.5807650540817942*sin(\t r)});
\draw [shift={(13.276941027140758,-2.9935458954057155)},line width=1.pt]  plot[domain=3.0818283139640306:3.2573878040783075,variable=\t]({1.*8.711131683046663*cos(\t r)+0.*8.711131683046663*sin(\t r)},{0.*8.711131683046663*cos(\t r)+1.*8.711131683046663*sin(\t r)});
\draw [shift={(4.0575555653291415,-2.3473300794527105)},line width=1.pt]  plot[domain=-0.23590082562008785:1.3917046995445808,variable=\t]({1.*0.5387267254629335*cos(\t r)+0.*0.5387267254629335*sin(\t r)},{0.*0.5387267254629335*cos(\t r)+1.*0.5387267254629335*sin(\t r)});
\draw [shift={(2.772434659867685,-0.6789239091729917)},line width=1.pt]  plot[domain=5.597013976750135:6.123648785343156,variable=\t]({1.*1.011046723792136*cos(\t r)+0.*1.011046723792136*sin(\t r)},{0.*1.011046723792136*cos(\t r)+1.*1.011046723792136*sin(\t r)});
\draw [shift={(2.43884896588495,-0.596849025430613)},line width=1.pt]  plot[domain=-0.18025046710126524:0.4053046864249721,variable=\t]({1.*1.353725096197147*cos(\t r)+0.*1.353725096197147*sin(\t r)},{0.*1.353725096197147*cos(\t r)+1.*1.353725096197147*sin(\t r)});
\draw [shift={(3.305025796794861,-0.5040988269979039)},line width=1.pt]  plot[domain=0.8623612455603189:1.6922148725579413,variable=\t]({1.*0.5807650540817941*cos(\t r)+0.*0.5807650540817941*sin(\t r)},{0.*0.5807650540817941*cos(\t r)+1.*0.5807650540817941*sin(\t r)});
\draw [line width=1.pt] (4.15352208310559,-1.8172197964041001)-- (3.6543757223343745,-1.6032999275021507);
\draw [line width=1.pt] (3.6543757223343745,-1.6032999275021507)-- (3.5546587077811975,-1.319502566760497);
\draw [shift={(3.0146097902956135,-0.7510716483392288)},line width=1.pt]  plot[domain=1.0859925626300266:2.0043292739164125,variable=\t]({1.*0.3520720186509239*cos(\t r)+0.*0.3520720186509239*sin(\t r)},{0.*0.3520720186509239*cos(\t r)+1.*0.3520720186509239*sin(\t r)});
\draw [shift={(3.2461662722462745,-0.9222427553794739)},line width=1.pt]  plot[domain=4.017044043768784:4.690192576459457,variable=\t]({1.*0.51737804135549*cos(\t r)+0.*0.51737804135549*sin(\t r)},{0.*0.51737804135549*cos(\t r)+1.*0.51737804135549*sin(\t r)});
\draw [line width=1.pt] (2.298755166377215,-2.8713833758838883)-- (2.442744107429901,-2.5914048793925546);
\draw [line width=1.pt] (2.442744107429901,-2.5914048793925546)-- (2.5147385779562437,-2.2474312979889164);
\draw [line width=1.pt] (2.5147385779562437,-2.2474312979889164)-- (2.5221635292332794,-1.7287973481614471);
\draw [line width=1.pt] (2.814990844104971,-1.6032999275021507)-- (3.234683283219673,-2.3514233109714118);
\draw [line width=1.pt] (2.3387520944474054,-4.)-- (2.298755166377215,-2.8713833758838883);
\draw [line width=1.pt] (3.234683283219673,-2.3514233109714118)-- (3.234683283219673,-4.);
\draw [line width=1.pt] (2.3387520944474054,-4.)-- (3.234683283219673,-4.);
\draw [line width=1.pt] (3.234683283219673,-4.)-- (3.234683283219673,-2.3514233109714118);
\draw [line width=1.pt] (3.234683283219673,-2.3514233109714118)-- (2.814990844104971,-1.6032999275021507);
\draw [line width=1.pt] (2.814990844104971,-1.6032999275021507)-- (2.5221635292332794,-1.7287973481614471);
\draw [line width=1.pt] (2.5221635292332794,-1.7287973481614471)-- (2.5147385779562437,-2.2474312979889164);
\draw [line width=1.pt] (2.5147385779562437,-2.2474312979889164)-- (2.442744107429901,-2.5914048793925546);
\draw [line width=1.pt] (2.442744107429901,-2.5914048793925546)-- (2.298755166377215,-2.8713833758838883);
\draw [line width=1.pt] (2.298755166377215,-2.8713833758838883)-- (2.3387520944474054,-4.);
\draw [line width=1.pt] (4.13061447199194,-4.)-- (3.2346832832196726,-4.);
\draw [line width=1.pt] (3.2346832832196726,-4.)-- (3.2346832832196726,-2.3514233109714118);
\draw [line width=1.pt] (3.2346832832196726,-2.3514233109714118)-- (3.6543757223343745,-1.6032999275021507);
\draw [line width=1.pt] (3.6543757223343745,-1.6032999275021507)-- (3.9472030372060667,-1.7287973481614471);
\draw [line width=1.pt] (3.9472030372060667,-1.7287973481614471)-- (3.954627988483102,-2.2474312979889164);
\draw [line width=1.pt] (3.954627988483102,-2.2474312979889164)-- (4.026622459009445,-2.5914048793925546);
\draw [line width=1.pt] (4.026622459009445,-2.5914048793925546)-- (4.170611400062131,-2.8713833758838883);
\draw [line width=1.pt] (4.170611400062131,-2.8713833758838883)-- (4.13061447199194,-4.);
\draw [shift={(3.2232002941930697,-0.9222427553794739)},line width=1.pt]  plot[domain=4.734585384309925:5.407733917000597,variable=\t]({1.*0.5173780413554913*cos(\t r)+0.*0.5173780413554913*sin(\t r)},{0.*0.5173780413554913*cos(\t r)+1.*0.5173780413554913*sin(\t r)});
\draw [line width=1.pt] (3.,-1.5)-- (2.986702329184491,-1.7114724618483639);
\draw [line width=1.pt] (3.,-1.5)-- (2.986702329184491,-1.7114724618483639);
\draw [line width=1.pt] (2.986702329184491,-1.7114724618483639)-- (3.234683283219673,-1.5754829064097162);
\draw [line width=1.pt] (3.234683283219673,-1.5754829064097162)-- (3.,-1.5);
\draw [line width=1.pt] (3.469366566439346,-1.5)-- (3.4826642372548546,-1.7114724618483639);
\draw [line width=1.pt] (3.4826642372548546,-1.7114724618483639)-- (3.234683283219673,-1.5754829064097162);
\draw [line width=1.pt] (3.234683283219673,-1.5754829064097162)-- (3.469366566439346,-1.5);
\draw [line width=1.pt] (3.,-1.5)-- (2.878667609188586,-1.4220741060887048);
\draw [line width=1.pt] (3.469366566439346,-1.5)-- (3.59069895725076,-1.4220741060887048);
\draw [shift={(3.2226842047986137,-0.9315323644796477)},line width=1.pt]  plot[domain=3.708321871113308:4.789160871654481,variable=\t]({1.*0.15644884180554344*cos(\t r)+0.*0.15644884180554344*sin(\t r)},{0.*0.15644884180554344*cos(\t r)+1.*0.15644884180554344*sin(\t r)});
\draw [shift={(3.2466823616407314,-0.9315323644796487)},line width=1.pt]  plot[domain=4.635617089114904:5.71645608965608,variable=\t]({1.*0.15644884180554236*cos(\t r)+0.*0.15644884180554236*sin(\t r)},{0.*0.15644884180554236*cos(\t r)+1.*0.15644884180554236*sin(\t r)});
\draw [shift={(3.454756776143732,-0.7510716483392295)},line width=1.pt]  plot[domain=1.1372633796733802:2.0556000909597647,variable=\t]({1.*0.35207201865092463*cos(\t r)+0.*0.35207201865092463*sin(\t r)},{0.*0.35207201865092463*cos(\t r)+1.*0.35207201865092463*sin(\t r)});
\draw [line width=1.pt] (2.69872444707912,-0.839539429918211)-- (2.8507127737458426,-1.0315246846551254);
\draw [line width=1.pt] (2.9547047867283376,-1.1195179264095445)-- (2.8507127737458426,-1.0315246846551254);
\draw [line width=1.pt] (3.234683283219673,-1.247508096234154)-- (3.074695570938909,-1.2075111681639636);
\draw [line width=1.pt] (3.074695570938909,-1.2075111681639636)-- (2.9547047867283376,-1.1195179264095445);
\draw [line width=1.pt] (3.234683283219673,-1.4394933509710683)-- (3.234683283219673,-1.247508096234154);
\draw [line width=1.pt] (3.234683283219673,-1.247508096234154)-- (3.074695570938909,-1.2075111681639636);
\draw [line width=1.pt] (3.074695570938909,-1.2075111681639636)-- (2.9547047867283376,-1.1195179264095445);
\draw [line width=1.pt] (2.9547047867283376,-1.1195179264095445)-- (2.8507127737458426,-1.0315246846551254);
\draw [line width=1.pt] (2.8507127737458426,-1.0315246846551254)-- (2.69872444707912,-0.839539429918211);
\draw [line width=1.pt] (2.69872444707912,-0.839539429918211)-- (2.8007295195115076,-1.146937540511326);
\draw [line width=1.pt] (2.8007295195115076,-1.146937540511326)-- (2.9147078586581485,-1.319502566760497);
\draw [line width=1.pt] (2.9147078586581485,-1.319502566760497)-- (3.0342828582812356,-1.3942442983466161);
\draw [line width=1.pt] (3.0342828582812356,-1.3942442983466161)-- (3.234683283219673,-1.4394933509710683);
\draw [line width=1.pt] (3.234683283219673,-1.4394933509710683)-- (3.234683283219673,-1.247508096234154);
\draw [line width=1.pt] (3.234683283219673,-1.247508096234154)-- (3.394670995500437,-1.2075111681639636);
\draw [line width=1.pt] (3.394670995500437,-1.2075111681639636)-- (3.5146617797110085,-1.1195179264095445);
\draw [line width=1.pt] (3.5146617797110085,-1.1195179264095445)-- (3.6186537926935034,-1.0315246846551254);
\draw [line width=1.pt] (3.6186537926935034,-1.0315246846551254)-- (3.770642119360226,-0.8395394299182111);
\draw [line width=1.pt] (3.770642119360226,-0.8395394299182111)-- (3.6686370469278384,-1.146937540511326);
\draw [line width=1.pt] (3.6686370469278384,-1.146937540511326)-- (3.5546587077811975,-1.319502566760497);
\draw [line width=1.pt] (3.5546587077811975,-1.319502566760497)-- (3.4350837081581105,-1.3942442983466161);
\draw [line width=1.pt] (3.4350837081581105,-1.3942442983466161)-- (3.234683283219673,-1.4394933509710683);
\draw [line width=1.pt] (3.0266992572546805,-0.8795363579884015)-- (2.994701714798529,-1.1755136257078111);
\draw [line width=1.pt] (2.994701714798529,-1.1755136257078111)-- (3.090694342166986,-1.2315093250060778);
\draw [line width=1.pt] (3.090694342166986,-1.2315093250060778)-- (3.1066931133950617,-0.9755289853568587);
\draw [line width=1.pt] (3.234683283219673,-0.9035345148305158)-- (3.1066931133950617,-0.9755289853568587);
\draw [line width=1.pt] (3.0266992572546805,-0.8795363579884015)-- (3.234683283219673,-0.7995425018480204);
\draw [line width=1.pt] (3.234683283219673,-0.9035345148305158)-- (3.234683283219673,-0.7995425018480204);
\draw [line width=1.pt] (3.234683283219673,-0.7995425018480204)-- (3.0266992572546805,-0.8795363579884015);
\draw [line width=1.pt] (3.0266992572546805,-0.8795363579884015)-- (2.994701714798529,-1.1755136257078111);
\draw [line width=1.pt] (2.994701714798529,-1.1755136257078111)-- (3.090694342166986,-1.2315093250060778);
\draw [line width=1.pt] (3.090694342166986,-1.2315093250060778)-- (3.1066931133950617,-0.9755289853568587);
\draw [line width=1.pt] (3.1066931133950617,-0.9755289853568587)-- (3.234683283219673,-0.9035345148305158);
\draw [line width=1.pt] (3.4426673091846656,-0.8795363579884015)-- (3.234683283219673,-0.7995425018480204);
\draw [line width=1.pt] (3.234683283219673,-0.9035345148305158)-- (3.234683283219673,-0.7995425018480204);
\draw [line width=1.pt] (3.234683283219673,-0.7995425018480204)-- (3.4426673091846656,-0.8795363579884015);
\draw [line width=1.pt] (3.4426673091846656,-0.8795363579884015)-- (3.474664851640817,-1.1755136257078111);
\draw [line width=1.pt] (3.474664851640817,-1.1755136257078111)-- (3.37867222427236,-1.2315093250060778);
\draw [line width=1.pt] (3.37867222427236,-1.2315093250060778)-- (3.3626734530442843,-0.9755289853568587);
\draw [line width=1.pt] (3.3626734530442843,-0.9755289853568587)-- (3.234683283219673,-0.9035345148305158);
\draw [line width=2.6pt,color=qqwuqq] (5.657035258268461,-4.)-- (5.657035258268461,-3.3998754506956943);
\draw [line width=2.6pt,color=qqwuqq] (5.657035258268461,-3.3998754506956943)-- (6.039287361940807,-2.8330878486987676);
\draw [line width=2.6pt,color=qqwuqq] (6.039287361940807,-2.8330878486987676)-- (5.274783154596116,-2.8330878486987676);
\draw [line width=2.6pt,color=qqffqq] (5.657035258268461,-3.3998754506956943)-- (5.274783154596116,-2.8330878486987676);
\draw [line width=2.6pt,color=qqffqq] (7.120138137841922,-4.)-- (7.120138137841922,-3.3998754506956943);
\draw [line width=2.6pt,color=qqwuqq] (7.120138137841922,-3.3998754506956943)-- (7.502390241514267,-2.8330878486987676);
\draw [line width=2.6pt,color=qqwuqq] (7.120138137841922,-3.3998754506956943)-- (6.737886034169576,-2.8330878486987676);
\draw [line width=2.6pt,color=qqffqq] (8.583241017415382,-4.)-- (8.583241017415382,-3.3998754506956943);
\draw [line width=2.6pt,color=qqffqq] (8.583241017415382,-3.3998754506956943)-- (8.965493121087729,-2.8330878486987676);
\draw [line width=2.6pt,color=qqwuqq] (8.583241017415382,-3.3998754506956943)-- (8.200988913743036,-2.8330878486987676);
\draw [line width=2.6pt,color=qqwuqq] (7.502390241514267,-2.8330878486987676)-- (6.737886034169576,-2.8330878486987676);
\draw [line width=2.6pt,color=qqwuqq] (10.046343896988843,-4.)-- (10.046343896988843,-3.3998754506956943);
\draw [line width=2.6pt,color=qqffqq] (10.046343896988843,-3.3998754506956943)-- (10.428596000661189,-2.8330878486987676);
\draw [line width=2.6pt,color=qqffqq] (10.046343896988843,-3.3998754506956943)-- (9.664091793316496,-2.8330878486987676);
\draw [line width=1.pt] (1.4654432248958453,-4.)-- (10.95584028158856,-4.);
\begin{scriptsize}
\draw [fill=ududff] (-1.2165226491869265,-8.088) circle (2.5pt);
\draw [fill=ududff] (7.783477350813073,-8.088) circle (2.5pt);
\draw [fill=black] (3.0146986428687192,-0.5555597046549537) circle (2.5pt);
\draw [fill=black] (3.454667923570627,-0.5555597046549537) circle (2.5pt);

\draw [color=black,fill=white] (5.657035258268461,-4.) circle (2.5pt);
\draw [color=black,fill=white] (5.657035258268461,-3.3998754506956943) circle (2.5pt);
\draw [color=black,fill=white] (5.274783154596116,-2.8330878486987676) circle (2.5pt);
\draw [color=black,fill=white] (6.039287361940807,-2.8330878486987676) circle (2.5pt);
\draw [color=black,fill=white] (7.120138137841922,-4.) circle (2.5pt);
\draw [color=black,fill=white] (7.120138137841922,-3.3998754506956943) circle (2.5pt);
\draw [color=black,fill=white] (8.583241017415382,-4.) circle (2.5pt);
\draw [color=black,fill=white] (8.965493121087729,-2.8330878486987676) circle (2.5pt);
\draw [color=black,fill=white] (8.583241017415382,-3.3998754506956943) circle (2.5pt);
\draw [color=black,fill=white] (8.200988913743036,-2.8330878486987676) circle (2.5pt);
\draw [color=black,fill=white] (7.502390241514267,-2.8330878486987676) circle (2.5pt);
\draw [color=black,fill=white] (6.737886034169576,-2.8330878486987676) circle (2.5pt);
\draw [color=black,fill=white] (10.046343896988843,-4.) circle (2.5pt);
\draw [color=black,fill=white] (10.428596000661189,-2.8330878486987676) circle (2.5pt);
\draw [color=black,fill=white] (10.046343896988843,-3.3998754506956943) circle (2.5pt);
\draw [color=black,fill=white] (9.664091793316496,-2.8330878486987676) circle (2.5pt);
\end{scriptsize}
\end{tikzpicture}
\end{center}
\vspace{-0.5cm}
    \caption{The bartender serves us an interesting problem. Can you find a winning move before moving on to the next drink?}
    \label{fig:fig11}
\end{figure}
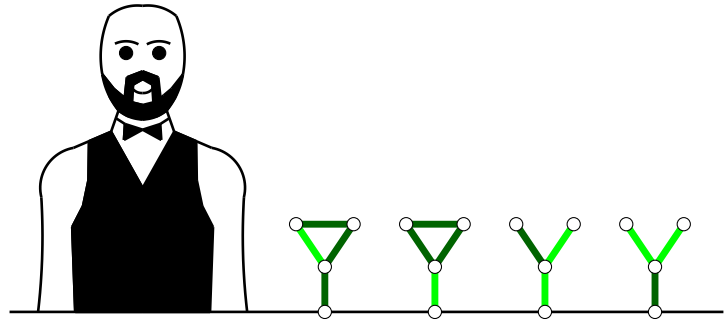

The game digraph for stalks with two edges of different colors is shown in Figure \ref{fig:fig12}. To avoid overloading the drawing, the nodes with a single edge were labeled directly with the Grundy value $1$. Note that although the players may choose to toggle, there are no cyclic zones. It is also interesting to observe that a green edge with a lime edge on top of it is a $\mathcal{P}$-position.

\begin{figure}[htb!]
\begin{center}\scalebox{0.75}{
\definecolor{aqaqaq}{rgb}{0.6274509803921569,0.6274509803921569,0.6274509803921569}
\definecolor{qqffqq}{rgb}{0.,1.,0.}
\definecolor{qqwuqq}{rgb}{0.,0.39215686274509803,0.}
\begin{tikzpicture}
\clip(4.5,-2.7) rectangle (17.5,5.2);
\draw [line width=0.4pt,color=gray,fill=gray!30] (9.,-2.) circle (0.5989134213566999cm);
\draw [line width=1.pt] (5.68,0.42)-- (6.32,0.42);
\draw [line width=3.6pt,color=qqwuqq] (6.,0.42)-- (6.,1.);
\draw [line width=3.6pt,color=qqffqq] (6.,1.)-- (6.,1.58);
\draw [line width=1.pt] (5.993836951477332,0.7223761150930783) circle (1.2cm);
\draw [line width=1.pt] (9.742506058887226,0.7235631413396626) circle (1.2cm);
\draw [line width=1.pt] (9.428669107409894,0.4211870262465843)-- (10.068669107409894,0.4211870262465842);
\draw [line width=3.6pt,color=qqffqq] (9.748669107409894,0.42118702624658433)-- (9.748669107409894,1.0011870262465843);
\draw [line width=3.6pt,color=qqwuqq] (9.748669107409894,1.0011870262465843)-- (9.748669107409894,1.5811870262465844);
\draw [line width=1.pt] (16.1,-1.) circle (1.2cm);
\draw [line width=1.pt] (14.6,3.) circle (1.2cm);
\draw [line width=1.pt] (15.786163048522669,-1.3023761150930784)-- (16.426163048522668,-1.3023761150930784);
\draw [line width=1.pt] (14.286163048522667,2.6976238849069216)-- (14.926163048522668,2.6976238849069216);
\draw [line width=3.6pt,color=qqffqq] (14.606163048522667,2.6976238849069216)-- (14.606163048522667,3.2776238849069217);
\draw [->,line width=1.pt] (6.883309676848088,-0.08312886367581651) -- (7.719689357942688,-0.8405528723569398);
\draw [line width=1.pt] (7.3014995173953885,-0.46184086801637814)-- (8.556069039037288,-1.597976881038063);
\draw [->,line width=1.pt] (9.593059944091019,-1.9164704304097155) -- (12.252394068982015,-1.5419163283123924);
\draw [line width=1.pt] (10.922727006536517,-1.729193379361054)-- (14.911728193873012,-1.1673622262150696);
\draw [line width=1.pt] (9.,-2.) circle (0.5989134213566998cm);
\draw [line width=1.pt] (9.748669107409894,3.846131078224102) circle (1.2cm);
\draw [line width=1.pt] (9.434832155932561,3.5437549631310237)-- (10.074832155932562,3.5437549631310237);
\draw [line width=3.6pt,color=qqwuqq] (9.75483215593256,3.5437549631310237)-- (9.754832155932561,4.123754963131024);
\draw [->,line width=1.pt] (15.021348129906077,1.8764049869171384) -- (15.35,1.);
\draw [line width=1.pt] (15.18567406495304,1.438202493458568)-- (15.678651870093935,0.1235950130828565);
\draw [->,line width=1.pt] (10.829100915915554,1.2327896299276024) -- (12.171253029443612,1.861781570669832);
\draw [line width=1.pt] (11.500176972679583,1.5472856002987172)-- (13.513405142971669,2.4907735114120615);
\draw [->,line width=1.pt] (10.900697597511105,0.4095689377943925) -- (12.921253029443614,-0.13821842933016715);
\draw [line width=1.pt] (11.910975313477358,0.13567525423211269)-- (14.941808461376123,-0.6860057964547268);
\draw [line width=1.pt] (7.193836891315965,0.7227560983307751)-- (8.542506119048587,0.7231831581019657);
\draw [->,line width=1.pt] (7.868171505182277,0.7229696282163705) -- (8.373922465582009,0.7231297756305668);
\draw [->,line width=1.pt] (7.868171505182277,0.7229696282163705) -- (7.362420544782543,0.7228094808021739);
\draw [->,line width=1.pt] (6.916339156239685,1.489832747228479) -- (7.871253029443609,2.2842535966585857);
\draw [line width=1.pt] (7.393796092841647,1.8870431719435323)-- (8.826166902647532,3.0786744460886926);
\draw [->,line width=1.pt] (10.70267849254499,3.1182120627600423) -- (12.924334553704961,1.4230655391120424);
\draw [line width=1.pt] (11.813506523124975,2.2706388009360423)-- (15.145990614864932,-0.2720809845359575);
\begin{scriptsize}
\draw [color=black,fill=white] (6.,0.42) circle (2.0pt);
\draw [color=black,fill=white] (6.,1.) circle (2.0pt);
\draw [color=black,fill=white] (6.,1.58) circle (2.0pt);
\draw [color=black,fill=white] (9.748669107409894,0.42118702624658433) circle (2.0pt);
\draw [color=black,fill=white] (9.748669107409894,1.0011870262465843) circle (2.0pt);
\draw [color=black,fill=white] (9.748669107409894,1.5811870262465844) circle (2.0pt);
\draw [color=black,fill=white] (14.606163048522667,2.6976238849069216) circle (2.0pt);
\draw [color=black,fill=white] (14.606163048522667,3.2776238849069217) circle (2.0pt);
\draw [color=black,fill=white] (9.75483215593256,3.5437549631310237) circle (2.0pt);
\draw [color=black,fill=white] (9.754832155932561,4.123754963131024) circle (2.0pt);
\end{scriptsize}

\draw (5.75,0.298914255574211) node[anchor=north west] {$0$};
\draw (5.8+3.7,0.298914255574211) node[anchor=north west] {$2$};
\draw (5.8+10.06,-1.4) node[anchor=north west] {$0$};
\draw (5.8+3.7,3.4) node[anchor=north west] {$1$};
\draw (14.35,2.55) node[anchor=north west] {$1$};
\draw (5.5+3.2,-1.7) node[anchor=north west] {$\mathbb{Z}^{+}$};

\end{tikzpicture}
}
\end{center}
\vspace{-0.5cm}
    \caption{Stalks with two edges of different colors.}
    \label{fig:fig12}
\end{figure}
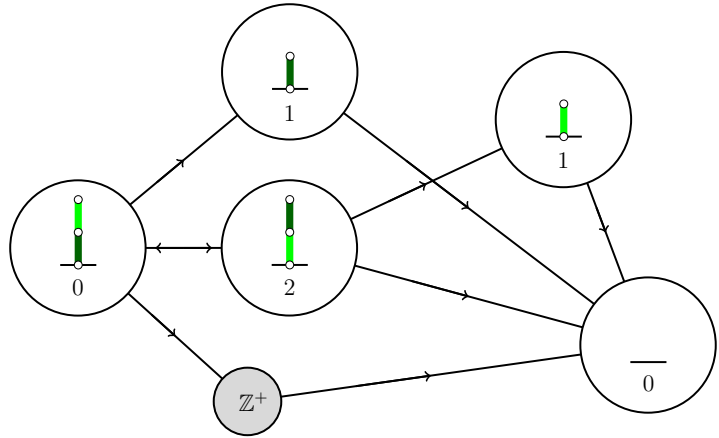

Having already determined the Grundy values of the stalks with two edges, we can now determine the values of the stalks with three edges (Figure \ref{fig:fig13}). Once again, the nodes corresponding to positions whose Grundy values are already known are simply labeled with those values.

\begin{center}\scalebox{0.75}{
\definecolor{aqaqaq}{rgb}{0.6274509803921569,0.6274509803921569,0.6274509803921569}
\definecolor{qqffqq}{rgb}{0.,1.,0.}
\definecolor{qqwuqq}{rgb}{0.,0.39215686274509803,0.}
\begin{tikzpicture}
\clip(1.6,-4.3) rectangle (14.8,7.3);
\draw [line width=0.4pt,color=gray,fill=gray!30] (6.,3.6) circle (0.5989134213566999cm);
\draw [line width=0.4pt,color=gray,fill=gray!30] (3.,1.3) circle (0.5989134213566999cm);
\draw [line width=1.pt] (5.68,0.42)-- (6.32,0.42);
\draw [line width=3.6pt,color=qqwuqq] (6.,0.42)-- (6.,1.);
\draw [line width=3.6pt,color=qqffqq] (6.,1.)-- (6.,1.58);
\draw [line width=1.pt] (5.993836951477332,1.3223761150930786) circle (1.2cm);
\draw [line width=1.pt] (9.742506058887226,1.323563141339663) circle (1.2cm);
\draw [line width=1.pt] (9.428669107409894,0.4211870262465843)-- (10.068669107409894,0.4211870262465842);
\draw [line width=3.6pt,color=qqffqq] (9.748669107409894,0.42118702624658433)-- (9.748669107409894,1.0011870262465843);
\draw [line width=3.6pt,color=qqwuqq] (9.748669107409894,1.0011870262465843)-- (9.748669107409894,1.5811870262465844);
\draw [line width=3.6pt,color=qqffqq] (6.,1.58)-- (6.,2.16);
\draw [line width=3.6pt,color=qqffqq] (9.748669107409894,1.5811870262465844)-- (9.748669107409894,2.1611870262465844);
\draw [line width=1.pt] (13.49117516629712,1.3247501675862472) circle (1.2cm);
\draw [line width=3.6pt,color=qqffqq] (13.497338214819786,0.4223740524931687)-- (13.497338214819788,1.0023740524931686);
\draw [line width=3.6pt,color=qqffqq] (13.497338214819788,1.0023740524931686)-- (13.497338214819786,1.5823740524931684);
\draw [line width=3.6pt,color=qqwuqq] (13.497338214819786,1.5823740524931684)-- (13.497338214819788,2.1623740524931687);
\draw [line width=1.pt] (13.177338214819788,0.42237405249316856)-- (13.817338214819788,0.4223740524931685);
\draw [line width=1.pt] (3.,-3.) circle (1.2cm);
\draw [line width=1.pt] (9.74,-3.) circle (1.2cm);
\draw [line width=1.pt] (3.,6.) circle (1.2cm);
\draw [line width=1.pt] (9.74,6.) circle (1.2cm);
\draw [line width=1.pt] (2.6861630485226673,-3.9023761150930785)-- (3.326163048522668,-3.9023761150930785);
\draw [line width=1.pt] (9.426163048522668,-3.9023761150930785)-- (10.066163048522668,-3.9023761150930785);
\draw [line width=3.6pt,color=qqwuqq] (9.746163048522668,-3.9023761150930785)-- (9.746163048522668,-3.3223761150930784);
\draw [line width=1.pt] (13.49,6.) circle (1.2cm);
\draw [line width=1.pt] (2.6861630485226673,5.0976238849069215)-- (3.326163048522668,5.0976238849069215);
\draw [line width=3.6pt,color=qqffqq] (3.006163048522667,5.0976238849069215)-- (3.0061630485226676,5.677623884906922);
\draw [line width=3.6pt,color=qqwuqq] (9.746163048522668,5.0976238849069215)-- (9.746163048522668,5.677623884906922);
\draw [line width=1.pt] (9.426163048522668,5.0976238849069215)-- (10.066163048522668,5.0976238849069215);
\draw [line width=3.6pt,color=qqffqq] (9.746163048522668,5.677623884906922)-- (9.746163048522668,6.257623884906921);
\draw [line width=1.pt] (6.,3.6) circle (0.5989134213566998cm);
\draw [line width=1.pt] (3.,1.3) circle (0.5989134213566998cm);
\draw [->,line width=1.pt] (3.,0.701086578643299) -- (3.,-0.5494567106783511);
\draw [line width=1.pt] (3.,0.07581493398247402)-- (3.,-1.8);
\draw [->,line width=1.pt] (4.7938704670111685,1.3134074943567349) -- (4.196383580474334,1.3089418418992234);
\draw [line width=1.pt] (4.495127023742752,1.311174668127979)-- (3.598896693937501,1.3044761894417118);
\draw [->,line width=1.pt] (8.717271918857008,1.9471775719908893) -- (7.61448032880233,2.617967514216807);
\draw [line width=1.pt] (8.165876123829669,2.282572543103848)-- (6.51168873874765,3.2887574564427244);
\draw [->,line width=1.pt] (5.532327189705759,3.9741382482353913) -- (4.734684880518697,4.612252095585042);
\draw [line width=1.pt] (5.133506035112228,4.293195171910217)-- (3.9370425713316344,5.2503659429346925);
\draw [->,line width=1.pt] (6.779766027356854,0.41555998836883395) -- (7.866918475738668,-0.838811942453461);
\draw [line width=1.pt] (7.323342251547761,-0.21162597704231353)-- (8.954070924120481,-2.093183873275756);
\draw [->,line width=1.pt] (6.7439670996572065,2.259021598075641) -- (7.866918475738667,3.66118805754654);
\draw [line width=1.pt] (7.305442787697936,2.9601048278110906)-- (8.989869851820128,5.063354517017439);
\draw [line width=1.pt] (8.542506119048584,1.323183158101966)-- (7.193836891315973,1.3227560983307756);
\draw [->,line width=1.pt] (7.8681715051822785,1.3229696282163708) -- (7.36242054478255,1.3228094808021744);
\draw [->,line width=1.pt] (7.8681715051822785,1.3229696282163708) -- (8.373922465582009,1.323129775630567);
\draw [line width=1.pt] (12.291175226458455,1.3243701843485511)-- (10.942505998725869,1.3239431245773605);
\draw [->,line width=1.pt] (11.616840612592162,1.3241566544629557) -- (11.111089652192442,1.3239965070487592);
\draw [->,line width=1.pt] (11.616840612592162,1.3241566544629557) -- (12.122591572991881,1.3243168018771523);
\draw [->,line width=1.pt] (8.73234938514145,0.6758104921850077) -- (6.371253029443611,-0.8382184293301693);
\draw [line width=1.pt] (7.551801207292531,-0.08120396857258083)-- (4.010156673745773,-2.3522473508453463);
\draw [->,line width=1.pt] (10.492914512770762,2.2599856700841006) -- (11.616253029443612,3.6617815706698296);
\draw [line width=1.pt] (11.054583771107186,2.960883620376965)-- (12.739591546116461,5.063577471255559);
\draw [line width=1.pt] (13.176163048522668,5.0976238849069215)-- (13.816163048522668,5.0976238849069215);
\draw [line width=3.6pt,color=qqffqq] (13.496163048522668,5.0976238849069215)-- (13.496163048522668,5.677623884906922);
\draw [line width=3.6pt,color=qqwuqq] (13.496163048522668,5.677623884906922)-- (13.496163048522668,6.257623884906921);
\draw [line width=1.pt] (13.49,-3.) circle (1.2cm);
\draw [line width=1.pt] (13.176163048522664,-3.9023761150930794)-- (13.816163048522665,-3.9023761150930794);
\draw [line width=3.6pt,color=qqffqq] (13.496163048522664,-3.3223761150930793)-- (13.496163048522662,-2.7423761150930797);
\draw [line width=3.6pt,color=qqffqq] (13.496163048522662,-3.9023761150930794)-- (13.496163048522664,-3.3223761150930793);
\draw [->,line width=1.pt] (12.381742274262356,0.8674114986860584) -- (8.245587583148556,-0.8376249162068774);
\draw [line width=1.pt] (10.313664928705457,0.01489329123959049)-- (4.109432892034756,-2.542661331099813);
\draw [->,line width=1.pt] (12.395086744704177,1.813207097161257) -- (8.245587583148566,3.6623750837931217);
\draw [line width=1.pt] (10.320337163926371,2.7377910904771894)-- (4.096088421592957,5.511543070424986);
\draw [->,line width=1.pt] (13.490849089772505,0.12475021188869194) -- (13.49058758314856,-0.837624916206885);
\draw [line width=1.pt] (13.490718336460532,-0.3564373521590965)-- (13.490326076524616,-1.8000000443024617);
\draw (2.2185606072080737,6.342387994640347) node[anchor=north west] {$1$};
\draw (8.9,6.342387994640347) node[anchor=north west] {$0$};
\draw (12.7,6.342387994640347) node[anchor=north west] {$2$};
\draw (8.942649181835666,1.6668008229807003) node[anchor=north west] {$1$};
\draw (8.942649181835666-3.8,1.6668008229807003) node[anchor=north west] {$\moon\!\!_{0}$};
\draw (8.942649181835666+3.8,1.6668008229807003) node[anchor=north west] {$3$};

\draw (2.76-0.5,-2.6) node[anchor=north west] {$0$};
\draw (13.25-3.7-0.5,-2.6) node[anchor=north west] {$1$};
\draw (13.25-0.5,-2.6) node[anchor=north west] {$2$};

\draw (2.65,1.6) node[anchor=north west] {\scalebox{1}{$\mathbb{Z}^{+}$}};
\draw (5.4,3.9) node[anchor=north west] {\scalebox{0.6}{$\mathbb{Z}_0^{+}\setminus\{1\}$}};
\begin{scriptsize}
\draw [color=black,fill=white] (6.,0.42) circle (2.0pt);
\draw [color=black,fill=white] (6.,1.) circle (2.0pt);
\draw [color=black,fill=white] (6.,1.58) circle (2.0pt);
\draw [color=black,fill=white] (9.748669107409894,0.42118702624658433) circle (2.0pt);
\draw [color=black,fill=white] (9.748669107409894,1.0011870262465843) circle (2.0pt);
\draw [color=black,fill=white] (6.,2.16) circle (2.0pt);
\draw [color=black,fill=white] (9.748669107409894,1.5811870262465844) circle (2.0pt);
\draw [color=black,fill=white] (9.748669107409894,2.1611870262465844) circle (2.0pt);
\draw [color=black,fill=white] (13.497338214819786,0.4223740524931687) circle (2.0pt);
\draw [color=black,fill=white] (13.497338214819788,1.0023740524931686) circle (2.0pt);
\draw [color=black,fill=white] (13.497338214819786,1.5823740524931684) circle (2.0pt);
\draw [color=black,fill=white] (13.497338214819788,2.1623740524931687) circle (2.0pt);
\draw [color=black,fill=white] (9.746163048522668,-3.9023761150930785) circle (2.0pt);
\draw [color=black,fill=white] (9.746163048522668,-3.3223761150930784) circle (2.0pt);
\draw [color=black,fill=white] (3.006163048522667,5.0976238849069215) circle (2.0pt);
\draw [color=black,fill=white] (3.0061630485226676,5.677623884906922) circle (2.0pt);
\draw [color=black,fill=white] (9.746163048522668,5.0976238849069215) circle (2.0pt);
\draw [color=black,fill=white] (9.746163048522668,5.677623884906922) circle (2.0pt);
\draw [color=black,fill=white] (9.746163048522668,6.257623884906921) circle (2.0pt);
\draw [color=black,fill=white] (13.496163048522668,5.0976238849069215) circle (2.0pt);
\draw [color=black,fill=white] (13.496163048522668,5.677623884906922) circle (2.0pt);
\draw [color=black,fill=white] (13.496163048522668,6.257623884906921) circle (2.0pt);
\draw [color=black,fill=white] (13.496163048522662,-2.7423761150930797) circle (2.0pt);
\draw [color=black,fill=white] (13.496163048522662,-3.9023761150930794) circle (2.0pt);
\draw [color=black,fill=white] (13.496163048522664,-3.3223761150930793) circle (2.0pt);
\end{scriptsize}
\end{tikzpicture}
}
\end{center}
\clearpage
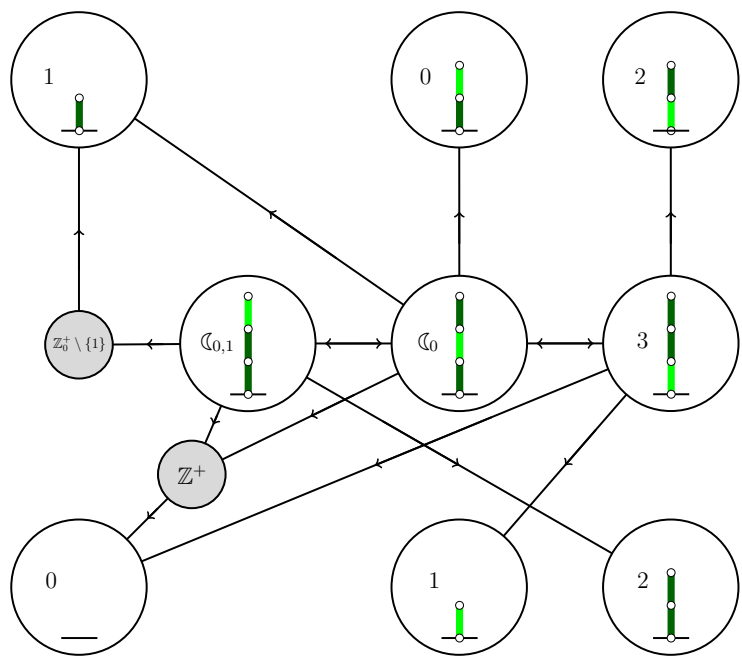
\begin{figure}[htb!]
\begin{center}\scalebox{0.75}{
\definecolor{aqaqaq}{rgb}{0.6274509803921569,0.6274509803921569,0.6274509803921569}
\definecolor{qqffqq}{rgb}{0.,1.,0.}
\definecolor{qqwuqq}{rgb}{0.,0.39215686274509803,0.}
\begin{tikzpicture}
\clip(1.6,-4.3) rectangle (14.8,7.3);
\draw [line width=0.4pt,color=gray,fill=gray!30] (3.,1.3) circle (0.5989134213566999cm);

\draw [line width=1.pt] (5.68,0.42)-- (6.32,0.42);
\draw [line width=3.6pt,color=qqwuqq] (6.,0.42)-- (6.,1.);
\draw [line width=3.6pt,color=qqwuqq] (6.,1.)-- (6.,1.58);
\draw [line width=3.6pt,color=qqffqq] (6.,1.58)-- (6.,2.16);

\draw [line width=1.pt] (5.993836951477332,1.3223761150930786) circle (1.2cm);
\draw [line width=1.pt] (9.742506058887226,1.323563141339663) circle (1.2cm);

\draw [line width=1.pt] (9.428669107409894,0.4211870262465843)-- (10.068669107409894,0.4211870262465842);
\draw [line width=3.6pt,color=qqwuqq] (9.748669107409894,0.42118702624658433)-- (9.748669107409894,1.0011870262465843);
\draw [line width=3.6pt,color=qqffqq] (9.748669107409894,1.0011870262465843)-- (9.748669107409894,1.5811870262465844);
\draw [line width=3.6pt,color=qqwuqq] (9.748669107409894,1.5811870262465844)-- (9.748669107409894,2.1611870262465844);

\draw [line width=1.pt] (13.49117516629712,1.3247501675862472) circle (1.2cm);

\draw [line width=3.6pt,color=qqffqq] (13.497338214819786,0.4223740524931687)-- (13.497338214819788,1.0023740524931686);
\draw [line width=3.6pt,color=qqwuqq] (13.497338214819788,1.0023740524931686)-- (13.497338214819786,1.5823740524931684);
\draw [line width=3.6pt,color=qqwuqq] (13.497338214819786,1.5823740524931684)-- (13.497338214819788,2.1623740524931687);
\draw [line width=1.pt] (13.177338214819788,0.42237405249316856)-- (13.817338214819788,0.4223740524931685);

\draw [line width=1.pt] (3.,-3.) circle (1.2cm);
\draw [line width=1.pt] (9.74,-3.) circle (1.2cm);
\draw [line width=1.pt] (3.,6.) circle (1.2cm);
\draw [line width=1.pt] (9.74,6.) circle (1.2cm);

\draw [line width=1.pt] (2.6861630485226673,-3.9023761150930785)-- (3.326163048522668,-3.9023761150930785);
\draw [line width=1.pt] (9.426163048522668,-3.9023761150930785)-- (10.066163048522668,-3.9023761150930785);
\draw [line width=3.6pt,color=qqffqq] (9.746163048522668,-3.9023761150930785)-- (9.746163048522668,-3.3223761150930784);

\draw [line width=1.pt] (13.49,6.) circle (1.2cm);

\draw [line width=1.pt] (2.6861630485226673,5.0976238849069215)-- (3.326163048522668,5.0976238849069215);
\draw [line width=3.6pt,color=qqwuqq] (3.006163048522667,5.0976238849069215)-- (3.0061630485226676,5.677623884906922);
\draw [line width=3.6pt,color=qqwuqq] (9.746163048522668,5.0976238849069215)-- (9.746163048522668,5.677623884906922);

\draw [line width=1.pt] (9.426163048522668,5.0976238849069215)-- (10.066163048522668,5.0976238849069215);
\draw [line width=3.6pt,color=qqffqq] (9.746163048522668,5.677623884906922)-- (9.746163048522668,6.257623884906921);
\draw [line width=1.pt] (3.,1.3) circle (0.5989134213566998cm);

\draw [line width=3.6pt,color=qqffqq] (13.496163048522668,5.0976238849069215)-- (13.496163048522668,5.677623884906922);
\draw [line width=3.6pt,color=qqwuqq] (13.496163048522668,5.677623884906922)-- (13.496163048522668,6.257623884906921);

\draw [line width=1.pt] (13.49,-3.) circle (1.2cm);

\draw [line width=1.pt] (13.176163048522664,-3.9023761150930794)-- (13.816163048522665,-3.9023761150930794);
\draw [line width=3.6pt,color=qqwuqq] (13.496163048522664,-3.3223761150930793)-- (13.496163048522662,-2.7423761150930797);
\draw [line width=3.6pt,color=qqwuqq] (13.496163048522662,-3.9023761150930794)-- (13.496163048522664,-3.3223761150930793);

\draw (2.2185606072080737,6.342387994640347) node[anchor=north west] {$1$};
\draw (8.9,6.342387994640347) node[anchor=north west] {$0$};
\draw (12.7,6.342387994640347) node[anchor=north west] {$2$};
\draw (8.85,1.6668008229807003) node[anchor=north west] {$\moon\!\!_{0}$};
\draw (8.85-3.85,1.6668008229807003) node[anchor=north west] {$\moon\!\!_{0,1}$};
\draw (8.942649181835666+3.8,1.6668008229807003) node[anchor=north west] {$3$};

\draw (2.76-0.5,-2.6) node[anchor=north west] {$0$};
\draw (13.25-3.7-0.5,-2.6) node[anchor=north west] {$1$};
\draw (13.25-0.5,-2.6) node[anchor=north west] {$2$};

\draw (2.4,1.6) node[anchor=north west] {\scalebox{0.6}{$\mathbb{Z}_0^{+}\setminus\{1\}$}};
\begin{scriptsize}
\draw [color=black,fill=white] (6.,0.42) circle (2.0pt);
\draw [color=black,fill=white] (6.,1.) circle (2.0pt);
\draw [color=black,fill=white] (6.,1.58) circle (2.0pt);
\draw [color=black,fill=white] (9.748669107409894,0.42118702624658433) circle (2.0pt);
\draw [color=black,fill=white] (9.748669107409894,1.0011870262465843) circle (2.0pt);
\draw [color=black,fill=white] (6.,2.16) circle (2.0pt);
\draw [color=black,fill=white] (9.748669107409894,1.5811870262465844) circle (2.0pt);
\draw [color=black,fill=white] (9.748669107409894,2.1611870262465844) circle (2.0pt);
\draw [color=black,fill=white] (13.497338214819786,0.4223740524931687) circle (2.0pt);
\draw [color=black,fill=white] (13.497338214819788,1.0023740524931686) circle (2.0pt);
\draw [color=black,fill=white] (13.497338214819786,1.5823740524931684) circle (2.0pt);
\draw [color=black,fill=white] (13.497338214819788,2.1623740524931687) circle (2.0pt);
\draw [color=black,fill=white] (9.746163048522668,-3.9023761150930785) circle (2.0pt);
\draw [color=black,fill=white] (9.746163048522668,-3.3223761150930784) circle (2.0pt);
\draw [color=black,fill=white] (3.006163048522667,5.0976238849069215) circle (2.0pt);
\draw [color=black,fill=white] (3.0061630485226676,5.677623884906922) circle (2.0pt);
\draw [color=black,fill=white] (9.746163048522668,5.0976238849069215) circle (2.0pt);
\draw [color=black,fill=white] (9.746163048522668,5.677623884906922) circle (2.0pt);
\draw [color=black,fill=white] (9.746163048522668,6.257623884906921) circle (2.0pt);
\draw [color=black,fill=white] (13.496163048522668,5.0976238849069215) circle (2.0pt);
\draw [color=black,fill=white] (13.496163048522668,5.677623884906922) circle (2.0pt);
\draw [color=black,fill=white] (13.496163048522668,6.257623884906921) circle (2.0pt);
\draw [color=black,fill=white] (13.496163048522662,-2.7423761150930797) circle (2.0pt);
\draw [color=black,fill=white] (13.496163048522662,-3.9023761150930794) circle (2.0pt);
\draw [color=black,fill=white] (13.496163048522664,-3.3223761150930793) circle (2.0pt);
\end{scriptsize}

\draw [line width=1.pt] (13.176163048522668,5.0976238849069215)-- (13.816163048522668,5.0976238849069215);

\draw [line width=0.4pt,color=gray,fill=gray!30] (5.,-1.) circle (0.5989134213566999cm);
\draw [line width=1.pt] (5.,-1.) circle (0.5989134213566998cm);
\draw [->,line width=1.pt] (4.7938704670111685,1.3134074943567349) -- (4.196383580474334,1.3089418418992234);
\draw [line width=1.pt] (4.495127023742752,1.311174668127979)-- (3.598896693937501,1.3044761894417118);
\draw [->,line width=1.pt] (7.033399691108034,0.7229519661204904) -- (9.741918475738661,-0.8388119424534572);
\draw [line width=1.pt] (8.387659083423348,-0.057929988166483426)-- (12.450437260369286,-2.400575851027405);
\draw [line width=1.pt] (7.193836891315973,1.3227560983307756)-- (8.542506119048584,1.323183158101966);
\draw [->,line width=1.pt] (7.8681715051822785,1.3229696282163708) -- (8.373922465582009,1.323129775630567);
\draw [->,line width=1.pt] (7.8681715051822785,1.3229696282163708) -- (7.36242054478255,1.3228094808021744);
\draw [line width=1.pt] (12.291175226458455,1.3243701843485511)-- (10.942505998725869,1.3239431245773605);
\draw [->,line width=1.pt] (11.616840612592162,1.3241566544629557) -- (11.111089652192442,1.3239965070487592);
\draw [->,line width=1.pt] (11.616840612592162,1.3241566544629557) -- (12.122591572991881,1.3243168018771523);
\draw [->,line width=1.pt] (3.,1.8989134213567007) -- (3.,3.349456710678351);
\draw [line width=1.pt] (3.,2.624185066017526)-- (3.,4.8);
\draw [->,line width=1.pt] (8.664893921086732,0.7955933690372579) -- (7.101362115687365,0.02955009397802555);
\draw [line width=1.pt] (7.8831280183870485,0.4125717315076417)-- (5.537830310287999,-0.7364931810812068);
\draw [->,line width=1.pt] (4.576504258415044,-1.423495741584954) -- (4.21251619791945,-1.7874838020805486);
\draw [line width=1.pt] (4.394510228167247,-1.6054897718327514)-- (3.8485281374238554,-2.151471862576143);
\draw [->,line width=1.pt] (5.521722648850835,0.21914989837436202) -- (5.378676154532693,-0.11511797248556535);
\draw [line width=1.pt] (5.450199401691764,0.05201596294439834)-- (5.235629660214552,-0.4493858433454927);
\draw [->,line width=1.pt] (8.756460834546687,2.007459928548946) -- (6.3712530294436185,3.661781570669828);
\draw [line width=1.pt] (7.5638569319951525,2.834620749609387)-- (3.98604522434055,5.3161032127907095);
\draw [->,line width=1.pt] (12.704892496263342,0.41824061837864707) -- (11.615587583148558,-0.8376249162068792);
\draw [line width=1.pt] (12.16024003970595,-0.2096921489141161)-- (10.526282670033774,-2.0934904507924057);
\draw [->,line width=1.pt] (12.381742274262356,0.8674114986860584) -- (8.245587583148556,-0.8376249162068774);
\draw [line width=1.pt] (10.313664928705457,0.01489329123959049)-- (4.109432892034756,-2.542661331099813);
\draw [->,line width=1.pt] (9.74186299017407,2.523562969032413) -- (9.741253029443614,3.6617815706698336);
\draw [line width=1.pt] (9.741558009808841,3.092672269851123)-- (9.740643068713158,4.8000001723072545);
\draw [->,line width=1.pt] (13.490873535461866,2.5247501296774346) -- (13.490587583148564,3.6623750837931253);
\draw [line width=1.pt] (13.490730559305215,3.09356260673528)-- (13.490301630835262,4.8000000379088155);

\draw (4.6,-0.7) node[anchor=north west] {\scalebox{1}{$\mathbb{Z}^{+}$}};
\end{tikzpicture}
}
\end{center}
\vspace{-0.5cm}
    \caption{Stalks with three edges of different colors.}
    \label{fig:fig13}
\end{figure}

There are stalks with three edges that, due to the presence of carry-on moves, have lunar values. Even so, although repetitive play is possible, their respective digraphs still do not exhibit cyclic zones.

\begin{center}\scalebox{0.75}{
\definecolor{aqaqaq}{rgb}{0.6274509803921569,0.6274509803921569,0.6274509803921569}
\definecolor{qqffqq}{rgb}{0.,1.,0.}
\definecolor{qqwuqq}{rgb}{0.,0.39215686274509803,0.}
\begin{tikzpicture}
\clip(1.,-4.3) rectangle (16.4,3.1);
\draw [line width=0.4pt,color=gray,fill=gray!30] (3.,1.3) circle (0.5989134213566999cm);
\draw (5.887288676594245-0.2,2.2802364886905377) node[anchor=north west] {$\moon\!\!_{0}$};
\draw [line width=1.pt] (5.68,0.42)-- (6.32,0.42);
\draw [line width=3.6pt,color=qqwuqq] (6.,0.42)-- (6.,1.);
\draw [line width=1.pt] (5.993836951477332,1.3223761150930786) circle (1.2cm);
\draw [line width=1.pt] (9.742506058887226,1.323563141339663) circle (1.2cm);
\draw [line width=1.pt] (9.428669107409894,0.4211870262465843)-- (10.068669107409894,0.4211870262465842);
\draw [line width=3.6pt,color=qqwuqq] (9.748669107409894,0.42118702624658433)-- (9.748669107409894,1.0011870262465843);
\draw [line width=1.pt] (13.49117516629712,1.3247501675862472) circle (1.2cm);
\draw [line width=3.6pt,color=qqffqq] (13.497338214819786,0.4223740524931687)-- (13.497338214819788,1.0023740524931686);
\draw [line width=1.pt] (13.177338214819788,0.42237405249316856)-- (13.817338214819788,0.4223740524931685);
\draw [line width=1.pt] (3.,-3.) circle (1.2cm);
\draw [line width=1.pt] (2.6861630485226673,-3.9023761150930785)-- (3.326163048522668,-3.9023761150930785);
\draw [line width=1.pt] (3.,1.3) circle (0.5989134213566998cm);
\draw [->,line width=1.pt] (3.,0.701086578643299) -- (3.,-0.5494567106783511);
\draw [line width=1.pt] (3.,0.07581493398247402)-- (3.,-1.8);
\draw [->,line width=1.pt] (4.7938704670111685,1.3134074943567349) -- (4.196383580474334,1.3089418418992234);
\draw [line width=1.pt] (4.495127023742752,1.311174668127979)-- (3.598896693937501,1.3044761894417118);
\draw [line width=1.pt] (8.542506119048584,1.323183158101966)-- (7.193836891315973,1.3227560983307756);
\draw [->,line width=1.pt] (7.8681715051822785,1.3229696282163708) -- (7.36242054478255,1.3228094808021744);
\draw [->,line width=1.pt] (7.8681715051822785,1.3229696282163708) -- (8.373922465582009,1.323129775630567);
\draw [line width=1.pt] (12.291175226458455,1.3243701843485511)-- (10.942505998725869,1.3239431245773605);
\draw [->,line width=1.pt] (11.616840612592162,1.3241566544629557) -- (11.111089652192442,1.3239965070487592);
\draw [->,line width=1.pt] (11.616840612592162,1.3241566544629557) -- (12.122591572991881,1.3243168018771523);
\draw (9.647342749587647-0.2,2.2514238138016993) node[anchor=north west] {$\moon\!\!_{0}$};
\draw (13.39299048513663-0.12,2.222611138912861) node[anchor=north west] {$1$};
\draw (2.905176825599478-0.15,-1.9) node[anchor=north west] {$0$};
\draw (2.7,1.6) node[anchor=north west] {$\mathbb{Z}^{+}$};
\draw [line width=3.2pt,color=qqwuqq] (5.580604894521872,1.5417015006620476)-- (6.,1.);
\draw [line width=3.2pt,color=qqffqq] (6.419395105478128,1.5417015006620476)-- (6.,1.);
\draw [line width=3.2pt,color=qqffqq] (9.329274001931765,1.5428885269086319)-- (9.748669107409894,1.0011870262465843);
\draw [line width=3.2pt,color=qqwuqq] (10.168064212888023,1.5428885269086319)-- (9.748669107409894,1.0011870262465843);
\draw [line width=3.2pt,color=qqwuqq] (13.916733320297917,1.5440755531552162)-- (13.497338214819788,1.0023740524931686);
\draw [line width=3.2pt,color=qqwuqq] (13.077943109341659,1.5440755531552162)-- (13.497338214819788,1.0023740524931686);
\draw [shift={(9.74724040978494,-1.0479414212973992)},line width=1.pt]  plot[domain=0.8333549824263414:1.1282840804379441,variable=\t]({1.*4.034067430991071*cos(\t r)+0.*4.034067430991071*sin(\t r)},{0.*4.034067430991071*cos(\t r)+1.*4.034067430991071*sin(\t r)});
\draw [shift={(9.747240409785224,-1.047941421297077)},line width=1.pt]  plot[domain=2.015677385256428:2.3106064832680766,variable=\t]({1.*4.034067430990676*cos(\t r)+0.*4.034067430990676*sin(\t r)},{0.*4.034067430990676*cos(\t r)+1.*4.034067430990676*sin(\t r)});
\draw [shift={(9.747240409785205,-1.0479414212970595)},line width=1.pt]  plot[domain=1.2770516348355718:1.8669098308588559,variable=\t]({1.*4.034067430990652*cos(\t r)+0.*4.034067430990652*sin(\t r)},{0.*4.034067430990652*cos(\t r)+1.*4.034067430990652*sin(\t r)});
\draw [->,line width=1.pt] (8.570079069669216,2.8105545573344917) -- (8.01117739574482,2.5934573826051626);
\draw [->,line width=1.pt] (10.915258403785234,2.8133322032304315) -- (11.47467277157061,2.5975595694086078);
\draw [line width=1.pt] (7.,-3.) circle (1.2cm);
\draw [line width=1.pt] (11.,-3.) circle (1.2cm);
\draw [line width=1.pt] (6.686163048522666,-3.9023761150930785)-- (7.326163048522667,-3.9023761150930785);
\draw [line width=3.6pt,color=qqwuqq] (7.006163048522667,-3.9023761150930785)-- (7.006163048522667,-3.3223761150930784);
\draw [line width=3.6pt,color=qqwuqq] (7.006163048522667,-3.3223761150930784)-- (7.006163048522667,-2.7423761150930783);
\draw (6.9-0.15,-1.9) node[anchor=north west] {$2$};
\draw (10.915100444696535-0.15,-1.9) node[anchor=north west] {$0$};
\draw [line width=1.pt] (10.686163048522666,-3.9023761150930785)-- (11.326163048522666,-3.9023761150930785);
\draw [line width=3.6pt,color=qqwuqq] (11.006163048522668,-3.9023761150930785)-- (11.006163048522666,-3.3223761150930784);
\draw [line width=3.6pt,color=qqffqq] (11.006163048522666,-3.3223761150930784)-- (11.006163048522668,-2.7423761150930783);
\draw [line width=1.pt] (15.,-3.) circle (1.2cm);
\draw [line width=1.pt] (14.686163048522666,-3.9023761150930785)-- (15.326163048522666,-3.9023761150930785);
\draw [line width=3.6pt,color=qqffqq] (15.006163048522668,-3.9023761150930785)-- (15.006163048522666,-3.3223761150930784);
\draw [line width=3.6pt,color=qqwuqq] (15.006163048522666,-3.3223761150930784)-- (15.006163048522668,-2.7423761150930783);
\draw [->,line width=1.pt] (6.26589922104285,0.15362373692272524) -- (6.496918475738669,-0.8388119424534604);
\draw [line width=1.pt] (6.381408848390759,-0.34259410276536756)-- (6.727937730434487,-1.8312476218296458);
\draw [->,line width=1.pt] (10.077635194268451,0.17130949625164418) -- (10.371253029443611,-0.8382184293301669);
\draw [line width=1.pt] (10.22444411185603,-0.33345446653926136)-- (10.664870864618774,-1.847746354911978);
\draw [->,line width=1.pt] (13.886466377042495,0.19172551705675359) -- (14.245587583148563,-0.8376249162068836);
\draw [line width=1.pt] (14.066026980095529,-0.32294969957506503)-- (14.604708789254632,-1.8669753494705208);
\draw [->,line width=1.pt] (12.381742274262356,0.8674114986860584) -- (8.245587583148556,-0.8376249162068774);
\draw [line width=1.pt] (10.313664928705457,0.01489329123959049)-- (4.109432892034756,-2.542661331099813);
\draw [->,line width=1.pt] (6.334978683492921,-0.22868616265896535) -- (5.735809101044162,-0.20511890003626812);
\draw [shift={(6.203537690192571,4.057953263591205)},line width=1.pt]  plot[domain=4.743042321880924:5.353732509967569,variable=\t]({1.*4.288654136836191*cos(\t r)+0.*4.288654136836191*sin(\t r)},{0.*4.288654136836191*cos(\t r)+1.*4.288654136836191*sin(\t r)});
\draw [shift={(6.203537690192552,4.057953263591209)},line width=1.pt]  plot[domain=3.992419562792838:4.603109750879483,variable=\t]({1.*4.288654136836194*cos(\t r)+0.*4.288654136836194*sin(\t r)},{0.*4.288654136836194*cos(\t r)+1.*4.288654136836194*sin(\t r)});
\draw [->,line width=1.pt] (8.533945620529318,0.03187646003873024) -- (9.077101914321572,-0.22192864502143994);
\draw [shift={(7.206522880151855,-3.5169722172399736)},line width=1.pt]  plot[domain=1.212864752501719:1.8156774990265676,variable=\t]({1.*3.788981138234132*cos(\t r)+0.*3.788981138234132*sin(\t r)},{0.*3.788981138234132*cos(\t r)+1.*3.788981138234132*sin(\t r)});
\draw [shift={(7.206522880151766,-3.5169722172399918)},line width=1.pt]  plot[domain=0.4516566977116328:1.0544694442364722,variable=\t]({1.*3.788981138234188*cos(\t r)+0.*3.788981138234188*sin(\t r)},{0.*3.788981138234188*cos(\t r)+1.*3.788981138234188*sin(\t r)});
\draw [->,line width=1.pt] (9.660010073997823,-1.2040898016385486) -- (9.273711589110828,-1.6618696915712143);
\draw [shift={(7.503652953473375,0.2236771872164021)},line width=1.pt]  plot[domain=5.698337065222645:6.26428170876986,variable=\t]({1.*2.586193072007661*cos(\t r)+0.*2.586193072007661*sin(\t r)},{0.*2.586193072007661*cos(\t r)+1.*2.586193072007661*sin(\t r)});
\draw [shift={(7.503652953473331,0.2236771872164218)},line width=1.pt]  plot[domain=4.900260907454909:5.466205551002114,variable=\t]({1.*2.5861930720077093*cos(\t r)+0.*2.5861930720077093*sin(\t r)},{0.*2.5861930720077093*cos(\t r)+1.*2.5861930720077093*sin(\t r)});
\draw (14.920062254245062-0.17,-1.9) node[anchor=north west] {$2$};
\begin{scriptsize}
\draw [color=black, fill=white] (6.,0.42) circle (2.0pt);
\draw [color=black, fill=white] (9.748669107409894,0.42118702624658433) circle (2.0pt);
\draw [color=black, fill=white] (9.748669107409894,1.0011870262465843) circle (2.0pt);
\draw [color=black, fill=white] (13.497338214819786,0.4223740524931687) circle (2.0pt);
\draw [color=black, fill=white] (13.497338214819788,1.0023740524931686) circle (2.0pt);
\draw [color=black, fill=white] (5.580604894521872,1.5417015006620476) circle (2.0pt);
\draw [color=black, fill=white] (6.419395105478128,1.5417015006620476) circle (2.0pt);
\draw [color=black, fill=white] (6.,1.) circle (2.0pt);
\draw [color=black, fill=white] (9.329274001931765,1.5428885269086319) circle (2.0pt);
\draw [color=black, fill=white] (9.748669107409894,1.0011870262465843) circle (2.0pt);
\draw [color=black, fill=white] (10.168064212888023,1.5428885269086319) circle (2.0pt);
\draw [color=black, fill=white] (9.748669107409894,1.0011870262465843) circle (2.0pt);
\draw [color=black, fill=white] (13.916733320297917,1.5440755531552162) circle (2.0pt);
\draw [color=black, fill=white] (13.497338214819788,1.0023740524931686) circle (2.0pt);
\draw [color=black, fill=white] (13.077943109341659,1.5440755531552162) circle (2.0pt);
\draw [color=black, fill=white] (13.497338214819788,1.0023740524931686) circle (2.0pt);
\draw [color=black, fill=white] (7.006163048522667,-3.9023761150930785) circle (2.0pt);
\draw [color=black, fill=white] (7.006163048522667,-3.3223761150930784) circle (2.0pt);
\draw [color=black, fill=white] (7.006163048522667,-2.7423761150930783) circle (2.0pt);
\draw [color=black, fill=white] (11.006163048522668,-3.9023761150930785) circle (2.0pt);
\draw [color=black, fill=white] (11.006163048522666,-3.3223761150930784) circle (2.0pt);
\draw [color=black, fill=white] (11.006163048522668,-2.7423761150930783) circle (2.0pt);
\draw [color=black, fill=white] (15.006163048522668,-3.9023761150930785) circle (2.0pt);
\draw [color=black, fill=white] (15.006163048522666,-3.3223761150930784) circle (2.0pt);
\draw [color=black, fill=white] (15.006163048522668,-2.7423761150930783) circle (2.0pt);
\end{scriptsize}
\end{tikzpicture}
}
\end{center}
\clearpage
\begin{figure}[htb!]
\begin{center}\scalebox{0.75}{
\definecolor{aqaqaq}{rgb}{0.6274509803921569,0.6274509803921569,0.6274509803921569}
\definecolor{qqffqq}{rgb}{0.,1.,0.}
\definecolor{qqwuqq}{rgb}{0.,0.39215686274509803,0.}
\begin{tikzpicture}
\clip(1.,-4.3) rectangle (16.4,3.1);
\draw [line width=0.4pt,color=gray,fill=gray!30] (3.,1.3) circle (0.5989134213566999cm);
\draw (5.887288676594245-0.2,2.2802364886905377) node[anchor=north west] {$\moon\!\!_{0}$};
\draw [line width=1.pt] (5.68,0.42)-- (6.32,0.42);
\draw [line width=3.6pt,color=qqwuqq] (6.,0.42)-- (6.,1.);
\draw [line width=1.pt] (5.993836951477332,1.3223761150930786) circle (1.2cm);
\draw [line width=3.pt] (9.742506058887226,1.323563141339663) circle (1.2cm);
\draw [line width=1.pt] (9.428669107409894,0.4211870262465843)-- (10.068669107409894,0.4211870262465842);
\draw [line width=3.6pt,color=qqffqq] (9.748669107409894,0.42118702624658433)-- (9.748669107409894,1.0011870262465843);
\draw [line width=3.pt] (13.49117516629712,1.3247501675862472) circle (1.2cm);
\draw [line width=3.6pt,color=qqffqq] (13.497338214819786,0.4223740524931687)-- (13.497338214819788,1.0023740524931686);
\draw [line width=1.pt] (13.177338214819788,0.42237405249316856)-- (13.817338214819788,0.4223740524931685);
\draw [line width=1.pt] (3.,-3.) circle (1.2cm);
\draw [line width=1.pt] (2.6861630485226673,-3.9023761150930785)-- (3.326163048522668,-3.9023761150930785);
\draw [line width=1.pt] (3.,1.3) circle (0.5989134213566998cm);
\draw [->,line width=1.pt] (3.,0.701086578643299) -- (3.,-0.5494567106783511);
\draw [line width=1.pt] (3.,0.07581493398247402)-- (3.,-1.8);
\draw [->,line width=1.pt] (4.7938704670111685,1.3134074943567349) -- (4.196383580474334,1.3089418418992234);
\draw [line width=1.pt] (4.495127023742752,1.311174668127979)-- (3.598896693937501,1.3044761894417118);
\draw [line width=1.pt] (8.542506119048584,1.323183158101966)-- (7.193836891315973,1.3227560983307756);
\draw [->,line width=1.pt] (7.8681715051822785,1.3229696282163708) -- (7.36242054478255,1.3228094808021744);
\draw [->,line width=1.pt] (7.8681715051822785,1.3229696282163708) -- (8.373922465582009,1.323129775630567);
\draw [line width=1.pt] (12.291175226458455,1.3243701843485511)-- (10.942505998725869,1.3239431245773605);
\draw [->,line width=1.pt] (11.616840612592162,1.3241566544629557) -- (11.111089652192442,1.3239965070487592);
\draw [->,line width=1.pt] (11.616840612592162,1.3241566544629557) -- (12.122591572991881,1.3243168018771523);
\draw (9.647342749587647-0.45,2.2514238138016993) node[anchor=north west] {$\infty_{0,2}$};
\draw (13.39299048513663-0.45,2.222611138912861) node[anchor=north west] {$\infty_{0,2}$};
\draw (2.905176825599478-0.15,-1.9) node[anchor=north west] {$0$};
\draw (2.7,1.6) node[anchor=north west] {$\mathbb{Z}^{+}$};
\draw [line width=3.2pt,color=qqffqq] (5.580604894521872,1.5417015006620476)-- (6.,1.);
\draw [line width=3.2pt,color=qqffqq] (6.419395105478128,1.5417015006620476)-- (6.,1.);
\draw [line width=3.2pt,color=qqwuqq] (9.329274001931765,1.5428885269086319)-- (9.748669107409894,1.0011870262465843);
\draw [line width=3.2pt,color=qqffqq] (10.168064212888023,1.5428885269086319)-- (9.748669107409894,1.0011870262465843);
\draw [line width=3.2pt,color=qqwuqq] (13.916733320297917,1.5440755531552162)-- (13.497338214819788,1.0023740524931686);
\draw [line width=3.2pt,color=qqffqq] (13.077943109341659,1.5440755531552162)-- (13.497338214819788,1.0023740524931686);

\draw [shift={(9.74724040978494,-1.0479414212973992)},line width=1.pt]  plot[domain=0.8333549824263414:1.1282840804379441,variable=\t]({1.*4.034067430991071*cos(\t r)+0.*4.034067430991071*sin(\t r)},{0.*4.034067430991071*cos(\t r)+1.*4.034067430991071*sin(\t r)});
\draw [shift={(9.747240409785224,-1.047941421297077)},line width=1.pt]  plot[domain=2.015677385256428:2.3106064832680766,variable=\t]({1.*4.034067430990676*cos(\t r)+0.*4.034067430990676*sin(\t r)},{0.*4.034067430990676*cos(\t r)+1.*4.034067430990676*sin(\t r)});
\draw [shift={(9.747240409785205,-1.0479414212970595)},line width=1.pt]  plot[domain=1.2770516348355718:1.8669098308588559,variable=\t]({1.*4.034067430990652*cos(\t r)+0.*4.034067430990652*sin(\t r)},{0.*4.034067430990652*cos(\t r)+1.*4.034067430990652*sin(\t r)});

\draw [->,line width=1.pt] (8.570079069669216,2.8105545573344917) -- (8.01117739574482,2.5934573826051626);
\draw [->,line width=1.pt] (10.915258403785234,2.8133322032304315) -- (11.47467277157061,2.5975595694086078);
\draw [line width=1.pt] (7.,-3.) circle (1.2cm);
\draw [line width=1.pt] (11.,-3.) circle (1.2cm);
\draw [line width=1.pt] (6.686163048522666,-3.9023761150930785)-- (7.326163048522667,-3.9023761150930785);
\draw [line width=3.6pt,color=qqwuqq] (7.006163048522667,-3.9023761150930785)-- (7.006163048522667,-3.3223761150930784);
\draw [line width=3.6pt,color=qqffqq] (7.006163048522667,-3.3223761150930784)-- (7.006163048522667,-2.7423761150930783);
\draw (6.881325960259168-0.13,-1.9) node[anchor=north west] {$0$};
\draw (10.915100444696535-0.15,-1.9) node[anchor=north west] {$2$};
\draw [line width=1.pt] (10.686163048522666,-3.9023761150930785)-- (11.326163048522666,-3.9023761150930785);
\draw [line width=3.6pt,color=qqffqq] (11.006163048522668,-3.9023761150930785)-- (11.006163048522666,-3.3223761150930784);
\draw [line width=3.6pt,color=qqffqq] (11.006163048522666,-3.3223761150930784)-- (11.006163048522668,-2.7423761150930783);
\draw [line width=1.pt] (15.,-3.) circle (1.2cm);
\draw [line width=1.pt] (14.686163048522666,-3.9023761150930785)-- (15.326163048522666,-3.9023761150930785);
\draw [line width=3.6pt,color=qqffqq] (15.006163048522668,-3.9023761150930785)-- (15.006163048522666,-3.3223761150930784);
\draw [line width=3.6pt,color=qqwuqq] (15.006163048522666,-3.3223761150930784)-- (15.006163048522668,-2.7423761150930783);
\draw [line width=1.pt] (6.450899610002213,-0.5781985996476287)-- (6.727937730434487,-1.831247621829645);
\draw [->,line width=1.pt] (6.355801893019068,-0.14806970225423954) -- (6.450899610002213,-0.5781985996476287);
\draw [line width=1.pt] (6.355801893019068,-0.14806970225423954)-- (6.2879183379723305,0.158968990257927);
\draw [->,line width=1.pt] (10.077635194268451,0.17130949625164418) -- (10.371253029443611,-0.8382184293301669);
\draw [line width=1.pt] (10.22444411185603,-0.33345446653926136)-- (10.664870864618774,-1.847746354911978);
\draw [->,line width=1.pt] (13.886466377042495,0.19172551705675359) -- (14.245587583148563,-0.8376249162068836);
\draw [line width=1.pt] (14.066026980095529,-0.32294969957506503)-- (14.604708789254632,-1.8669753494705208);
\draw [->,line width=1.pt] (12.381742274262356,0.8674114986860584) -- (8.245587583148556,-0.8376249162068774);
\draw [line width=1.pt] (10.313664928705457,0.01489329123959049)-- (4.109432892034756,-2.542661331099813);
\draw [->,line width=1.pt] (11.367586843804855,-0.11853015131897837) -- (11.779128466571047,-0.46033116718166933);
\draw [line width=1.pt] (11.367586843804855,-0.11853015131897837)-- (10.618602671872981,0.5035297732899917);
\draw [line width=1.pt] (11.779128466571047,-0.46033116718166933)-- (14.013806733856967,-2.3163167094230426);

\draw [->,line width=1.pt] (12.892207023690888,0.28492462253415757) -- (11.940084553120618,-1.367986770397689);
\draw [line width=1.pt] (11.598968142606227,-1.9601744549479223)-- (12.003341650841005,-1.2581706692786767);
\draw [->,line width=1.pt] (8.73234938514145,0.6758104921850077) -- (4.961916269532205,-1.7419412314560105);
\draw [line width=1.pt] (4.010156673745773,-2.3522473508453463)-- (5.084967708723684,-1.663035752939483);

\draw (14.920062254245062-0.15,-1.9) node[anchor=north west] {$2$};
\begin{scriptsize}
\draw [color=black, fill=white] (6.,0.42) circle (2.0pt);
\draw [color=black, fill=white] (9.748669107409894,0.42118702624658433) circle (2.0pt);
\draw [color=black, fill=white] (9.748669107409894,1.0011870262465843) circle (2.0pt);
\draw [color=black, fill=white] (13.497338214819786,0.4223740524931687) circle (2.0pt);
\draw [color=black, fill=white] (13.497338214819788,1.0023740524931686) circle (2.0pt);
\draw [color=black, fill=white] (5.580604894521872,1.5417015006620476) circle (2.0pt);
\draw [color=black, fill=white] (6.419395105478128,1.5417015006620476) circle (2.0pt);
\draw [color=black, fill=white] (6.,1.) circle (2.0pt);
\draw [color=black, fill=white] (9.329274001931765,1.5428885269086319) circle (2.0pt);
\draw [color=black, fill=white] (9.748669107409894,1.0011870262465843) circle (2.0pt);
\draw [color=black, fill=white] (10.168064212888023,1.5428885269086319) circle (2.0pt);
\draw [color=black, fill=white] (9.748669107409894,1.0011870262465843) circle (2.0pt);
\draw [color=black, fill=white] (13.916733320297917,1.5440755531552162) circle (2.0pt);
\draw [color=black, fill=white] (13.497338214819788,1.0023740524931686) circle (2.0pt);
\draw [color=black, fill=white] (13.077943109341659,1.5440755531552162) circle (2.0pt);
\draw [color=black, fill=white] (13.497338214819788,1.0023740524931686) circle (2.0pt);
\draw [color=black, fill=white] (7.006163048522667,-3.9023761150930785) circle (2.0pt);
\draw [color=black, fill=white] (7.006163048522667,-3.3223761150930784) circle (2.0pt);
\draw [color=black, fill=white] (7.006163048522667,-2.7423761150930783) circle (2.0pt);
\draw [color=black, fill=white] (11.006163048522668,-3.9023761150930785) circle (2.0pt);
\draw [color=black, fill=white] (11.006163048522666,-3.3223761150930784) circle (2.0pt);
\draw [color=black, fill=white] (11.006163048522668,-2.7423761150930783) circle (2.0pt);
\draw [color=black, fill=white] (15.006163048522668,-3.9023761150930785) circle (2.0pt);
\draw [color=black, fill=white] (15.006163048522666,-3.3223761150930784) circle (2.0pt);
\draw [color=black, fill=white] (15.006163048522668,-2.7423761150930783) circle (2.0pt);
\end{scriptsize}
\end{tikzpicture}
}
\end{center}
\vspace{-0.5cm}
    \caption{``Glasses'' with three edges of different colors.}
    \label{fig:fig14}
\end{figure}
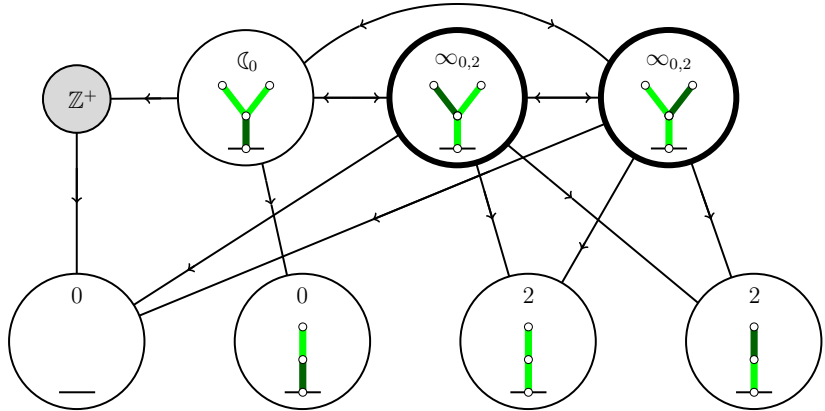

If we consider ``glasses'' with three edges, like those served by the bartender in Figure \ref{fig:fig11}, the situation is already different. The game digraphs of these positions are shown in Figure \ref{fig:fig14}; the case with two lime edges and one green edge contains a cyclic zone with two nodes, both with Grundy value $\infty_{0,2}$.

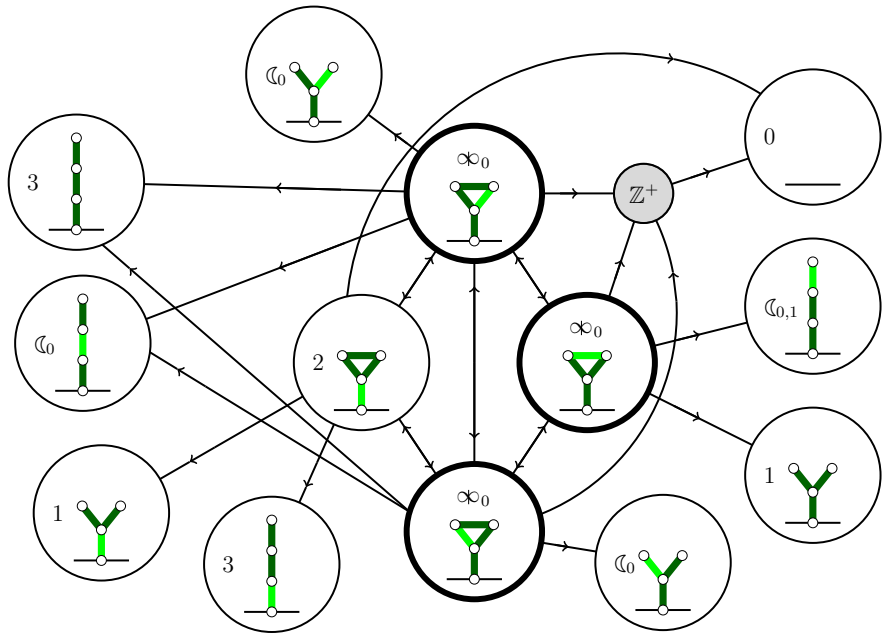
\begin{figure}[htb!]
\begin{center}
\scalebox{0.75}{
\definecolor{aqaqaq}{rgb}{0.6274509803921569,0.6274509803921569,0.6274509803921569}
\definecolor{qqffqq}{rgb}{0.,1.,0.}
\definecolor{qqwuqq}{rgb}{0.,0.39215686274509803,0.}
\begin{tikzpicture}
\clip(1.6,-5.9) rectangle (17.3,5.4);
\draw [line width=1.pt,color=gray,fill=gray!30] (13.,2.) circle (0.5252469384162191cm);
\draw [line width=1.pt] (16.,-3.) circle (1.2cm);
\draw [line width=1.pt] (7.155598004310521,4.114213889601654) circle (1.2cm);
\draw [line width=1.pt] (3.3890754935959952,-3.6677202845177903) circle (1.2cm);
\draw [line width=3.6pt,color=qqwuqq] (16.,-3.3006610325243906)-- (15.660053550746245,-2.8774978778865385);
\draw [line width=3.6pt,color=qqwuqq] (16.,-3.3006610325243906)-- (16.339946449253755,-2.8774978778865385);
\draw [line width=3.6pt,color=qqwuqq] (16.,-3.3006610325243906)-- (16.,-3.843459930647162);
\draw [line width=1.pt] (15.510242998265113,-3.843459930647162)-- (16.489757001734887,-3.843459930647162);
\draw [line width=3.6pt,color=qqwuqq] (7.155598004310521,3.8135528570772634)-- (7.155598004310521,3.270753958954492);
\draw [line width=3.6pt,color=qqwuqq] (7.155598004310521,3.8135528570772634)-- (6.815651555056766,4.2367160117151155);
\draw [line width=3.6pt,color=qqffqq] (7.155598004310521,3.8135528570772634)-- (7.495544453564277,4.2367160117151155);
\draw [line width=1.pt] (6.665841002575634,3.270753958954492)-- (7.645355006045408,3.270753958954492);
\draw [line width=1.pt] (2.8993184918611092,-4.511180215164952)-- (3.878832495330883,-4.511180215164952);
\draw [line width=3.6pt,color=qqffqq] (3.389075493595996,-3.968381317042181)-- (3.389075493595996,-4.511180215164952);
\draw [line width=3.6pt,color=qqwuqq] (3.389075493595996,-3.968381317042181)-- (3.7290219428497515,-3.545218162404329);
\draw [line width=3.6pt,color=qqwuqq] (3.389075493595996,-3.968381317042181)-- (3.0491290443422407,-3.545218162404329);
\draw [line width=1.pt] (16.,3.) circle (1.2cm);
\draw [line width=1.pt] (15.510242998265113,2.156540069352838)-- (16.489757001734887,2.156540069352838);
\draw [line width=3pt] (10.,-4.) circle (1.2cm);
\draw [line width=3pt] (10.,2.) circle (1.2cm);
\draw [line width=3pt] (12.,-1.) circle (1.2cm);
\draw [line width=1.pt] (8.,-1.) circle (1.2cm);
\draw [line width=1.pt] (11.510242998265113,-1.8434599306471622)-- (12.489757001734887,-1.8434599306471622);
\draw [line width=3.6pt,color=qqwuqq] (12.,-1.3006610325243906)-- (12.,-1.8434599306471622);
\draw [line width=3.6pt,color=qqwuqq] (12.,-1.3006610325243906)-- (12.339946449253755,-0.8774978778865385);
\draw [line width=3.6pt,color=qqwuqq] (12.,-1.3006610325243906)-- (11.660053550746245,-0.8774978778865385);
\draw [line width=3.6pt,color=qqffqq] (11.660053550746245,-0.8774978778865385)-- (12.339946449253755,-0.8774978778865385);
\draw [line width=1.pt] (9.510242998265113,-4.843459930647162)-- (10.489757001734887,-4.843459930647162);
\draw [line width=3.6pt,color=qqwuqq] (10.,-4.30066103252439)-- (10.,-4.843459930647162);
\draw [line width=3.6pt,color=qqwuqq] (10.,-4.30066103252439)-- (10.339946449253755,-3.8774978778865385);
\draw [line width=3.6pt,color=qqffqq] (10.,-4.30066103252439)-- (9.660053550746245,-3.8774978778865385);
\draw [line width=3.6pt,color=qqwuqq] (9.660053550746245,-3.8774978778865385)-- (10.339946449253755,-3.8774978778865385);
\draw [line width=1.pt] (9.510242998265113,1.1565400693528378)-- (10.489757001734887,1.1565400693528378);
\draw [line width=3.6pt,color=qqwuqq] (10.,1.6993389674756094)-- (10.,1.1565400693528378);
\draw [line width=3.6pt,color=qqffqq] (10.,1.6993389674756094)-- (10.339946449253755,2.1225021221134615);
\draw [line width=3.6pt,color=qqwuqq] (10.,1.6993389674756094)-- (9.660053550746245,2.1225021221134615);
\draw [line width=3.6pt,color=qqwuqq] (9.660053550746245,2.1225021221134615)-- (10.339946449253755,2.1225021221134615);
\draw [line width=1.pt] (7.510242998265113,-1.8434599306471622)-- (8.489757001734887,-1.8434599306471622);
\draw [line width=3.6pt,color=qqffqq] (8.,-1.3006610325243906)-- (8.,-1.8434599306471622);
\draw [line width=3.6pt,color=qqwuqq] (8.,-1.3006610325243906)-- (8.339946449253755,-0.8774978778865385);
\draw [line width=3.6pt,color=qqwuqq] (8.,-1.3006610325243906)-- (7.660053550746245,-0.8774978778865385);
\draw [line width=3.6pt,color=qqwuqq] (7.660053550746245,-0.8774978778865385)-- (8.339946449253755,-0.8774978778865385);
\draw [line width=1.pt] (6.40893054623936,-4.580313844382544) circle (1.2cm);
\draw [line width=1.pt] (3.05990670689117,-0.6580874495582656) circle (1.2cm);
\draw [line width=1.pt] (16.,0.) circle (1.2cm);
\draw [line width=1.pt] (15.510242998265113,-0.8434599306471622)-- (16.489757001734887,-0.8434599306471622);
\draw [line width=3.6pt,color=qqwuqq] (16.,-0.3006610325243906)-- (16.,-0.8434599306471622);
\draw [line width=3.6pt,color=qqwuqq] (16.,-0.3006610325243906)-- (16.,0.24213786559838102);
\draw [line width=3.6pt,color=qqffqq] (16.,0.24213786559838102)-- (16.,0.7849367637211526);
\draw [line width=1.pt] (2.5701497051562825,-1.5015473802054276)-- (3.5496637086260563,-1.5015473802054276);
\draw [line width=3.6pt,color=qqwuqq] (3.0599067068911694,-0.958748482082656)-- (3.0599067068911694,-1.5015473802054276);
\draw [line width=3.6pt,color=qqffqq] (3.0599067068911694,-0.958748482082656)-- (3.0599067068911694,-0.41594958395988435);
\draw [line width=3.6pt,color=qqwuqq] (3.0599067068911694,-0.41594958395988435)-- (3.0599067068911694,0.12684931416288725);
\draw [line width=1.pt] (5.919173544504474,-5.423773775029706)-- (6.898687547974248,-5.423773775029706);
\draw [line width=3.6pt,color=qqffqq] (6.408930546239361,-4.880974876906935)-- (6.408930546239361,-5.423773775029706);
\draw [line width=3.6pt,color=qqwuqq] (6.408930546239361,-4.880974876906935)-- (6.408930546239361,-4.338175978784164);
\draw [line width=3.6pt,color=qqwuqq] (6.408930546239361,-4.338175978784164)-- (6.408930546239361,-3.7953770806613925);
\draw [line width=1.pt] (13.344641601211482,-4.547128624023825) circle (1.2cm);
\draw [line width=3.6pt,color=qqffqq] (13.344641601211482,-4.8477896565482155)-- (13.004695151957726,-4.424626501910364);
\draw [line width=3.6pt,color=qqwuqq] (13.344641601211482,-4.8477896565482155)-- (13.684588050465237,-4.424626501910364);
\draw [line width=1.pt] (12.854884599476595,-5.3905885546709875)-- (13.834398602946369,-5.3905885546709875);
\draw [line width=3.6pt,color=qqwuqq] (13.344641601211482,-4.8477896565482155)-- (13.344641601211482,-5.3905885546709875);
\draw [line width=1.pt] (10.,-2.8)-- (10.,0.8);
\draw [->,line width=1.pt] (10.,-1.) -- (10.,0.35);
\draw [->,line width=1.pt] (10.,-1.) -- (10.,-2.35);
\draw [line width=1.pt] (8.665640235470272,-0.0015396467945872072)-- (9.334359764529724,1.0015396467945894);
\draw [->,line width=1.pt] (9.,0.5) -- (9.250769823397292,0.8761547350959423);
\draw [->,line width=1.pt] (9.,0.5) -- (8.749230176602705,0.12384526490405989);
\draw [line width=1.pt] (11.334359764529722,-0.0015396467945841255)-- (10.665640235470272,1.0015396467945907);
\draw [->,line width=1.pt] (11.,0.5) -- (10.749230176602705,0.8761547350959439);
\draw [->,line width=1.pt] (11.,0.5) -- (11.250769823397292,0.12384526490406272);
\draw [line width=1.pt] (9.334359764529726,-3.0015396467945923)-- (8.665640235470272,-1.9984603532054128);
\draw [->,line width=1.pt] (9.,-2.5) -- (8.749230176602705,-2.1238452649040602);
\draw [->,line width=1.pt] (9.,-2.5) -- (9.250769823397295,-2.8761547350959447);
\draw [line width=1.pt] (11.334359764529722,-1.9984603532054142)-- (10.665640235470272,-3.0015396467945896);
\draw [->,line width=1.pt] (11.,-2.5) -- (10.749230176602705,-2.876154735095943);
\draw [->,line width=1.pt] (11.,-2.5) -- (11.250769823397292,-2.123845264904061);
\draw [->,line width=1.pt] (13.498292997827614,2.1660976659425377) -- (14.179936520083515,2.3933121733611715);
\draw [line width=1.pt] (13.839114758955564,2.279704919651855)-- (14.861580042339416,2.6205266807798053);
\draw [->,line width=1.pt] (11.2,2.) -- (11.837376530791893,2.);
\draw [line width=1.pt] (11.518688265395946,2.)-- (12.474753061583787,2.);
\draw [->,line width=1.pt] (12.379473319220207,0.1384199576606214) -- (12.60668782663883,0.8200634799164915);
\draw [line width=1.pt] (12.493080572929518,0.4792417187885565)-- (12.833902334057454,1.5017070021723617);
\draw [->,line width=1.pt] (13.164171000174397,-0.7089572499564007) -- (14.,-0.5);
\draw [line width=1.pt] (13.58208550008719,-0.6044786249782025)-- (14.835828999825571,-0.2910427500436077);
\draw [->,line width=1.pt] (13.0733126291999,-1.5366563145999486) -- (14.,-2.);
\draw [line width=1.pt] (13.536656314599952,-1.7683281572999732)-- (14.926687370800103,-2.463343685400047);
\draw [line width=1.pt] (2.945633254654797,2.1987488563510587) circle (1.2cm);
\draw [line width=1.pt] (2.4558762529199107,1.3552889257038965)-- (3.4353902563896845,1.3552889257038965);
\draw [line width=3.6pt,color=qqwuqq] (2.9456332546547976,1.898087823826668)-- (2.9456332546547976,1.3552889257038965);
\draw [line width=3.6pt,color=qqwuqq] (2.9456332546547976,1.898087823826668)-- (2.9456332546547976,2.4408867219494397);
\draw [line width=3.6pt,color=qqwuqq] (2.9456332546547976,2.4408867219494397)-- (2.9456332546547976,2.9836856200722113);
\draw [->,line width=1.pt] (8.800475977042336,2.0337952414914873) -- (6.472816627327396,2.0993744281755293);
\draw [line width=1.pt] (7.636646302184866,2.0665848348335083)-- (4.145157277612457,2.1649536148595714);
\draw [->,line width=1.pt] (8.87938157214269,1.5707980205699936) -- (6.529953353445579,0.6709562752208661);
\draw [line width=1.pt] (7.704667462794134,1.1208771478954298)-- (4.1805251347484695,-0.22888547012826144);
\draw [->,line width=1.pt] (9.036906708584448,2.7158570472031185) -- (8.577799002155265,3.0571069448008217);
\draw [line width=1.pt] (8.807352855369857,2.88648199600197)-- (8.118691295726082,3.398356842398525);
\draw [line width=1.pt] (13.,2.) circle (0.5252469384162191cm);
\draw [->,line width=1.pt] (11.184259450000571,-4.193725463180069) -- (11.672320800605736,-4.273564312011911);
\draw [line width=1.pt] (11.428290125303153,-4.23364488759599)-- (12.160382151210902,-4.353403160843753);
\draw [->,line width=1.pt] (13.583265964053293,0.08081865403246966) -- (13.519934345924606,0.6047138158230274);
\draw [shift={(9.736765419148554,-0.11839407792675105)},line width=1.pt]  plot[domain=-1.1919996642137018:0.051744410689094064,variable=\t]({1.*3.851655767916846*cos(\t r)+0.*3.851655767916846*sin(\t r)},{0.*3.851655767916846*cos(\t r)+1.*3.851655767916846*sin(\t r)});
\draw [shift={(9.736765419148739,-0.11839407792671236)},line width=1.pt]  plot[domain=0.18886020710924725:0.44037443680116023,variable=\t]({1.*3.8516557679166565*cos(\t r)+0.*3.8516557679166565*sin(\t r)},{0.*3.8516557679166565*cos(\t r)+1.*3.8516557679166565*sin(\t r)});
\draw [->,line width=1.pt] (5.398899149496775,-2.504906335232014) -- (4.9540086922955995,-2.762304454723146);
\draw [->,line width=1.pt] (7.210795212571324,-2.775912936927613) -- (7.01819756391084,-3.209306982841085);
\draw [line width=1.pt] (5.398899149496775,-2.504906335232014)-- (6.961316360543092,-1.600946168241856);
\draw [line width=1.pt] (4.9540086922955995,-2.762304454723146)-- (4.4277591330529,-3.0667741162759365);
\draw [line width=1.pt] (7.210795212571324,-2.775912936927613)-- (7.512680171092563,-2.0965944484419134);
\draw [line width=1.pt] (7.01819756391084,-3.209306982841085)-- (6.896250375146798,-3.4837193959406294);
\draw [->,line width=1.pt] (4.331430080887537,0.28493434403221507) -- (3.8883759779073825,0.6703261913778793);
\draw [->,line width=1.pt] (5.234733263102365,-1.4235275245174166) -- (4.73333196087114,-1.1152556146499855);
\draw [line width=1.pt] (3.8883759779073825,0.6703261913778793)-- (3.4054966747739197,1.0903601001665044);
\draw [line width=1.pt] (4.73333196087114,-1.1152556146499855)-- (4.2492531338424095,-0.8176339214663589);
\draw [line width=1.pt] (5.234733263102365,-1.4235275245174166)-- (8.852792276049836,-3.647985173452758);
\draw [line width=1.pt] (8.852792276049836,-3.647985173452758)-- (4.331430080887537,0.28493434403221507);
\begin{scriptsize}
\draw [color=black,fill=white] (16.,-3.843459930647162) circle (2.5pt);
\draw [color=black,fill=white] (16.,-3.3006610325243906) circle (2.5pt);
\draw [color=black,fill=white] (15.660053550746245,-2.8774978778865385) circle (2.5pt);
\draw [color=black,fill=white] (16.339946449253755,-2.8774978778865385) circle (2.5pt);
\draw [color=black,fill=white] (7.155598004310521,3.270753958954492) circle (2.5pt);
\draw [color=black,fill=white] (6.815651555056766,4.2367160117151155) circle (2.5pt);
\draw [color=black,fill=white] (7.155598004310521,3.8135528570772634) circle (2.5pt);
\draw [color=black,fill=white] (7.495544453564277,4.2367160117151155) circle (2.5pt);
\draw [color=black,fill=white] (3.389075493595996,-4.511180215164952) circle (2.5pt);
\draw [color=black,fill=white] (3.7290219428497515,-3.545218162404329) circle (2.5pt);
\draw [color=black,fill=white] (3.389075493595996,-3.968381317042181) circle (2.5pt);
\draw [color=black,fill=white] (3.0491290443422407,-3.545218162404329) circle (2.5pt);
\draw [color=black,fill=white] (12.,-1.8434599306471622) circle (2.5pt);
\draw [color=black,fill=white] (12.339946449253755,-0.8774978778865385) circle (2.5pt);
\draw [color=black,fill=white] (12.,-1.3006610325243906) circle (2.5pt);
\draw [color=black,fill=white] (11.660053550746245,-0.8774978778865385) circle (2.5pt);
\draw [color=black,fill=white] (10.,-4.843459930647162) circle (2.5pt);
\draw [color=black,fill=white] (10.339946449253755,-3.8774978778865385) circle (2.5pt);
\draw [color=black,fill=white] (10.,-4.30066103252439) circle (2.5pt);
\draw [color=black,fill=white] (9.660053550746245,-3.8774978778865385) circle (2.5pt);
\draw [color=black,fill=white] (10.,1.1565400693528378) circle (2.5pt);
\draw [color=black,fill=white] (10.339946449253755,2.1225021221134615) circle (2.5pt);
\draw [color=black,fill=white] (10.,1.6993389674756094) circle (2.5pt);
\draw [color=black,fill=white] (9.660053550746245,2.1225021221134615) circle (2.5pt);
\draw [color=black,fill=white] (8.,-1.8434599306471622) circle (2.5pt);
\draw [color=black,fill=white] (8.339946449253755,-0.8774978778865385) circle (2.5pt);
\draw [color=black,fill=white] (8.,-1.3006610325243906) circle (2.5pt);
\draw [color=black,fill=white] (7.660053550746245,-0.8774978778865385) circle (2.5pt);
\draw [color=black,fill=white] (16.,-0.3006610325243906) circle (2.5pt);
\draw [color=black,fill=white] (16.,-0.8434599306471622) circle (2.5pt);
\draw [color=black,fill=white] (16.,0.24213786559838102) circle (2.5pt);
\draw [color=black,fill=white] (16.,0.7849367637211526) circle (2.5pt);
\draw [color=black,fill=white] (3.0599067068911694,-0.958748482082656) circle (2.5pt);
\draw [color=black,fill=white] (3.0599067068911694,-1.5015473802054276) circle (2.5pt);
\draw [color=black,fill=white] (3.0599067068911694,-0.41594958395988435) circle (2.5pt);
\draw [color=black,fill=white] (3.0599067068911694,0.12684931416288725) circle (2.5pt);
\draw [color=black,fill=white] (6.408930546239361,-4.880974876906935) circle (2.5pt);
\draw [color=black,fill=white] (6.408930546239361,-5.423773775029706) circle (2.5pt);
\draw [color=black,fill=white] (6.408930546239361,-4.338175978784164) circle (2.5pt);
\draw [color=black,fill=white] (6.408930546239361,-3.7953770806613925) circle (2.5pt);
\draw [color=black,fill=white] (13.004695151957726,-4.424626501910364) circle (2.5pt);
\draw [color=black,fill=white] (13.684588050465237,-4.424626501910364) circle (2.5pt);
\draw [color=black,fill=white] (13.344641601211482,-4.8477896565482155) circle (2.5pt);
\draw [color=black,fill=white] (13.344641601211482,-5.3905885546709875) circle (2.5pt);
\draw [color=black,fill=white] (2.9456332546547976,1.898087823826668) circle (2.5pt);
\draw [color=black,fill=white] (2.9456332546547976,1.3552889257038965) circle (2.5pt);
\draw [color=black,fill=white] (2.9456332546547976,2.4408867219494397) circle (2.5pt);
\draw [color=black,fill=white] (2.9456332546547976,2.9836856200722113) circle (2.5pt);
\end{scriptsize}
\draw (1.9314563660569894,2.4844324869419907) node[anchor=north west] {$3$};
\draw (14.98719828406257,-2.7150095898129805) node[anchor=north west] {$1$};
\draw (14.98719828406257,3.2843466525966014) node[anchor=north west] {$0$};
\draw (14.98719828406257,0.2846685313918107) node[anchor=north west] {$\moon\!\!_{0,1}$};
\draw (12.330340519566905,-4.257701195004016) node[anchor=north west] {$\moon\!\!_{0}$};
\draw (6.1452899172732325,4.398512811901238) node[anchor=north west] {$\moon\!\!_{0}$};
\draw (2.045729818293362,-0.372403818967334) node[anchor=north west] {$\moon\!\!_{0}$};
\draw (2.3742659934729335,-3.3863661217016716) node[anchor=north west] {$1$};
\draw (5.388228296207264,-4.300553739592655) node[anchor=north west] {$3$};
\draw (8.987842041653003+0.55,2.284453945528338+0.6) node[anchor=north west] {$\nym_{0}$};
\draw (6.988056627516481,-0.7152241756764529) node[anchor=north west] {$2$};
\draw (10.987627455789525+0.55,-0.7152241756764529+0.6) node[anchor=north west] {$\nym_{0}$};
\draw (8.987842041653003+0.55,-3.714902296881244+0.6) node[anchor=north west] {$\nym_{0}$};
\draw (12.6,2.3) node[anchor=north west] {$\mathbb{Z}^{+}$};

\draw [shift={(12.542807531653969,-0.3404158618672925)},line width=1.pt]  plot[domain=1.4821004386769168:3.035127634390425,variable=\t]({1.*4.825969297260067*cos(\t r)+0.*4.825969297260067*sin(\t r)},{0.*4.825969297260067*cos(\t r)+1.*4.825969297260067*sin(\t r)});
\draw [shift={(12.542807531654004,-0.3404158618671967)},line width=1.pt]  plot[domain=1.0179753179849524:1.3630962308936114,variable=\t]({1.*4.825969297259968*cos(\t r)+0.*4.825969297259968*sin(\t r)},{0.*4.825969297259968*cos(\t r)+1.*4.825969297259968*sin(\t r)});
\draw [->,line width=1.pt] (12.970290151493145,4.466583021826388) -- (13.537970531055228,4.381832571106088);
\end{tikzpicture}
}
\end{center}
\vspace{-0.5cm}
    \caption{``Full Glasses''.}
    \label{fig:fig15}
\end{figure}
\clearpage
To find the solution to the problem shown in Figure \ref{fig:fig11}, it is still necessary to analyze ``full glasses'' with four edges. For this purpose, a game digraph is shown in Figure \ref{fig:fig15}. It is worth noting that there is a cyclic zone with three nodes, all of them having Grundy value $\nym_0$, i.e., a nymphet.

In the disjunctive sum under analysis, the Grundy values of the components are, from left to right, $\nym_0$, $2$, $\infty_{0,2}$, and $\moon_{\!\!\!0}$. By applying Theorem \ref{th:cycliccarrysum}, the Grundy value of the sum is $\infty_{0,2}$, and therefore the next player wins by moving out of the cyclic zone to $0$. This can be done by removing the green edges at the bottom of the leftmost and rightmost components. Since these moves are carry-on moves, the nymphet and the moon are removed from the game, while the player retains the right to move. Next, it is enough to remove any of the top edges of the third component. This ends the cyclic situation, obtaining $*2 + *2 = 0$ and thus winning the game. This is the solution to the problem ``served'' by the bartender.

\section{Final remarks}
\label{sec:final}

In conclusion, the main features of the unified theory proposed here are summarized in Figure \ref{fig:fig16}. This figure shows, side by side, how SFPT and LNST intersect, as well as how the generalization underlying cycles and carry-on moves works. In the game digraphs shown, the arrows are moves that force the opponent to play in the same component.

Starting from the left side of the figure, although independent, ordinary {\sc nim} values play a central role in both theories. What differs in LNST concerns forms such as $G=\{\infty\}$, $G'=\{0,\circlearrowright^{0}\}$ or $G''=\{\circlearrowright^{0},\circlearrowright^{*}\}$. In the first case, the next player has a ``checkmate'' available, namely a move to a terminal gray node. In the second, the next player can either move to zero or make a carry-on move to zero, retaining the right to play. In the third, the next player has carry-on moves to distinct {\sc nim} values. In LNST, $G+X\in\mathcal{N}$, $G'+X\in\mathcal{N}$, and $G''+X\in\mathcal{N}$ hold for any impartial game $X$. These outcomes are explained differently: in the first case, the next player delivers checkmate in $G$; in the second case, in $G'$, the next player either moves to zero or makes a carry-on move to zero, depending on whether $X\in\mathcal{P}$ or $X\in\mathcal{N}$; in the third case, the next player stabilizes the second component at a {\sc nim} value (using forcing sequences if necessary) and then makes a carry-on move in the first component to a different {\sc nim} value. Due to these game situations, in LNST, $G$, $G'$, and $G''$ are \emph{indistinguishable} and, as such, belong to the same equivalence class. Consequently, besides the {\sc nim} values, there is a single lunar value, denoted by $\moon\!\!$. Nevertheless, for the analysis of Figure~\ref{fig:fig16}, it is important to recognize that these represent distinct game situations.

SFPT already accounts for cyclic zones. A node within a cyclic zone always belongs to the equivalence class of a form of the type $H=\{H,*n_1,\ldots,*n_k\}$. In these forms, the fact that $H$ is an option of itself indicates the presence of a {\sc pass} move; for this reason, in the literature, a form of this type is often represented as $H=\{\text{{\sc pass}},*n_1,\ldots,*n_k\}$.

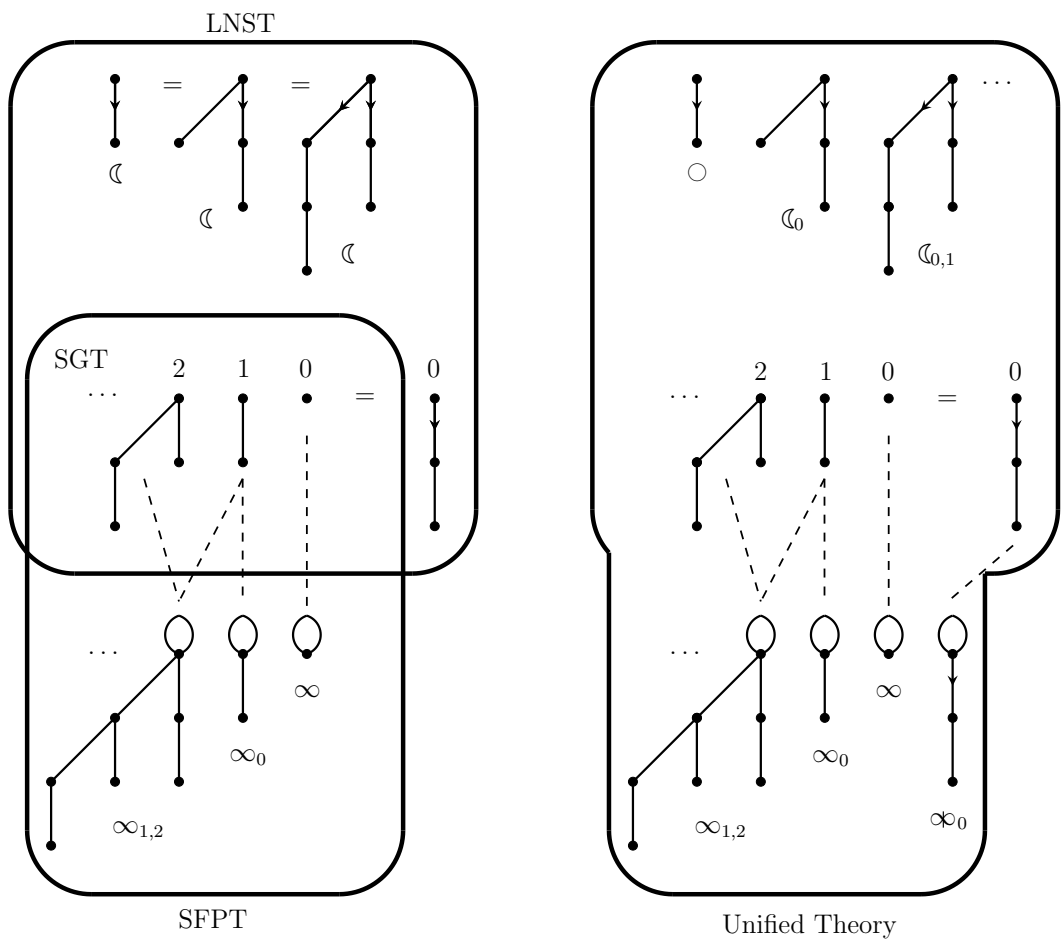
\begin{figure}[htb!]
\scalebox{0.85}{
\begin{tikzpicture}
\clip(2.2,-1) rectangle (19.,14.8);
\draw [->, midway,line width=1.pt] (9.,8.) -- (9.,7.);
\draw [line width=1.pt] (9.,7.)-- (9.,6.);
\draw [line width=1.pt] (6.,8.)-- (6.,7.);
\draw [line width=1.pt] (5.,8.)-- (5.,7.);
\draw [line width=1.pt] (5.,8.)-- (4.,7.);
\draw [line width=1.pt] (4.,7.)-- (4.,6.);
\draw [shift={(8.647639510300412,6.258147401543826)},line width=2pt]  plot[domain=-1.5707963267948966:0.,variable=\t]({1.*1.*cos(\t r)+0.*1.*sin(\t r)},{0.*1.*cos(\t r)+1.*1.*sin(\t r)});
\draw [shift={(8.647639510300412,12.575075470770422)},line width=2pt]  plot[domain=0.:1.5707963267948966,variable=\t]({1.*1.*cos(\t r)+0.*1.*sin(\t r)},{0.*1.*cos(\t r)+1.*1.*sin(\t r)});
\draw [line width=2pt] (9.647639510300412,6.258147401543826)-- (9.647639510300412,12.575075470770422);
\draw [line width=2pt] (2.3653247455627158,6.258147401543827)-- (2.365324745562716,12.575075470770422);
\draw [shift={(3.365324745562716,12.575075470770422)},line width=2pt]  plot[domain=0.:1.5707963267948966,variable=\t]({-1.*1.*cos(\t r)+0.*1.*sin(\t r)},{0.*1.*cos(\t r)+1.*1.*sin(\t r)});
\draw [shift={(3.3653247455627158,6.258147401543826)},line width=2pt]  plot[domain=-1.5707963267948966:0.,variable=\t]({-1.*1.*cos(\t r)+0.*1.*sin(\t r)},{0.*1.*cos(\t r)+1.*1.*sin(\t r)});
\draw [line width=2pt] (3.365324745562715,13.575075470770422)-- (8.647639510300412,13.575075470770422);
\draw [line width=2pt] (8.647639510300412,5.258147401543826)-- (3.3653247455627153,5.258147401543826);
\draw [shift={(7.5054004621662855,1.2392182506514626)},line width=2pt]  plot[domain=-1.5707963267948966:0.,variable=\t]({1.*1.*cos(\t r)+0.*1.*sin(\t r)},{0.*1.*cos(\t r)+1.*1.*sin(\t r)});
\draw [shift={(7.5054004621662855,8.300332366389684)},line width=2pt]  plot[domain=0.:1.5707963267948966,variable=\t]({1.*1.*cos(\t r)+0.*1.*sin(\t r)},{0.*1.*cos(\t r)+1.*1.*sin(\t r)});
\draw [line width=2pt] (2.6249245292295615,1.239218250651463)-- (2.6249245292295624,8.300332366389684);
\draw [shift={(3.6249245292295624,8.300332366389684)},line width=2pt]  plot[domain=0.:1.5707963267948966,variable=\t]({-1.*1.*cos(\t r)+0.*1.*sin(\t r)},{0.*1.*cos(\t r)+1.*1.*sin(\t r)});
\draw [shift={(3.6249245292295615,1.2392182506514628)},line width=2pt]  plot[domain=-1.5707963267948966:0.,variable=\t]({-1.*1.*cos(\t r)+0.*1.*sin(\t r)},{0.*1.*cos(\t r)+1.*1.*sin(\t r)});
\draw [line width=2pt] (3.624924529229562,9.300332366389684)-- (7.5054004621662855,9.300332366389684);
\draw [line width=2pt] (7.5054004621662855,0.23921825065146263)-- (3.6249245292295615,0.23921825065146288);
\draw [line width=2pt] (8.505400462166286,1.2392182506514626)-- (8.505400462166286,8.300332366389684);
\draw [->, midway,line width=1.pt] (4.,13.) -- (4.,12.);
\draw [shift={(7.1,4.3)},line width=1.pt]  plot[domain=1.892546881191538:4.390638425988048,variable=\t]({1.*0.3162277660168376*cos(\t r)+0.*0.3162277660168376*sin(\t r)},{0.*0.3162277660168376*cos(\t r)+1.*0.3162277660168376*sin(\t r)});
\draw [shift={(6.9,4.3)},line width=1.pt]  plot[domain=-1.249045772398258:1.249045772398258,variable=\t]({1.*0.3162277660168374*cos(\t r)+0.*0.3162277660168374*sin(\t r)},{0.*0.3162277660168374*cos(\t r)+1.*0.3162277660168374*sin(\t r)});
\draw [line width=0.8pt,dash pattern=on 4pt off 4pt] (7.,4.75247)-- (7.,7.41769);
\draw [line width=1.pt] (6.,4.)-- (6.,3.);
\draw [shift={(5.9,4.3)},line width=1.pt]  plot[domain=-1.2490457723982553:1.2490457723982553,variable=\t]({1.*0.3162277660168376*cos(\t r)+0.*0.3162277660168376*sin(\t r)},{0.*0.3162277660168376*cos(\t r)+1.*0.3162277660168376*sin(\t r)});
\draw [shift={(6.1,4.3)},line width=1.pt]  plot[domain=1.8925468811915354:4.390638425988051,variable=\t]({1.*0.3162277660168374*cos(\t r)+0.*0.3162277660168374*sin(\t r)},{0.*0.3162277660168374*cos(\t r)+1.*0.3162277660168374*sin(\t r)});
\draw [line width=0.8pt,dash pattern=on 4pt off 4pt] (5.995935886720087,6.7427336643886076)-- (5.995935886720087,4.821695265253944);
\draw [shift={(4.9,4.3)},line width=1.pt]  plot[domain=-1.249045772398258:1.249045772398258,variable=\t]({1.*0.3162277660168374*cos(\t r)+0.*0.3162277660168374*sin(\t r)},{0.*0.3162277660168374*cos(\t r)+1.*0.3162277660168374*sin(\t r)});
\draw [shift={(5.1,4.3)},line width=1.pt]  plot[domain=1.8925468811915407:4.390638425988046,variable=\t]({1.*0.31622776601683794*cos(\t r)+0.*0.31622776601683794*sin(\t r)},{0.*0.31622776601683794*cos(\t r)+1.*0.31622776601683794*sin(\t r)});
\draw [line width=1.pt] (5.,4.)-- (5.,3.);
\draw [line width=1.pt] (5.,4.)-- (4.,3.);
\draw [line width=1.pt] (4.,3.)-- (4.,2.);
\draw [line width=0.8pt,dash pattern=on 4pt off 4pt] (5.995935886720087,6.7427336643886076)-- (5.,4.821695265253944);
\draw [line width=1.pt] (4.,3.)-- (3.,2.);
\draw [line width=1.pt] (3.,2.)-- (3.,1.);
\draw [line width=1.pt] (5.,3.)-- (5.,2.);
\draw [line width=0.8pt,dash pattern=on 4pt off 4pt] (4.455643836963446,6.7427336643886076)-- (5.,4.821695265253944);
\draw [line width=1.pt] (6.,12.)-- (6.,11.);
\draw [line width=1.pt] (5.,12.)-- (6.,13.);
\draw [->, midway,line width=1.pt] (6.,13.) -- (6.,12.);
\draw [->, midway,line width=1.pt] (8.,13.) -- (7.,12.);
\draw [->, midway,line width=1.pt] (8.,13.) -- (8.,12.);
\draw [line width=1.pt] (7.,12.)-- (7.,11.);
\draw [line width=1.pt] (7.,11.)-- (7.,10.);
\draw [line width=1.pt] (8.,12.)-- (8.,11.);
\draw [shift={(17.747639510300413,12.575075470770422)},line width=2pt]  plot[domain=0.:1.5707963267948966,variable=\t]({1.*1.*cos(\t r)+0.*1.*sin(\t r)},{0.*1.*cos(\t r)+1.*1.*sin(\t r)});
\draw [shift={(12.465324745562718,12.575075470770422)},line width=2pt]  plot[domain=0.:1.5707963267948966,variable=\t]({-1.*1.*cos(\t r)+0.*1.*sin(\t r)},{0.*1.*cos(\t r)+1.*1.*sin(\t r)});
\draw [shift={(16.605400462166287,1.2392182506514626)},line width=2pt]  plot[domain=-1.5707963267948966:0.,variable=\t]({1.*1.*cos(\t r)+0.*1.*sin(\t r)},{0.*1.*cos(\t r)+1.*1.*sin(\t r)});
\draw [shift={(12.724924529229563,1.2392182506514628)},line width=2pt]  plot[domain=-1.5707963267948966:0.,variable=\t]({-1.*1.*cos(\t r)+0.*1.*sin(\t r)},{0.*1.*cos(\t r)+1.*1.*sin(\t r)});
\draw [shift={(16.2,4.3)},line width=1.pt]  plot[domain=1.8925468811915318:4.390638425988053,variable=\t]({1.*0.3162277660168362*cos(\t r)+0.*0.3162277660168362*sin(\t r)},{0.*0.3162277660168362*cos(\t r)+1.*0.3162277660168362*sin(\t r)});
\draw [shift={(16.,4.3)},line width=1.pt]  plot[domain=-1.2490457723982438:1.2490457723982455,variable=\t]({1.*0.3162277660168379*cos(\t r)+0.*0.3162277660168379*sin(\t r)},{0.*0.3162277660168379*cos(\t r)+1.*0.3162277660168379*sin(\t r)});
\draw [shift={(15.2,4.3)},line width=1.pt]  plot[domain=1.8925468811915354:4.390638425988051,variable=\t]({1.*0.3162277660168374*cos(\t r)+0.*0.3162277660168374*sin(\t r)},{0.*0.3162277660168374*cos(\t r)+1.*0.3162277660168374*sin(\t r)});
\draw [shift={(15.,4.3)},line width=1.pt]  plot[domain=-1.2490457723982553:1.2490457723982553,variable=\t]({1.*0.3162277660168376*cos(\t r)+0.*0.3162277660168376*sin(\t r)},{0.*0.3162277660168376*cos(\t r)+1.*0.3162277660168376*sin(\t r)});
\draw [shift={(14.2,4.3)},line width=1.pt]  plot[domain=1.8925468811915407:4.390638425988046,variable=\t]({1.*0.31622776601683794*cos(\t r)+0.*0.31622776601683794*sin(\t r)},{0.*0.31622776601683794*cos(\t r)+1.*0.31622776601683794*sin(\t r)});
\draw [shift={(14.,4.3)},line width=1.pt]  plot[domain=-1.249045772398258:1.249045772398258,variable=\t]({1.*0.3162277660168374*cos(\t r)+0.*0.3162277660168374*sin(\t r)},{0.*0.3162277660168374*cos(\t r)+1.*0.3162277660168374*sin(\t r)});
\draw [shift={(17.2,4.3)},line width=1.pt]  plot[domain=1.892546881191538:4.390638425988048,variable=\t]({1.*0.3162277660168376*cos(\t r)+0.*0.3162277660168376*sin(\t r)},{0.*0.3162277660168376*cos(\t r)+1.*0.3162277660168376*sin(\t r)});
\draw [shift={(17.,4.3)},line width=1.pt]  plot[domain=-1.2490457723982598:1.2490457723982615,variable=\t]({1.*0.3162277660168362*cos(\t r)+0.*0.3162277660168362*sin(\t r)},{0.*0.3162277660168362*cos(\t r)+1.*0.3162277660168362*sin(\t r)});
\draw [line width=1.pt] (18.1,7.)-- (18.1,6.);
\draw [line width=1.pt] (15.1,8.)-- (15.1,7.);
\draw [line width=1.pt] (14.1,8.)-- (14.1,7.);
\draw [line width=1.pt] (14.1,8.)-- (13.1,7.);
\draw [line width=1.pt] (13.1,7.)-- (13.1,6.);
\draw [line width=2pt] (18.747639510300413,6.258147401543826)-- (18.747639510300413,12.575075470770422);
\draw [line width=2pt] (11.465324745562718,6.258147401543827)-- (11.465324745562718,12.575075470770422);
\draw [line width=2pt] (12.465324745562716,13.575075470770422)-- (17.747639510300413,13.575075470770422);
\draw [line width=2pt] (16.605400462166287,0.23921825065146263)-- (12.724924529229563,0.23921825065146288);
\draw [line width=0.8pt,dash pattern=on 4pt off 4pt] (16.1,4.75247)-- (16.1,7.41769);
\draw [line width=1.pt] (15.1,4.)-- (15.1,3.);
\draw [line width=0.8pt,dash pattern=on 4pt off 4pt] (15.095935886720088,6.7427336643886076)-- (15.09593588672009,4.821695265253944);
\draw [line width=1.pt] (14.1,4.)-- (14.1,3.);
\draw [line width=1.pt] (14.1,4.)-- (13.1,3.);
\draw [line width=1.pt] (13.1,3.)-- (13.1,2.);
\draw [line width=0.8pt,dash pattern=on 4pt off 4pt] (15.095935886720088,6.7427336643886076)-- (14.1,4.821695265253944);
\draw [line width=1.pt] (13.1,3.)-- (12.1,2.);
\draw [line width=1.pt] (12.1,2.)-- (12.1,1.);
\draw [line width=1.pt] (14.1,3.)-- (14.1,2.);
\draw [line width=0.8pt,dash pattern=on 4pt off 4pt] (13.555643836963448,6.7427336643886076)-- (14.1,4.821695265253944);
\draw [line width=1.pt] (15.1,12.)-- (15.1,11.);
\draw [line width=1.pt] (14.1,12.)-- (15.1,13.);
\draw [line width=1.pt] (16.1,12.)-- (16.1,11.);
\draw [line width=1.pt] (16.1,11.)-- (16.1,10.);
\draw [line width=1.pt] (17.1,12.)-- (17.1,11.);
\draw [line width=1.pt] (17.1,3.)-- (17.1,2.);
\draw [line width=0.8pt,dash pattern=on 4pt off 4pt] (17.086200894832594,4.873615221987312)-- (18.072680072766612,5.721641181965677);
\draw [->, midway,line width=1.pt] (9.,8.) -- (9.,7.);
\draw [->, midway,line width=1.pt] (4.,13.) -- (4.,12.);
\draw [->, midway,line width=1.pt] (6.,13.) -- (6.,12.);
\draw [->, midway,line width=1.pt] (8.,13.) -- (7.,12.);
\draw [->, midway,line width=1.pt] (8.,13.) -- (8.,12.);
\draw [line width=2pt] (17.747639510300413,5.258147401543826)-- (17.605400462166287,5.258147401543825);
\draw [shift={(17.747639510300413,6.2581474015438285)},line width=2pt]  plot[domain=-1.5707963267948966:0.,variable=\t]({1.*1.*cos(\t r)+0.*1.*sin(\t r)},{0.*1.*cos(\t r)+1.*1.*sin(\t r)});
\draw [line width=2pt] (17.605400462166287,5.258147401543825)-- (17.605400462166287,1.2392182506514626);
\draw [->, midway,line width=1.pt] (18.1,8.) -- (18.1,7.);
\draw [->, midway,line width=1.pt] (17.1,13.) -- (16.1,12.);
\draw [->, midway,line width=1.pt] (17.1,13.) -- (17.1,12.);
\draw [->, midway,line width=1.pt] (15.1,13.) -- (15.1,12.);
\draw [->, midway,line width=1.pt] (13.1,13.) -- (13.1,12.);
\draw [line width=2pt] (11.724924529229567,5.5859811128519485)-- (11.724924529229563,1.239218250651463);
\draw [shift={(12.465324745562679,6.258147401543817)},line width=2pt]  plot[domain=3.1415926535897833:3.8787234044495214,variable=\t]({1.*1.*cos(\t r)+0.*1.*sin(\t r)},{0.*1.*cos(\t r)+1.*1.*sin(\t r)});
\draw [->, midway,line width=1.pt] (17.1,4.) -- (17.1,3.);
\begin{scriptsize}
\draw [fill=black] (7.,8.) circle (2pt);
\draw [fill=black] (9.,8.) circle (2pt);
\draw [fill=black] (9.,7.) circle (2pt);
\draw [fill=black] (9.,6.) circle (2pt);
\draw [fill=black] (6.,8.) circle (2pt);
\draw [fill=black] (6.,7.) circle (2pt);
\draw [fill=black] (5.,8.) circle (2pt);
\draw [fill=black] (5.,7.) circle (2pt);
\draw [fill=black] (4.,7.) circle (2pt);
\draw [fill=black] (4.,6.) circle (2pt);
\draw [fill=black] (4.,13.) circle (2pt);
\draw [fill=black] (4.,12.) circle (2pt);
\draw [fill=black] (7.,4.) circle (2pt);
\draw [fill=black] (6.,4.) circle (2pt);
\draw [fill=black] (6.,3.) circle (2pt);
\draw [fill=black] (5.,4.) circle (2pt);
\draw [fill=black] (5.,3.) circle (2pt);
\draw [fill=black] (4.,3.) circle (2pt);
\draw [fill=black] (4.,2.) circle (2pt);
\draw [fill=black] (3.,2.) circle (2pt);
\draw [fill=black] (3.,1.) circle (2pt);
\draw [fill=black] (5.,2.) circle (2pt);
\draw [fill=black] (6.,12.) circle (2pt);
\draw [fill=black] (6.,11.) circle (2pt);
\draw [fill=black] (6.,13.) circle (2pt);
\draw [fill=black] (5.,12.) circle (2pt);
\draw [fill=black] (8.,13.) circle (2pt);
\draw [fill=black] (7.,12.) circle (2pt);
\draw [fill=black] (7.,11.) circle (2pt);
\draw [fill=black] (7.,10.) circle (2pt);
\draw [fill=black] (8.,12.) circle (2pt);
\draw [fill=black] (8.,11.) circle (2pt);
\draw [fill=black] (16.1,4.) circle (2pt);
\draw [fill=black] (18.1,7.) circle (2pt);
\draw [fill=black] (18.1,6.) circle (2pt);
\draw [fill=black] (15.1,8.) circle (2pt);
\draw [fill=black] (15.1,7.) circle (2pt);
\draw [fill=black] (14.1,8.) circle (2pt);
\draw [fill=black] (14.1,7.) circle (2pt);
\draw [fill=black] (14.1,8.) circle (2pt);
\draw [fill=black] (13.1,7.) circle (2pt);
\draw [fill=black] (13.1,7.) circle (2pt);
\draw [fill=black] (13.1,6.) circle (2pt);
\draw [fill=black] (15.1,4.) circle (2pt);
\draw [fill=black] (15.1,3.) circle (2pt);
\draw [fill=black] (14.1,4.) circle (2pt);
\draw [fill=black] (13.1,3.) circle (2pt);
\draw [fill=black] (13.1,2.) circle (2pt);
\draw [fill=black] (13.1,3.) circle (2pt);
\draw [fill=black] (12.1,2.) circle (2pt);
\draw [fill=black] (12.1,1.) circle (2pt);
\draw [fill=black] (14.1,3.) circle (2pt);
\draw [fill=black] (14.1,2.) circle (2pt);
\draw [fill=black] (15.1,12.) circle (2pt);
\draw [fill=black] (15.1,11.) circle (2pt);
\draw [fill=black] (14.1,12.) circle (2pt);
\draw [fill=black] (15.1,13.) circle (2pt);
\draw [fill=black] (16.1,12.) circle (2pt);
\draw [fill=black] (16.1,11.) circle (2pt);
\draw [fill=black] (16.1,11.) circle (2pt);
\draw [fill=black] (16.1,10.) circle (2pt);
\draw [fill=black] (17.1,12.) circle (2pt);
\draw [fill=black] (17.1,11.) circle (2pt);
\draw [fill=black] (17.1,3.) circle (2pt);
\draw [fill=black] (17.1,2.) circle (2pt);
\draw [fill=black] (16.1,8.) circle (2pt);
\draw [fill=black] (18.1,8.) circle (2pt);
\draw [fill=black] (13.1,13.) circle (2pt);
\draw [fill=black] (13.1,12.) circle (2pt);
\draw [fill=black] (17.1,13.) circle (2pt);
\draw [fill=black] (17.1,4.) circle (2pt);
\end{scriptsize}

\draw (3.747937355940069,11.824025247167047) node[anchor=north west] {$\moon$};
\draw (12.8,11.824025247167047) node[anchor=north west] {$\fullmoon$};

\draw (5.158540421511043,11.11872371438156) node[anchor=north west] {$\moon$};
\draw (14.263342026560062,11.11872371438156) node[anchor=north west] {$\moon_{\!\!\!0}$};

\draw (7.381308888471367,10.563031597641478) node[anchor=north west] {$\moon$};
\draw (16.4,10.563031597641478) node[anchor=north west] {$\moon_{\!\!\!0,1}$};

\draw (4.6,13.1) node[anchor=north west] {$=$};
\draw (4.6+2,13.1) node[anchor=north west] {$=$};
\draw (17.408848582438882,13.1) node[anchor=north west] {$\ldots$};

\draw (4.7527599611804625,8.748302311023958) node[anchor=north west] {$2$};
\draw (5.763629659364042,8.748302311023958) node[anchor=north west] {$1$};
\draw (6.734064569620279,8.748302311023958) node[anchor=north west] {$0$};
\draw (8.735586572023767,8.748302311023958) node[anchor=north west] {$0$};
\draw (3.4184119595781373,8.222650067968496) node[anchor=north west] {$\ldots$};
\draw (7.603412510058157,8.222650067968496) node[anchor=north west] {$=$};

\draw (12.520583639735026,8.188836707629266) node[anchor=north west] {$\ldots$};
\draw (13.853950876536391,8.708330436253174) node[anchor=north west] {$2$};
\draw (14.875621876163411,8.708330436253174) node[anchor=north west] {$1$};
\draw (15.84534350292804,8.708330436253174) node[anchor=north west] {$0$};
\draw (16.71116638396789,8.188836707629266) node[anchor=north west] {$=$};
\draw (17.83673612931969,8.708330436253174) node[anchor=north west] {$0$};

\draw (3.810405456474163,1.5220005236224352) node[anchor=north west] {$\infty_{1,2}$};
\draw (5.645949964278639,2.647570268974238) node[anchor=north west] {$\infty_{0}$};
\draw (6.652528896775988,3.647805867286225) node[anchor=north west] {$\infty$};
\draw (3.4313862108911586,4.178763452871637) node[anchor=north west] {$\ldots$};
\draw (12.917828406683844,1.5239755249445763) node[anchor=north west] {$\infty_{1,2}$};
\draw (14.758481370046603,2.6389864546739417) node[anchor=north west] {$\infty_{0}$};
\draw (15.749602196472704,3.647805867286225) node[anchor=north west] {$\infty$};
\draw (12.528459510587876,4.178763452871637) node[anchor=north west] {$\ldots$};
\draw (16.593828076542962,1.6805633521632894) node[anchor=north west] {$\nym_0$};

\draw (2.9040124924278277,8.902244310010737) node[anchor=north west] {SGT};
\draw (4.847535600590422,0.13614529090236482) node[anchor=north west] {SFPT};
\draw (5.27268128050099,14.1659527279511) node[anchor=north west] {LNST};
\draw (13.350449198801774,0.07541019377228372) node[anchor=north west] {Unified Theory};

\end{tikzpicture}}
\vspace{-1cm}
    \caption{Unified theory.}
    \label{fig:fig16}
\end{figure}

In SFPT, a component within a cyclic zone behaves as a situation in which the next player can either pass or exit by moving to a {\sc nim} value. If no such exit exists, the game is simply $H=\{H\}$ (or $H=\{\text{{\sc pass}}\}$). Consequently, the game digraphs of these forms consist of a root with a loop and possible exits to digraphs corresponding to ordinary {\sc nim} values.

The right side of the figure already pertains to the unified theory proposed here. There are now two fundamental ideas to highlight. The first is that there is no longer a single lunar value. Game forms such as $G$, $G'$, and $G''$ no longer belong to the same equivalence class. Given the possibility of checkmate, we still have $G+X\in\mathcal{N}$, and the Grundy value of
$G$ is now full moon, denoted by $\fullmoon$. However, if $X$ is in a cyclic zone, it may happen that we have $G'+X\in\mathcal{D}$. For example, this occurs whenever the Grundy value of $X$ is $\infty_1$; in this case the carry-on move in $G'$ does not allow a safe exit from the cyclic zone. Hence $X$ suffices to distinguish $G$ from $G'$. On the other hand, $G''+X\in\mathcal{N}$, since the next player can force the opponent to move to $*$ in $G''$, and then exit the cyclic zone victoriously to $*+*$. Therefore, $G''$ does not belong to the same equivalence class as $G'$. All of this occurs because the way a moon is ``neutralized'' before leaving a cyclic zone depends on the forcing sequences available there. This explains why the lunar equivalence class in LNST ``splits'' in the unified theory.

The second idea concerns game forms such as $*n$ and $\{\circlearrowright^{*n}\}$. In both theories, we have $*n=\{\circlearrowright^{*n}\}$; the clearest example is  the pair $0$ and $\{\circlearrowright^{0}\}$ where both are $\mathcal{P}$-positions. When comparing $*n + X$ with $\{\circlearrowright^{*n}\} + X$, making or not the carry-on in $\{\circlearrowright^{*n}\}$ before playing in $X$ is irrelevant. At any moment the opponent can do it, leaving everything the same.

What is particularly interesting happens when one examines the games $J=\{J,0,*,\ldots,*(n-1)\}$ and $J'=\{J',\circlearrowright^{*n}\}$. By adding a loop at the root, the options of $*n$ and $\{\circlearrowright^{*n}\}$ have now become exits from a cyclic zone. And, when the {\sc pass} move exists, leaving the cyclic zone \emph{while keeping the right to play} and leaving \emph{without keeping the right to play} are completely different. For example, $J+*(n+1)\in\mathcal{D}$, but $J'+*(n+1)\in\mathcal{N}$. In the second case, the next player makes the carry-on move in the first component, \emph{keeping the right to play}, and then plays in the second component, achieving $*n+*n$. Moreover, sometimes the only way to draw the game is to use the {\sc pass} move. For example, the only reason why
$J'$ is not equal to $*n$ is the fact that in $J'+*n$ the next player can draw by \emph{passing} in the first component. Thus, the equivalence class of $J'$ is new with respect to SFLT, and its Grundy value is denoted by $\nym_{*n}$ (nymphet). In sum, although $*n$ and $\{\circlearrowright^{*n}\}$ are equivalent, the cyclical situations created by the inclusion of the {\sc pass} move in these game forms are not, giving rise to a new type of cyclic value.

\bibliographystyle{bibstyle}
\bibliography{bib}

\end{document}